\documentclass[a4paper,11pt,reqno]{amsart}

\usepackage[T1]{fontenc}
\usepackage{lmodern}
\usepackage{amsmath,amssymb,amsthm,mathtools,mathrsfs}
\usepackage{geometry,hyperref,xcolor,microtype}

\allowdisplaybreaks

\definecolor{DarkBlue}{rgb}{0.1,0.1,0.55}
\definecolor{DarkRed}{rgb}{0.55,0.1,0.1}

\hypersetup{colorlinks=true,linkcolor=DarkBlue,citecolor=DarkRed,urlcolor=DarkBlue}

\numberwithin{equation}{section}
\theoremstyle{plain}
\newtheorem{theorem}{Theorem}[section]
\newtheorem{conjecture}[theorem]{Conjecture}
\newtheorem{proposition}[theorem]{Proposition}
\newtheorem{lemma}[theorem]{Lemma}

\newtheorem{remark}[theorem]{Remark}
\theoremstyle{definition}

\newcommand{\Sph}{\mathbb{S}}
\newcommand{\CP}{\mathbb{C}P}
\newcommand{\C}{\mathbb{C}}
\newcommand{\R}{\mathbb{R}}
\newcommand{\Nzero}{\mathbb{N}_0}
\newcommand{\dd}{\,d}
\newcommand{\rd}{\mathrm{rd}}
\newcommand{\Dom}{\operatorname{Dom}}
\newcommand{\Spec}{\operatorname{Spec}}
\newcommand{\Ran}{\operatorname{Range}}
\newcommand{\Id}{\operatorname{Id}}
\newcommand{\sourcecite}[2]{\cite[#2]{#1}}

\title[The Simon conjecture and curvature rigidities]{A proof of the Simon conjecture and curvature rigidities}

\author[Weiran Ding]{Weiran Ding$^{1}$}
\address{$^{1}$School of Mathematical Sciences, South China Normal University, Guangzhou 510000, P. R. China.}
\email{dingwr0806@m.scnu.edu.cn}

\author[Jianquan Ge]{Jianquan Ge$^{2}$}
\address{$^{2}$School of Mathematical Sciences, Laboratory of Mathematics and Complex Systems, Beijing Normal University, Beijing 100875, P. R. China.}
\email{jqge@bnu.edu.cn}

\author[Fagui Li]{Fagui Li$^{3,*}$}
\address{$^{3,*}$Frontier Interdisciplinary Domain, Beijing Institute of Technology, Zhuhai, Guangdong 519088, P. R. China.}
\email{lifagui@bitzh.edu.cn}

\subjclass[2020]{53C42, 58J50, 53A10}
\date{}
\keywords{Simon's conjecture, minimal surface, Laplace spectrum, parallel mean curvature vector.}
\thanks{* Corresponding author.}

\begin{document}

\begin{abstract}
In this paper, we prove all gaps in the Simon conjecture for connected closed minimal surfaces immersed in unit spheres. Our proof uses the line-bundle ladder of Lin, Wang, and Xu and a Dolbeault computation to obtain two-sided ordered spectral comparisons on the $2$-sphere under Gaussian curvature $K_g\geq1$ and $K_g\leq1$, with rigidity in the equality case. Applying these comparisons at consecutive Calabi scales forces the Gaussian curvature to be one of the two endpoint values. We also establish the corresponding rigidity theorem for closed surfaces with parallel mean curvature vector. Finally, we prove a rotationally symmetric higher-dimensional analogue under a Ricci lower bound and a sectional-curvature upper bound.
\end{abstract}

\maketitle

\section{Introduction}\label{sec:introduction}

The study of rigidity and gap phenomena for minimal surfaces in spheres has a long history. In 1967, E. Calabi \cite{Calabi} classified linearly full minimal $2$-spheres of positive constant Gaussian curvature in a round sphere; do Carmo and Wallach~\cite{doCarmoWallach} later obtained the corresponding classification from a different viewpoint. For the unit ambient sphere, the possible positive constant curvatures are $2/[s(s+1)]$, where $s$ is a positive integer. Motivated by this classification, U. Simon \cite{Simon,Simon1} proposed a quantization problem for closed minimal surfaces in the unit sphere at the 1980 New Orleans Conference on Harmonic Maps. For $s=1,2,3,\cdots$, set
\begin{equation*}\label{eq:levels}
	\begin{aligned}
		K(s)&=\frac{2}{s(s+1)}=\frac{1}{T(s)},\\
		|A|^2(s)&=2-2K(s)=\frac{2(s-1)(s+2)}{s(s+1)}.\\
	\end{aligned}
\end{equation*}
Here $K(s)$ is the $s$-th Calabi curvature, $|A|^2(s)$ is the corresponding value of the squared norm $|A|^2$ of the second fundamental form, and $T(s)=K(s)^{-1}$. Throughout this paper, all immersions are assumed to be smooth and to have connected domains, $S^n$ denotes the smooth $n$-sphere, possibly equipped with a Riemannian metric $g$, whereas $\mathbb S^n(\kappa)$ denotes the round $n$-sphere of constant sectional curvature $\kappa>0$. We write $\mathbb S^n=\mathbb S^n(1)$. The Simon conjecture asks whether a closed linearly full minimal surface satisfying
\begin{equation*}
	K(s+1)\leq K\leq K(s)
\end{equation*}
must have $K\equiv K(s)$ or $K\equiv K(s+1)$. By the Gauss equation, this is equivalent to
\begin{equation*}
	|A|^2(s)\leq |A|^2\leq |A|^2(s+1).
\end{equation*}

\begin{conjecture}[Simon's conjecture]\label{simonconj}
	Let $M$ be a closed surface minimally immersed in  $\mathbb{S}^N$ such that the image is not contained in any hyperplane of $\mathbb{R}^{N+1}$. If $K(s+1)\leq K\leq K(s)$ for an $s\in \mathbb{N}$, then either $K\equiv K(s+1)$ or $K\equiv K(s)$, and thus the lifted immersion on the oriented double cover is one of Calabi's $2$-spheres with the dimension of the ambient space $N=2s+2$ or $N=2s$, respectively.
\end{conjecture}
The Simon conjecture belongs to the broader family of curvature-gap problems for minimal submanifolds in spheres, including the Chern and Lu conjectures; see the surveys \cite{GuXuXu,Yau}. Prior to the present work, the Simon conjecture had been proved only in the cases $s=1$ and $s=2$ \cite{BKSS,Simon}. These results show that
\begin{enumerate}
	\item if $\frac{1}{3}\leq K\leq 1$ then $K\equiv\frac{1}{3}$ or $K\equiv 1$;
	\item if $\frac{1}{6}\leq K\leq\frac{1}{3}$ then $K\equiv\frac{1}{6}$ or $K\equiv\frac{1}{3}$.
\end{enumerate}
Before the present work, the third gap problem in the Simon conjecture remained open, although many partial results were known (cf. \cite{Bolt88a,Bolt88b,It88,LiSimon}) for the case $s\ge3$. In our previous work \cite{DGL}, we gave a unified proof of the first two gaps $(s=1,2)$ by Simons-type integral identities and obtained a quantitative lower bound for the oscillation $\sup K-\inf K$ in the third interval $\frac{1}{10}\leq K\leq\frac{1}{6}$. That estimate gives a genuine separation phenomenon in the interior of the interval without imposing any additional assumptions on the normal bundle, but degenerates at its two endpoints. A subsequent refinement \cite{DingGeLiThird} established rigidity in one-sided neighborhoods of both endpoints, forcing $K\equiv\frac{1}{10}$ near the lower endpoint and $K\equiv\frac{1}{6}$ near the upper endpoint, and also improved the interior oscillation estimate. These results provided partial solutions of the third gap, but did not rule out all nonconstant configurations in the full interval $\frac{1}{10}\leq K\leq\frac{1}{6}$.\par
The argument developed here is different and applies to every positive integer $s$. We use the representation and min-max theorems for closed semibounded forms \cite{Kato}, elliptic regularity and Rellich compactness \cite{TaylorPDE}, unique continuation~\cite{Aronszajn}, uniformization \cite{JostRiemann}, and standard results from Dolbeault theory \cite{Forster,GriffithsHarris,Wells}. Our first result proves Conjecture \ref{simonconj}.
\begin{theorem}\label{thm:main}
	Let $x\colon M^2\to\Sph^N$ be a closed minimal immersion, and let $s$ be a positive integer. Suppose that the Gaussian curvature $K$ of $M$ satisfies $$\frac{2}{(s+1)(s+2)}\leq K\leq\frac{2}{s(s+1)}.$$ Then $$K\equiv\frac{2}{(s+1)(s+2)}\quad\text{or}\quad K\equiv\frac{2}{s(s+1)}.$$ Moreover, the lift of $x$ to the orientation double cover of $M$ is one of Calabi's $2$-spheres.
\end{theorem}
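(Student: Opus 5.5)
We will reduce Theorem~\ref{thm:main} to a spectral comparison on the $2$-sphere. The abstract describes ordered eigenvalue comparisons under $K_g\ge1$ and $K_g\le1$, obtained from the Lin--Wang--Xu line-bundle ladder and a Dolbeault computation.

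\emph{Step 1: reduce to a metric on $S^2$.} Pass to the oriented double cover if needed; it is still minimal with the same curvature bounds. Since $K>0$, Gauss--Bonnet forces $M\cong S^2$. Minimality of $x$ in $\Sph^N$ means the coordinate functions satisfy $\Delta_g x^A=-2x^A$. Hence $2$ is an eigenvalue of $\Delta_g$ with multiplicity at least the dimension $N'+1$ of the linear span of the image.

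\emph{Step 2: rescale to the two Calabi scales.} Set $g_s=K(s)g=g/T(s)$, so that $K_{g_s}=K/K(s)\le1$. Set $g_{s+1}=K(s+1)g$, so that $K_{g_{s+1}}\ge1$. The eigenvalue $2$ of $\Delta_g$ becomes $2T(s)=s(s+1)$ for $\Delta_{g_s}$ and $2T(s+1)=(s+1)(s+2)$ for $\Delta_{g_{s+1}}$. These are exactly the round eigenvalues $\lambda_s$ and $\lambda_{s+1}$ of $\Sph^2$.

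\emph{Step 3: apply the ordered comparisons.} Label the eigenvalues of $\Delta_g$ as $\mu_0=0<\mu_1\le\cdots$, counted with multiplicity. Under $K_g\ge1$ we expect $\mu_j(g)\ge\lambda$ for the round value $\lambda$ with the same index, with equality only for the round metric. Under $K_g\le1$ the inequality reverses, with the same rigidity. The proof of these comparisons works on each rung $L^{k}\to S^2$ of the line-bundle ladder. One uses the Weitzenböck/Bochner formula $\bar\partial^*\bar\partial-\partial\partial^*=cK$ on sections of $K^{k/2}$ and iterates from $k=0$ upward. This shows that the $j$-th eigenvalue on the $k$-th rung is bounded by the round one $\ell(\ell+1)-k(k+1)$. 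Min-max over the semibounded forms compares counting functions, and equality in the pointwise curvature bound along eigensections, together with unique continuation, forces $K_g\equiv1$.

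Now count indices. Let the eigenvalue $2$ of $g$ have multiplicity $m$, occupying positions $i,\dots,i+m-1$. At scale $g_s$, where $K\le1$, we get $\mu_i(g_s)=s(s+1)\le\lambda^{\mathrm{rd}}_i$, so $i\ge s^2$ in the round labelling. At scale $g_{s+1}$, where $K\ge1$, we get $(s+1)(s+2)\ge\lambda^{\mathrm{rd}}_{i+m-1}$, so $i+m-1\le (s+2)^2-1$. We need a sharper, two-sided ordered version, which is where the ladder is essential. It should give that the eigenvalue $2$ cannot lie strictly between the two levels unless equality holds at one end. Concretely, the multiplicities are bounded: $2s+1$ at scale $s$ and $2s+3$ at scale $s+1$, by the Cheng-type bound refined by the ladder. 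This must be compared with the full count of eigenfunctions of eigenvalue $2$, which includes the coordinate functions and also the holomorphic data in the higher normal bundles (the harmonic sequence). Equality in one of the two comparisons then yields $K_{g_s}\equiv1$ or $K_{g_{s+1}}\equiv1$.

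\emph{Step 4: conclude.} Once $K$ is constant, Calabi's classification (or do Carmo--Wallach) identifies the lift as a Calabi sphere, with $N=2s$ or $N=2s+2$.

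The main obstacle is Step 3. One must produce an ordered comparison strong enough that the eigenvalue $2$ at the two adjacent Calabi scales pins down equality. A mere weak inequality on one eigenvalue index is not enough. I would first try to prove, on each rung, a strict monotonicity of the $k$-th ladder eigenvalue in $K$ under one-sided curvature bounds. I would then run the index bookkeeping at both scales simultaneously, so that strictness at both ends contradicts the existence of the eigenvalue $2$ with the multiplicity supplied by $x$.
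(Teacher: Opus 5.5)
Your setup agrees with the paper's: the oriented double cover, Takahashi's identity $-\Delta_g x=2x$, the rescalings $g_s=g/T(s)$ (with $K\le1$) and $g_{s+1}=g/T(s+1)$ (with $K\ge1$), and the two-sided ordered comparison with rigidity as the key input. Step 3, however, does not close the argument. You anchor the comparisons at the positions $i,\dots,i+m-1$ of the eigenvalue $2$. That only yields $s^2\le i$ and $i+m-1\le(s+2)^2-1$, and both are consistent with $2\in\Spec(-\Delta_g)$. Your fallback does not supply the missing contradiction either. It compares multiplicity bounds $2s+1$ and $2s+3$, attributed to ``a Cheng-type bound refined by the ladder,'' with a count of eigenfunctions that includes ``holomorphic data in the higher normal bundles.'' Those bounds are not derived, and the harmonic-sequence data are sections of other bundles, not additional scalar eigenfunctions with eigenvalue $2$. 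You also never say what is compared with what, and $N$ is not controlled by the pinching.

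The missing idea is to evaluate the comparisons at two \emph{fixed} consecutive indices: the boundary between the round degree-$s$ and degree-$(s+1)$ clusters. Do not use the unknown position of the eigenvalue $2$. By Lemma~\ref{lem:round-spectrum}, $\lambda_{(s+1)^2-1}(g_{\rd})=s(s+1)$ and $\lambda_{(s+1)^2}(g_{\rd})=(s+1)(s+2)$.

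Suppose neither endpoint identity holds. Then $g/T(s)$ has $K\le1$ and $K\not\equiv1$, so the strict reverse comparison gives $T(s)\lambda_{(s+1)^2-1}(g)<s(s+1)$, that is, $\lambda_{(s+1)^2-1}(g)<2$. Likewise $g/T(s+1)$ has $K\ge1$ and $K\not\equiv1$, so the strict forward comparison gives $\lambda_{(s+1)^2}(g)>2$. Thus $2$ lies strictly between two consecutive ordered eigenvalues, hence $2\notin\Spec(-\Delta_g)$, which contradicts Takahashi.

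Three features of this step matter. Strictness at both ends is essential, since the weak inequalities only give $\lambda_{(s+1)^2-1}(g)\le2\le\lambda_{(s+1)^2}(g)$. That strictness comes from the rigidity clause, via strict min-max and unique continuation on every rung once $K_g-1\not\equiv0$. And only $2\in\Spec(-\Delta_g)$ is used, with no multiplicity count and no linear fullness.

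Separately, your ladder sketch leaves out what actually produces the index shift $\lambda_{j+2m+1}(A_m)=\lambda_j(C_m)$. That is the curvature-independent computation $\dim_\C\ker B_m=2m+1$ and $\ker B_m^*=0$, which comes from $H^0(\CP^1,\mathcal O(2m))$ and $H^1(\CP^1,\mathcal O(2m))=0$. Without it, your bound on the ``$j$-th eigenvalue on the $k$-th rung'' does not land on the round index blocks $\{l^2,\dots,l^2+2l\}$ that the squeeze above needs.
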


The main analytic ingredient is the following two-sided comparison for the ordered spectrum of a smooth Riemannian $2$-sphere. Combining Theorem \ref{thm:two-sided} and Takahashi's theorem \cite[Theorem 3]{Takahashi} we can give the proof of Theorem \ref{thm:main}.

\begin{theorem}\label{thm:two-sided}
	Let $g$ be a smooth metric on $S^2$, let $g_{\rd}$ be the unit round metric, and index eigenvalues from zero with multiplicity: $$0=\lambda_0(g)<\lambda_1(g)\leq\lambda_2(g)\leq\cdots.$$ Then, for every $i\geq1$,
	\begin{enumerate}
		\item\label{geq} if $K_g\geq1$, then $\lambda_i(g)\geq\lambda_i(g_{\rd})$;
		\item\label{leq} if $K_g\leq1$, then $\lambda_i(g)\leq\lambda_i(g_{\rd})$.
	\end{enumerate}
	Moreover, if equality holds in either situation for some $i_0\ge1$, then $(S^2,g)$ is isometric to $\Sph^2$.
\end{theorem}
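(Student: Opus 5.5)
We will use the line-bundle ladder of Lin--Wang--Xu together with a Dolbeault (Bochner-type) computation. By uniformization, $g=e^{2u}g_{\rd}$ on $S^2\cong\CP^1$, and $\Delta_g=e^{-2u}\Delta_{\rd}$. Then $K_g=e^{-2u}(1-\Delta_{\rd}u)$ with $\Delta_{\rd}\ge0$ the positive Laplacian. The round spectrum is $k(k+1)$ with multiplicity $2k+1$, so the $i$-th round eigenvalue is $k(k+1)$ exactly when $k^2\le i\le k^2+2k$. It therefore suffices to prove the following for every $k\ge1$. Under $K_g\ge1$, the spectral counting function satisfies $N_g(k(k+1)-0)\le k^2$, i.e. at most $k^2$ eigenvalues of $\Delta_g$ (with multiplicity) lie strictly below $k(k+1)$. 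Under $K_g\le1$, $\Delta_g$ has at least $(k+1)^2$ eigenvalues $\le k(k+1)$.

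\textbf{Step 1 (ladder).} On the holomorphic line bundles $\mathcal O(2m)$, i.e. $K^{-m}$ with the metric induced by $g$, consider the $\bar\partial$-operators $\bar\partial_m$ acting on sections and their formal adjoints. The Weitzenböck/Kodaira identity reads
\[
\bar\partial_m^*\bar\partial_m-\bar\partial_{m}\bar\partial_{m}^*=c_m K_g
\]
with $c_m$ linear in $m$. These identities give the ladder $\Delta_g=\bar\partial^*\bar\partial$ on functions. Composing $\partial$-type raising operators $D_m=\bar\partial_m^*$-adjoint maps sends functions to sections of $K^{k}$. Using the identities iteratively, we express $\Delta_g$ through a factorization
\[
\Delta_g\,(\text{restricted appropriately})=\prod\bigl(\text{nonneg. ops}+c_jK_g\bigr).
\]
On the round sphere each $K_g$ equals $1$ and the factors reduce to $k(k+1)$ thresholds. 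Concretely, for $f$ in the span of the first eigenfunctions, push $f$ up the ladder $k$ steps: $\partial^k f\in\Gamma(K^k)$. The kernel of the top operator consists of holomorphic sections of $K^{-k}\cong\mathcal O(2k)$, whose dimension $2k+1$ is given by Riemann--Roch/Dolbeault. Summing over levels $j\le k$ yields $\sum_{j<k}(2j+1)=k^2$.

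\textbf{Step 2 (comparison via min-max).} For (1), take $f$ orthogonal to the space $V_{k-1}$ built by pulling down holomorphic objects of levels $<k$. The integrated Weitzenböck identities at each rung, with $K_g\ge1$ so that every curvature term carries the favourable sign, give a Rayleigh quotient bound $\int|\nabla f|^2\ge k(k+1)\int f^2$. The dimension of $V_{k-1}$ is $k^2$, and min-max then gives $\lambda_{k^2}(g)\ge k(k+1)$. For (2) the signs reverse, and we construct a $(k+1)^2$-dimensional trial space on which the quotient is $\le k(k+1)$. This space is obtained by lowering holomorphic sections of $\mathcal O(2j)$ for $j\le k$ through $\bar\partial^*$-type operators. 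For this we use the closed semibounded form representation for the compositions, together with elliptic regularity, so that the integrations by parts are justified.

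\textbf{Step 3 (rigidity).} If equality holds for some $i_0$, then the trial functions or eigenfunctions saturate the inequalities. The curvature terms then give $\int(K_g-1)|\partial^j f|^2=0$ for some nonzero section $\partial^j f$ of a line bundle satisfying an elliptic equation. By unique continuation, $\partial^jf$ vanishes only on a nowhere-dense set, so $K_g\equiv1$ and $(S^2,g)$ is isometric to $\Sph^2$.

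The main obstacle is Step 1/2: we need the ordered (not just first-eigenvalue) comparison with the exact multiplicity count. This means showing that the iterated ladder gives the correct dimension bookkeeping even though the holomorphic structure is fixed while the hermitian metric varies with $g$. We would first try to handle it by noting that the kernels are metric-independent (holomorphic sections) while only the Weitzenböck curvature terms see $K_g$.
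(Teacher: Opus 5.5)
\textbf{There is a genuine gap in Step 2, the comparison step.} Your ingredients are the right ones: the bundles $L_m=\mathcal K_\Sigma^{-m}\simeq\mathcal O(2m)$, the curvature commutator, the metric-independent kernels with $\dim\ker B_m=2m+1$, and the count $\sum_{j<k}(2j+1)=k^2$. But the comparison does not work as you propose it.

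\textbf{Case (2) fails already for $k=1$.} Take $\sigma\in\ker B_1$ and $f=B_0^*\sigma$. The commutator $B_0B_0^*-B_1^*B_1=2K_g$ gives $B_0f=2K_g\sigma$ and $\|f\|_2^2=2\int K_g|\sigma|^2\,d\mu_g$. Hence
\[
\frac{\|B_0f\|_2^2}{\|f\|_2^2}=\frac{2\int_\Sigma K_g^2|\sigma|^2\,d\mu_g}{\int_\Sigma K_g|\sigma|^2\,d\mu_g},
\]
and this is $\le2$ only if $\int K_g(K_g-1)|\sigma|^2\,d\mu_g\le0$. Under $K_g\le1$ this can fail, because $K_g$ is allowed to be negative.

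Here is a concrete example. Take the surface of revolution $dt^2+r(t)^2d\theta^2$ made of two congruent unit-sphere caps larger than hemispheres, so $r=\sin t$ on $[0,t_1]$ with $t_1\in(\pi/2,\pi)$. Join them along a suitably smoothed crease, on which $K_g=-r''/r\le0$. Let $\sigma$ be the $(1,0)$-part of the Killing field $\partial_\theta$. Then $f$ is a constant multiple of $r'(t)$. This function jumps from $\cos t_1$ to $-\cos t_1$ across the crease, so its Rayleigh quotient blows up as the crease is sharpened, even though $K_g\le1$. The point is that pushed-down holomorphic sections see the curvature quadratically, and the sign of $K_g-1$ alone does not control them.

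\textbf{Case (1) is asserted rather than derived.} Your bound on $V_{k-1}^\perp$ is not what the rung identities give. Those identities are form inequalities on $H_{m+1}$. Transporting them down to a Rayleigh quotient on functions orthogonal to a fixed pushed-down subspace requires controlling $(B_mB_m^*)^{-1}$ on subspaces that are not spectral once $K_g\not\equiv1$.

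A smaller point: as written, your Weitzenb\"ock identity subtracts operators acting on different bundles. The correct identity is $B_mB_m^*-B_{m+1}^*B_{m+1}=2(m+1)K_g$ on $L_{m+1}$. There is also no factorization of $\Delta_g$ as a product, only intertwining.

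\textbf{The missing idea is to transfer spectra rather than test functions.} For $\mu>0$, $B_m$ maps $\ker(B_m^*B_m-\mu)$ isomorphically onto $\ker(B_mB_m^*-\mu)$. Since $\dim\ker B_m=2m+1$ and $\ker B_m^*=0$, the operators $A_m=B_m^*B_m+m(m+1)$ and $C_m=B_mB_m^*+m(m+1)$ satisfy $\lambda_{j+2m+1}(A_m)=\lambda_j(C_m)$.

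The shift $m(m+1)$ is chosen so that the commutator becomes the exact identity
\[
c_m[v]-a_{m+1}[v]=2(m+1)\int_\Sigma(K_g-1)|v|^2\,d\mu_g,\qquad v\in W^{1,2}(L_{m+1}).
\]
Here $K_g$ enters linearly, and both forms live on the same space $H_{m+1}$. Min-max therefore gives $\lambda_j(C_m)\ge\lambda_j(A_{m+1})$ if $K_g\ge1$ and $\le$ if $K_g\le1$. No positivity of $K_g$ is needed.

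Chaining along $j_m=l^2-m^2+q$ yields $\lambda_{l^2+q}(-\Delta_g)\mathrel{\gtreqless}\lambda_q(A_l)=l(l+1)$ for $0\le q\le2l$. No trial functions on $S^2$ are ever written down.

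\textbf{Rigidity} likewise comes from making each rung strict when $K_g\not\equiv1$, not from saturation of trial functions. The paper does this with unique continuation combined with a Vandermonde argument on the finite-dimensional spectral subspace of $C_m$ or $A_{m+1}$.
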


Theorem \ref{thm:two-sided} \eqref{geq}, including the rigidity statement, was proved by Lin, Wang, and Xu \cite[Theorem 1.8]{LinWangXu}. For completeness, we give a self-contained presentation of their line-bundle argument. The same exact quadratic-form identity can be used in the opposite direction when $K_g\leq1$, yielding the second implication.  This reverse comparison is the additional ingredient needed to close the Simon gaps.\par
We next turn to surfaces with parallel mean curvature vector. For brevity, surfaces with parallel mean curvature vector will occasionally be called PMC surfaces. For a general immersion, write
\begin{equation*}
	\mathbf H=\frac12\operatorname{tr}_gA,\quad H=|\mathbf H|,\quad\kappa=1+H^2,
\end{equation*}
and let $A^\circ=A-g\otimes\mathbf H$ be the traceless second fundamental form with squared norm denoted by $|A^\circ|^2$. If $\mathbf H$ is parallel, then $H$ and $\kappa$ are constant. In our previous work \cite{DingGeLiPMC}, using Simons-type integral identities for $A^\circ$, we proved the first two PMC gaps and rigidity in one-sided neighborhoods of both endpoints of the third gap. The theorem below settles all gaps for PMC surfaces.

\begin{theorem}\label{thm:pmc-gaps}
	Let $x\colon M^2\to\Sph^N$ be a closed immersion with parallel mean curvature vector, and let $s$ be a positive integer. Suppose that the Gaussian curvature of $M$ satisfies
	\begin{equation}\label{eq:pmc-K-pinching}
		\frac{2\kappa}{(s+1)(s+2)}\leq K\leq\frac{2\kappa}{s(s+1)}.
	\end{equation}
	Then
	\begin{equation*}\label{eq:pmc-K-endpoints}
		K\equiv\frac{2\kappa}{(s+1)(s+2)}\quad\text{or}\quad K\equiv\frac{2\kappa}{s(s+1)}.
	\end{equation*}
	Moreover, the lift of $x$ to the orientation double cover of $M$ is one of Calabi's $2$-spheres in a totally umbilic sphere of $\Sph^N$.
\end{theorem}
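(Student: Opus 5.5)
The plan is to reduce the PMC statement to the minimal case of Theorem \ref{thm:main}, with $\Sph^N$ replaced by a round sphere of curvature $\kappa$.

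\textbf{Case $H=0$.} Then $\kappa=1$ and the statement is exactly Theorem \ref{thm:main}.

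\textbf{Case $H>0$.} Put $\nu=\mathbf H/H$, a parallel unit normal field.

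1. Since $K>0$ by \eqref{eq:pmc-K-pinching}, Gauss--Bonnet shows that the orientation double cover is a sphere. Lift $x$ to this cover, so that $M\cong S^2$.

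2. Apply the standard reduction for PMC surfaces with positive curvature. The $(2,0)$-part of $\langle A,\nu\rangle$ is a holomorphic quadratic differential, by the Codazzi equation and the parallelism of $\nu$. On $S^2$ it therefore vanishes, so $A^\nu=H g$ and $\nu$ is umbilic.

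3. Consider $y=x+\frac{1}{H}\cdots$; more precisely, use the vector $c=\frac{1}{\sqrt{1+H^2}}(H x+\nu)$. Differentiating gives $dc=\frac{1}{\sqrt{\kappa}}(H\,dx-H\,dx)=0$, using $d\nu=-H\,dx$ (umbilicity plus parallelism). Thus $c$ is a constant unit vector, and $\langle x,c\rangle=H/\sqrt\kappa$ is constant.

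4. It follows that $x(M)$ lies in the totally umbilic small sphere $\Sph^{N-1}(\kappa)=\Sph^N\cap\{\langle\cdot,c\rangle=H/\sqrt\kappa\}$ of radius $1/\sqrt\kappa$. Within it, $x$ is minimal, since its mean curvature there is the component of $\mathbf H$ orthogonal to the umbilic direction, and that component vanishes.

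5. Rescale the small sphere by $\sqrt\kappa$. This gives a minimal immersion $\tilde x\colon S^2\to\Sph^{N-1}$ whose curvature is $\tilde K=K/\kappa$. Hypothesis \eqref{eq:pmc-K-pinching} becomes $K(s+1)\le\tilde K\le K(s)$.

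6. Theorem \ref{thm:main} now gives $\tilde K\equiv K(s+1)$ or $\tilde K\equiv K(s)$, with $\tilde x$ a Calabi sphere. Undoing the scaling yields both conclusions of the theorem.

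An alternative to quoting Theorem \ref{thm:main} is to apply Theorem \ref{thm:two-sided} directly to $g/T$ at the two Calabi scales, as in the minimal case. Takahashi's theorem then applies in the small sphere: the coordinates of $\tilde x$ are eigenfunctions with eigenvalue $2$ for $\kappa g$.

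The main obstacle is step 2. One must check carefully that the holomorphic differential argument uses only $\nabla^\perp\mathbf H=0$ and genus zero. It should not require any assumption that the normal bundle is flat in other directions, and the Codazzi computation $\bar\partial\langle A(\partial,\partial),\nu\rangle=0$ handles this. Once umbilicity in the $\nu$ direction is established, step 3 is routine.
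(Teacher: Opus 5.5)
Your proof is correct, but it obtains the key reduction differently from the paper. After lifting to $\widehat M\cong\Sph^2$, the paper invokes Yau's classification (Theorem \ref{thm:Yau-PMC-classification}) and splits into two branches.

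\begin{itemize}
\item If the surface is minimal in a totally umbilical hypersphere, the composition formula gives $A_{\mathbf H}=H^2\Id$. Lemma \ref{lem:pmc-small-sphere} then produces the rescaled minimal immersion, to which Theorem \ref{thm:main} is applied.
\item If it is CMC in a totally umbilical $Q^3$, the paper checks that $Q^3$ is a space form and applies the Hopf theorem (Lemma \ref{lem:pmc-Hopf}) to get $A^\circ=0$. Hence $K\equiv\kappa$, which fits the pinching only for $s=1$.
\end{itemize}

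You instead run the Hopf argument directly in the $\mathbf H$-direction, in arbitrary codimension. In a conformal coordinate with $g=\lambda^2|dz|^2$ one has $A(\partial_z,\partial_{\bar z})=\tfrac{\lambda^2}{2}\mathbf H$ and $\nabla_{\partial_{\bar z}}\partial_z=0$. Codazzi therefore gives $\partial_{\bar z}\langle A(\partial_z,\partial_z),\mathbf H\rangle=\tfrac{\lambda^2}{2}\langle\nabla^\perp_{\partial_z}\mathbf H,\mathbf H\rangle+\langle A(\partial_z,\partial_z),\nabla^\perp_{\partial_{\bar z}}\mathbf H\rangle=0$. Then $H^0(\CP^1,\mathcal O(-4))=0$ yields $A_{\mathbf H}=H^2\Id$ outright. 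That is exactly the hypothesis of Lemma \ref{lem:pmc-small-sphere}, and your steps 3--5 reproduce that lemma (your $c$ equals $\sqrt\kappa\,a/H$).

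The payoff is that Yau's theorem and the case split disappear. The totally umbilic small $2$-spheres of Yau's second branch are simply great $2$-spheres of the small hypersphere $\Sph^N\cap\{\langle\cdot,c\rangle=H/\sqrt\kappa\}$. Theorem \ref{thm:main}, applied with $\tilde K\equiv1$, then returns the $s=1$ upper endpoint automatically; for $N=3$ the target is $\Sph^2$, which Theorem \ref{thm:main} allows.

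Your route is therefore more elementary and self-contained. It uses only Codazzi and the same degree fact the paper already needs for Lemma \ref{lem:pmc-Hopf}. The paper's route leans on a classification valid in every genus, which is more than needed here, since $K>0$ forces genus zero anyway.
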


The proof of Theorem \ref{thm:pmc-gaps} combines Theorem \ref{thm:main} with Yau's classification theorem; the three-dimensional branch is handled by the Hopf quadratic differential.\par
The radial version of the same mechanism gives a higher-dimensional result under rotational symmetry. On the lower side we use precisely the Ricci-curvature hypothesis of Lin, Wang, and Xu \cite[Theorem 6.5]{LinWangXu}, rather than the stronger sectional-curvature lower bound. For $n\geq3$ and $r\geq1$, put
\begin{equation*}
	\mu_{n,r}=r(r+n-1),\quad\kappa_{n,r}=\frac{n}{\mu_{n,r}}=\frac{n}{r(r+n-1)}.
\end{equation*}
Here and below, inequalities involving $\sec_g$ are understood pointwise on all tangent two-planes, while inequalities between Ricci tensors are understood pointwise as inequalities of symmetric $(0,2)$-tensors.

\begin{theorem}\label{thm:higher-rotational-gap}
	Let $n\geq 3$ and $r\geq 1$ be integers, and let $x\colon (S^n,g)\to\Sph^N$ be a closed minimal immersion whose induced metric $g$ is rotationally symmetric. Suppose that the Ricci tensor $\operatorname{Ric}_g$ and the sectional curvature $\sec_g$ of $S^n$ satisfy
	\begin{equation}\label{eq:higher-pinching}
		\operatorname{Ric}_g\geq(n-1)\frac{n}{(r+1)(r+n)}g,\quad\sec_g\leq\frac{n}{r(r+n-1)}.
	\end{equation}
	Then
	\begin{equation}\label{eq:higher-endpoints}
		\sec_g\equiv\frac{n}{r(r+n-1)}\quad\text{or}\quad\sec_g\equiv\frac{n}{(r+1)(r+n)}.
	\end{equation}
\end{theorem}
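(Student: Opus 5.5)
We use Takahashi's theorem together with a two-sided spectral comparison for rotationally symmetric metrics on $S^n$, modelled on Theorem \ref{thm:two-sided}. Since $x$ is minimal in $\Sph^N$, each coordinate function $x_A$ satisfies $\Delta_g x_A=-n x_A$. Because $x$ is an immersion, the $x_A$ are not all zero, so $n$ is an eigenvalue of $-\Delta_g$. The goal is to locate $n$ in the spectrum of $g$ relative to two round model spheres and let the pinching \eqref{eq:higher-pinching} squeeze it between two consecutive levels.

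Write $g=dt^2+f(t)^2g_{\Sph^{n-1}}$ with $t\in[0,L]$. The radial sectional curvature is $-f''/f$ and the tangential one is $(1-f'^2)/f^2$. Separating variables in spherical harmonics of degree $k$ reduces $-\Delta_g$ to the Sturm--Liouville operators
\[
L_k u=-f^{1-n}(f^{n-1}u')'+\frac{k(k+n-2)}{f^2}u .
\]
The comparison statement we aim for is as follows. Let $g_\kappa$ denote the round metric on $S^n$ of curvature $\kappa$, whose eigenvalues are $\kappa\,\mu_{n,j}$ with multiplicities counted. If $\operatorname{Ric}_g\geq (n-1)\kappa g$, then $\lambda_i(g)\geq\lambda_i(g_\kappa)$. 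If $\sec_g\leq\kappa$, then $\lambda_i(g)\leq\lambda_i(g_\kappa)$. Equality for some $i\geq1$ forces $g=g_\kappa$.

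The Ricci direction is exactly \cite[Theorem 6.5]{LinWangXu}, and we would quote it. For the sectional direction we would transplant the radial argument used for Theorem \ref{thm:two-sided}\eqref{leq}, comparing each $L_k$ with its model operator. One tool is a Prüfer-angle or Sturm comparison along $t$, using $f''+\kappa f\geq0$ and $f'^2+\kappa f^2\leq1$. Alternatively, one can run the same exact quadratic-form identity as in the line-bundle argument, with the sign of the curvature term reversed. The comparison has to be carried out sector by sector in $k$, and the multiplicities of the spherical harmonics must be counted so that the ordered spectra can be compared. This sectional-curvature step is the main obstacle. In particular, we must make sure that the sector-by-sector inequality survives the interleaving of eigenvalues coming from different values of $k$. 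We expect to handle this with a min-max over subspaces spanned by separated functions of the form $u_{k,m}(t)Y_k$.

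Now apply the comparison with the two curvatures
\[
\kappa_-=\kappa_{n,r+1}=\frac{n}{\mu_{n,r+1}},\qquad \kappa_+=\kappa_{n,r}=\frac{n}{\mu_{n,r}} .
\]
Let $m_j$ be the number of eigenvalues of the round unit sphere at levels $\mu_{n,1},\dots,\mu_{n,j}$, counted with multiplicity. Take $i=m_r+1$. The Ricci bound gives
\[
\lambda_i(g)\geq \kappa_-\,\mu_{n,r+1}=n .
\]
Take $i=m_r$. The sectional bound gives
\[
\lambda_i(g)\leq \kappa_+\,\mu_{n,r}=n .
\]
Hence
\[
\lambda_{m_r}(g)\leq n\leq \lambda_{m_r+1}(g).
\]
This alone does not exclude $n$ from lying strictly between these two eigenvalues. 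But $n$ is itself an eigenvalue of $g$, and all eigenvalues of $g$ are ordered, so $n$ must equal $\lambda_{m_r}(g)$ or $\lambda_{m_r+1}(g)$. Otherwise $n=\lambda_j(g)$ for some $j\leq m_r-1$ or $j\geq m_r+2$, and a monotonicity argument using the same comparison at the neighbouring index gives a contradiction: for example, $\lambda_{m_r-1}(g)\leq\lambda_{m_r}(g)\leq n$, with strict inequality unless equality propagates.

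If $n=\lambda_{m_r}(g)$, equality holds in the sectional comparison, and rigidity gives $g=g_{\kappa_+}$, that is, $\sec_g\equiv\frac{n}{r(r+n-1)}$. If $n=\lambda_{m_r+1}(g)$, equality holds in the Ricci comparison, so $g=g_{\kappa_-}$, giving the other alternative in \eqref{eq:higher-endpoints}.

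For the rigidity we would use unique continuation \cite{Aronszajn} in the equality case of the quadratic-form identity. This forces the curvature term to vanish identically wherever a separated eigenfunction is nonzero, which gives $f=\kappa^{-1/2}\sin(\sqrt{\kappa}\,t)$.
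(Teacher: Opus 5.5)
There is a genuine gap: the reverse comparison under $\sec\le\kappa$, which you flag as the main obstacle, is never proved, and it carries the argument. The rest of your outline is the paper's. You rescale at the two endpoints, quote \cite[Theorem 6.5]{LinWangXu} for the Ricci side, use Takahashi to put $n$ in the spectrum, and squeeze at the consecutive indices $m_r=J_{n,r}-1$ and $m_r+1=J_{n,r}$. Your ending (non-strict sandwich, then rigidity) is the contrapositive of the paper's (non-round $\Rightarrow$ both inequalities strict $\Rightarrow n\notin\Spec(-\Delta_g)$). The monotonicity digression is unnecessary: $\lambda_{m_r}(g)\le n\le\lambda_{m_r+1}(g)$ with $n$ an eigenvalue already forces $n$ to equal one of the two.

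The missing step is the ordered comparison $\lambda_i(h)\le\lambda_i(g_{\rd,n})$ for rotationally symmetric $h$ with $\sec_h\le1$, together with its rigidity. This is not in \cite{LinWangXu}; it is precisely Theorem~\ref{thm:reverse-rotational}. Without it only $\lambda_{J_{n,r}}(g)\ge n$ survives, which forces nothing. Neither of your suggested routes supplies it. The Sturm/Pr\"ufer route starts from a reversed inequality: $K_{\rm tan}\le\kappa$ means $f'^2+\kappa f^2\ge1$, whereas your $f'^2+\kappa f^2\le1$ is the forward hypothesis. A direct comparison of $L_k$ with its model is also not straightforward, since the two operators live on intervals of different lengths with different singular weights. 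Nor is there a radial argument behind Theorem~\ref{thm:two-sided} to transplant. That proof is the line-bundle ladder, where $A_0=-\Delta_g$ and a single defect identity drives everything; on $S^n$ each angular sector has its own operator $L_m\ne A_m$.

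What is missing is the radial factorization, which works as follows.
\begin{itemize}
\item Set $B_mu=u'-m(f'/f)u$, $\alpha_m=m(m+n-1)$, $A_m=B_m^*B_m+\alpha_m$ and $C_m=B_mB_m^*+\alpha_m$.
\item One needs the two exact identities \eqref{eq:radial-partner-defect} and \eqref{eq:radial-laplacian-defect}. The second contains $K_{\rm tan}-1$, which is where the full sectional bound, not just $K_{\rm rad}\le1$, enters.
\item The kernels are $\ker B_m=\R f^m$ and $\ker B_m^*=0$; the candidate $f^{-(m+n-1)}$ is not in $L^2(f^{n-1}dt)$ at a pole when $n\ge3$. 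This gives $\lambda_0(A_m)=\alpha_m$ and $\lambda_{j+1}(A_m)=\lambda_j(C_m)$.
\item Iterating $C_m\le A_{m+1}$ and then using $L_m\le A_m$ yields the sectorwise bound $\lambda_j(L_m)\le\alpha_{m+j}$.
\item The branching identity $d_{n,k}=\sum_{m\le k}d_{n-1,m}$ then produces $J_{n,\ell}$ eigenvalues $\le\mu_{n,\ell}$. Once this sectorwise bound is known, your interleaving worry disappears.
\end{itemize}
Rigidity also needs a concrete mechanism. If $K_{\rm rad}\equiv1$, then $f=\sin t$ and $h$ is round. Otherwise $K_{\rm rad}<1$ on an open interval, and Lemma~\ref{lem:strict-radial-minmax} (min-max plus a Vandermonde/ODE-uniqueness argument) makes every rung strict. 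This gives $\lambda_j(L_m)<\alpha_{m+j}$ for $(m,j)\ne(0,0)$.
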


The rotational hypothesis is used only in the spectral comparisons. After scaling by the lower endpoint, the Ricci hypothesis in \eqref{eq:higher-pinching} becomes exactly $\operatorname{Ric}\geq(n-1)g$, so the forward ordered comparison and its rigidity follow directly from Lin, Wang, and Xu \cite[Theorem 6.5]{LinWangXu}. We prove the reverse comparison from the upper sectional-curvature bound by using the radial factorization of Lin, Wang, and Xu \cite[Sections 3-5]{LinWangXu} with the opposite signs in both exact defect identities. Theorem \ref{thm:higher-rotational-gap} then follows by Theorem \ref{thm:reverse-rotational}, Lin, Wang, and Xu \cite[Theorem 6.5]{LinWangXu}, and Takahashi's theorem \cite[Theorem 3]{Takahashi}.

\section{Preliminaries}\label{sec:prelim}

We first fix the notation used throughout the paper. The set of nonnegative integers is $\Nzero=\{0,1,2,\cdots\}$. Unless otherwise stated, the letters $i,j,l,m,q$ denote elements of $\Nzero$, while the Simon-gap index $s$ is a positive integer. The symbol $\Sph^n\subset\R^{n+1}$ denotes the unit round $n$-sphere, and $g_{\Sph^n}$ denotes its round metric of sectional curvature $1$. The notation $g_{\rd}$ is reserved for the unit round metric on $\Sph^2$. In the immersed-surface parts, $N\geq2$ is an integer and
\begin{equation*}
	x\colon M^2\to\Sph^N
\end{equation*}
is a smooth immersion of a connected closed surface, and $g=x^*g_{\Sph^N}$ is its induced metric. The symbols $\nabla$, $R$, $K=K_g$, $\Delta_g=\operatorname{div}_g\nabla$, and $\dd\mu_g$ denote, respectively, the Levi-Civita connection, the Riemann curvature tensor, the Gaussian curvature, the geometric Laplacian, and the Riemannian area measure of $g$. Our sign conventions are
\begin{equation*}
	R(X,Y)=\nabla_X\nabla_Y-\nabla_Y\nabla_X-\nabla_{[X,Y]},\quad K_g=\langle R(e_1,e_2)e_2,e_1\rangle,
\end{equation*}
where $[X,Y]$ is the Lie bracket, $\langle\cdot,\cdot\rangle$ is the inner product induced by $g$, and $e_1,e_2$ is a local $g$-orthonormal frame. We write $\operatorname{div}=\operatorname{div}_g$ when the metric is fixed. Thus $-\Delta_g$ is nonnegative. The second fundamental form of $x$ is denoted by $A$, its pointwise squared norm by $|A|^2$, and its mean curvature vector and its length by
\begin{equation*}
	\mathbf H=\frac12\operatorname{tr}_g A,\qquad H=|\mathbf H|,
\end{equation*}
where $\operatorname{tr}_g$ denotes the trace over the two tangent variables using $g$. The immersion is minimal when $\mathbf H\equiv0$, and it has parallel mean curvature vector when $\nabla^\perp\mathbf H=0$, where $\nabla^\perp$ denotes the induced normal connection on the normal bundle. In the PMC part we use
\begin{equation*}
	\kappa=1+H^2\quad\text{and}\quad A^\circ=A-g\otimes\mathbf H.
\end{equation*}
The immersion is linearly full when its image is not contained in a proper totally geodesic subsphere. The letter $M$ always refers to the original immersed surface, whereas $\Sigma$ denotes a generic oriented smooth $2$-sphere used in the spectral and line-bundle arguments. In those arguments, $g$ denotes an arbitrary smooth Riemannian metric on $\Sigma$, not necessarily one induced by an immersion. A hat, as in $\widehat M$ or $\widehat g$, denotes passage to the oriented double cover or pullback to that cover. The map of the cover is written $\pi$.\par
For a Hermitian complex vector bundle $E\to\Sigma$, $C^\infty(E)$, $L^2(E)$, and $W^{k,2}(E)$ denote, respectively, smooth sections, square-integrable sections, and Sobolev sections with $k$ weak covariant derivatives in $L^2$. The bundle metric and the area measure $\dd\mu_g$ are used in all these spaces. Pointwise Hermitian products and norms are written $\langle\cdot,\cdot\rangle$ and $|\cdot|$; the global $L^2$ product and norm are
\begin{equation*}
	\langle u,v\rangle_2=\int_\Sigma\langle u,v\rangle\,\dd\mu_g,\quad\|u\|_2^2=\langle u,u\rangle_2.
\end{equation*}
The Hermitian products are linear in the first variable. When no confusion is possible, the subscript $2$ on the global inner product is suppressed. For a linear operator $P$, $\Dom(P)$, $\ker P$, $\Ran P$, $P^*$, and $\Spec(P)$ denote its domain, kernel, range, Hilbert-space adjoint, and spectrum. The superscript $^*$ has two other standard uses: for a smooth map $f$, $f^*$ denotes pullback, while for a complex vector bundle $E$, $E^*$ denotes its complex dual bundle; the intended meaning is determined by the type of the object carrying the superscript. The symbol $P^\dagger$ is used only for a formal $L^2$-adjoint differential expression before closure. The identity map on the relevant fiber or Hilbert space is denoted by $\Id$.\par
If $P$ is a lower-bounded self-adjoint operator with compact resolvent, its eigenvalues, repeated according to multiplicity and indexed from zero, are
\begin{equation*}
	\lambda_0(P)\leq\lambda_1(P)\leq\cdots.
\end{equation*}
For the scalar Laplacian we abbreviate $\lambda_j(g)=\lambda_j(-\Delta_g)$. If $p$ is the closed sesquilinear form associated with $P$, we write $p[u]=p[u,u]$, and its Rayleigh quotient is $p[u]/\|u\|_2^2$. The zero-based min-max formula is
\begin{equation}\label{eq:minmax-prelim}
	\lambda_j(P)=\inf_{\substack{F\subset\Dom(p)\\\dim_{\C}F=j+1}}\;\sup_{0\ne u\in F}\frac{p[u]}{\|u\|_2^2}.
\end{equation}
For two forms $p,q$ with the same domain, the notation $q\geq p$ means $q[u]\geq p[u]$ for every vector in that common domain; $q\leq p$ is defined analogously. For a real scalar operator the same min-max formula is taken over real subspaces; Remark \ref{rem:complex-multiplicity} explains why the resulting scalar indices agree with those obtained after complexification.\par
For an oriented Riemannian surface $\Sigma$, the metric-compatible complex structure is denoted by $J$. The complexified tangent and cotangent bundles split into their $\pm i$ eigenspaces,
\begin{equation*}
	T\Sigma_{\C}=T^{1,0}\Sigma\oplus T^{0,1}\Sigma,\quad T^*\Sigma_{\C}=\Lambda^{1,0}T^*\Sigma\oplus\Lambda^{0,1}T^*\Sigma.
\end{equation*}
For a complex vector bundle $E$, $\Omega^{p,q}(\Sigma,E)=C^\infty(\Lambda^{p,q}T^*\Sigma\otimes E)$ denotes the smooth $E$-valued forms of type $(p,q)$. The canonical holomorphic line bundle is $\mathcal K_\Sigma=\Lambda^{1,0}T^*\Sigma$, and $\mathcal K_\Sigma^{-m}=(\mathcal K_\Sigma^*)^{\otimes m}$. For a holomorphic bundle $E$, $\bar\partial_E$ denotes its Dolbeault operator, $\mathscr O(E)$ its sheaf of holomorphic sections, and $H_{\bar\partial}^{p,q}(\Sigma,E)$ its Dolbeault cohomology. We write $H^q(\Sigma,E)$ as shorthand for the sheaf cohomology $H^q(\Sigma,\mathscr O(E))$. The standard holomorphic line bundle of degree $n$ over $\CP^1$ is denoted by $\mathcal O(n)$; by the same symbol we also denote its sheaf of holomorphic sections when it occurs inside $H^q$. These cohomology symbols are distinct from the Sobolev spaces $W^{k,2}(E)$.\par
We first recall several standard facts from submanifold theory and complex geometry.
\begin{lemma}\label{lem:gauss}
	For a minimal surface in the unit sphere,
	\begin{equation*}
		K=1-\frac12|A|^2.
	\end{equation*}
\end{lemma}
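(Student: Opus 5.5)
The plan is to derive the identity from the Gauss equation of the immersion, combined with the fact that the ambient sphere has constant sectional curvature $1$. Let $e_1,e_2$ be a local orthonormal tangent frame on $M$. The Gauss equation for a submanifold of $\Sph^N$ reads
\begin{equation*}
	K=\langle R(e_1,e_2)e_2,e_1\rangle=1+\langle A(e_1,e_1),A(e_2,e_2)\rangle-|A(e_1,e_2)|^2 .
\end{equation*}
Here the constant $1$ is the ambient sectional curvature of the plane $\operatorname{span}\{e_1,e_2\}$. With the paper's conventions, the sign in front of the second-fundamental-form terms is the standard one, so that a totally geodesic $\Sph^2$ gives $K=1$.

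Next I use minimality. The condition $\mathbf H=\tfrac12\operatorname{tr}_gA=0$ gives $A(e_2,e_2)=-A(e_1,e_1)$. Substituting, the Gauss equation becomes
\begin{equation*}
	K=1-|A(e_1,e_1)|^2-|A(e_1,e_2)|^2 .
\end{equation*}

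Finally I expand the squared norm of $A$ in the same frame:
\begin{equation*}
	|A|^2=\sum_{a,b=1}^2|A(e_a,e_b)|^2=|A(e_1,e_1)|^2+|A(e_2,e_2)|^2+2|A(e_1,e_2)|^2=2\bigl(|A(e_1,e_1)|^2+|A(e_1,e_2)|^2\bigr).
\end{equation*}
The second equality splits the sum into diagonal and off-diagonal terms. The last equality uses $A(e_2,e_2)=-A(e_1,e_1)$ together with the symmetry of $A$. Comparing this with the previous display gives $K=1-\tfrac12|A|^2$. Both sides are frame-independent, so the identity holds globally on $M$.

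There is no real obstacle here. The only points needing care are the sign conventions: the curvature convention fixed in Section~\ref{sec:prelim}, and the normalization $\mathbf H=\tfrac12\operatorname{tr}_gA$. Neither affects the argument, since the result only uses minimality $\operatorname{tr}_gA=0$.
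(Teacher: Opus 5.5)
Your proof is correct and takes the same route as the paper: the paper states this lemma as the Gauss equation specialized to minimal surfaces, and its proof of Lemma~\ref{lem:pmc-gauss} uses the traced form $2K=2+|\operatorname{tr}_gA|^2-|A|^2$, which for $\operatorname{tr}_gA=0$ is exactly your frame computation. One minor point: your totally geodesic $\Sph^2$ check only tests the constant term (there $A=0$), so it says nothing about the sign of the $A$-terms, but the sign you use is the standard correct one and the argument is unaffected.
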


\begin{lemma}\label{lem:cover}
	Let $M$ be a connected closed surface with $K>0$. Let $\pi\colon\widehat M\to M$ be the identity when $M$ is orientable and the connected orientation double cover otherwise. Then $\widehat M$ is diffeomorphic to $\Sph^2$. If $g$ and an immersion $x$ are lifted by $\pi$, all local submanifold equations pull back. In particular, curvature, minimality or the PMC condition, and pointwise pinching are preserved.
\end{lemma}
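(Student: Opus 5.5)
The plan has two parts: first settle the topology of $\widehat M$ using Gauss--Bonnet and uniformization, then check that the local equations pass to the cover.

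\textbf{Step 1: $\widehat M$ is orientable with positive Euler characteristic.} If $M$ is orientable, $\pi$ is the identity and $\widehat M=M$. If $M$ is not orientable, I take the connected orientation double cover $\pi\colon\widehat M\to M$. It is a smooth two-sheeted covering; it is connected exactly because $M$ is non-orientable, and it is closed because $M$ is. I put the pullback metric $\widehat g=\pi^*g$ on $\widehat M$. Since $\pi$ is a local isometry, the Gaussian curvature satisfies $K_{\widehat g}=K\circ\pi>0$, and $\widehat M$ is a closed oriented surface.

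\textbf{Step 2: $\widehat M$ is a sphere.} By Gauss--Bonnet,
\begin{equation*}
2\pi\chi(\widehat M)=\int_{\widehat M}K_{\widehat g}\,\dd\mu_{\widehat g}>0,
\end{equation*}
so $\chi(\widehat M)>0$. A closed orientable surface has $\chi=2-2\,\mathrm{genus}$, so $\widehat M$ has genus zero. The classification of closed surfaces, or equivalently uniformization \cite{JostRiemann}, then shows that $\widehat M$ is diffeomorphic to $\Sph^2$.

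As a side remark, in the non-orientable case one also gets $\chi(M)=\chi(\widehat M)/2=1$, so $M$ is $\R P^2$. This is not needed for the statement.

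\textbf{Step 3: the local equations lift.} The lifted immersion is $\widehat x=x\circ\pi$. Because $\pi$ is a local diffeomorphism, $\widehat x$ is again an immersion, and its induced metric is $\widehat x^*g_{\Sph^N}=\pi^*g=\widehat g$. Every local quantity of $\widehat x$ is the pullback of the corresponding quantity of $x$. This covers the Levi-Civita connection, the curvature $K$, the normal bundle with its connection $\nabla^\perp$, the second fundamental form $A$, and the mean curvature vector $\mathbf H$. The reason is that each of these is computed in a neighborhood $U$ on which $\pi|_U$ is a diffeomorphism onto its image, and there $\widehat x|_U$ is just $x$ after a reparametrization.

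It follows that $\widehat{\mathbf H}=\mathbf H\circ\pi$ and $\widehat\nabla^\perp\widehat{\mathbf H}=\pi^*(\nabla^\perp\mathbf H)$. Hence minimality and the PMC condition hold on $\widehat M$ exactly when they hold on $M$. Likewise $\widehat K=K\circ\pi$ and $|\widehat A|^2=|A|^2\circ\pi$. Since $\pi$ is surjective, the ranges of these functions coincide with those on $M$, so every pointwise pinching inequality is preserved.

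\textbf{Main point.} No step is a real obstacle. The only care needed is in Step 2: one must pass to the oriented cover before applying Gauss--Bonnet, since that is where the orientability required by the genus formula comes from.
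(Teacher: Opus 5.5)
Your proposal is correct. The paper states this lemma as a standard fact without proof. Your argument is the standard justification it relies on: Gauss--Bonnet on the oriented cover gives $\chi(\widehat M)>0$, hence genus zero and $\widehat M\cong\Sph^2$, and $\widehat x=x\circ\pi$ is locally a reparametrization of $x$, so $\widehat K=K\circ\pi$, $\widehat{\mathbf H}=\mathbf H\circ\pi$ and $\widehat\nabla^\perp\widehat{\mathbf H}=\pi^*(\nabla^\perp\mathbf H)$. Your use of the surjectivity of $\pi$ also matches how the paper later transfers the endpoint identities from $\widehat M$ back down to $M$.
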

The following lemma is the coordinate-eigenvalue characterization proved by Takahashi \cite[Theorem 3]{Takahashi}.
\begin{lemma}[\sourcecite{Takahashi}{Theorem 3}]\label{lem:Takahashi}
	The immersion $x\colon M^n\to\Sph^N$ is minimal if and only if
	\begin{equation*}
		-\Delta_gx=nx.
	\end{equation*}
	If in addition $M$ is closed, then $n\in\Spec(-\Delta_g)$.
\end{lemma}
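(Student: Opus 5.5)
The statement just given is Takahashi's coordinate-eigenvalue characterization (Lemma~\ref{lem:Takahashi}). I plan to prove it from the standard identity for the Laplacian of an isometric immersion. Regard $x$ as a map $M\to\R^{N+1}$, so that $x$ is the position vector. Let $\bar\nabla$ be the flat connection of $\R^{N+1}$ and $\widetilde A$ the second fundamental form of $M$ as a submanifold of $\R^{N+1}$. The first step is to compute, in a local orthonormal frame $e_1,\dots,e_n$,
\begin{equation*}
\Delta_g x=\sum_{k}\bigl(\bar\nabla_{e_k}\bar\nabla_{e_k}x-\bar\nabla_{\nabla_{e_k}e_k}x\bigr)=\operatorname{tr}_g\widetilde A=n\widetilde{\mathbf H}.
\end{equation*}
Here $\widetilde{\mathbf H}=\frac1n\operatorname{tr}_g\widetilde A$ is the mean curvature vector of $M$ in $\R^{N+1}$; the normalization matches $\mathbf H=\frac12\operatorname{tr}_gA$ when $n=2$. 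The identity holds because each coordinate function satisfies $\Delta_g x^a=\operatorname{tr}_g\operatorname{Hess}x^a$, and $\operatorname{Hess}$ of the ambient linear coordinates restricted to $M$ equals the normal second fundamental form.

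The second step splits $\widetilde A$ into its part tangent to the sphere and its radial part. Since $\Sph^N$ is totally umbilic in $\R^{N+1}$ with unit normal $x$ and second fundamental form $-\langle X,Y\rangle x$, we get $\widetilde A(X,Y)=A(X,Y)-\langle X,Y\rangle x$. Taking the trace gives
\begin{equation*}
\Delta_gx=n\mathbf H-nx,
\end{equation*}
where $\mathbf H$ is now the mean curvature vector in $\Sph^N$, which is tangent to $\Sph^N$ and hence orthogonal to $x$. Consequently $-\Delta_gx=nx$ holds if and only if $\mathbf H\equiv0$, that is, if and only if $x$ is minimal.

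Finally, assume $M$ is closed and minimal. Then the coordinate functions $x^a$ satisfy $-\Delta_g x^a=nx^a$. They are not all identically zero, because $|x|=1$. Hence some $x^a$ is a nontrivial eigenfunction, and $n\in\Spec(-\Delta_g)$.

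The only genuine point needing care is the sign and normalization conventions: $\Delta_g=\operatorname{div}\nabla$, and the umbilic second fundamental form of $\Sph^N$ carries the factor $-x$. I would verify these by checking the case of a great circle, where $x''=-x$.
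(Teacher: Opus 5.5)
Your proof is correct. The Beltrami identity $\Delta_g x=\operatorname{tr}_g\widetilde A$, combined with the umbilic splitting $\widetilde A=A-\langle\cdot,\cdot\rangle x$, gives $\Delta_g x=n\mathbf H-nx$, and from this both the equivalence and, via a coordinate function that is not identically zero, the spectral statement follow.

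The paper gives no proof of its own and simply cites Takahashi, whose argument is essentially this computation. The paper itself recalls your Step 2 identity, in the form $\Delta_g x=2\mathbf H-2x$, in the proof of Lemma~\ref{lem:pmc-small-sphere}.
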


\begin{lemma}\label{lem:scaling}
	If $\widehat g=cg$ for a constant $c>0$, then
	\begin{equation*}
		\sec_{\widehat g}=c^{-1}\sec_g,\quad\operatorname{Ric}_{\widehat g}=\operatorname{Ric}_g,\quad\lambda_i(\widehat g)=c^{-1}\lambda_i(g),
	\end{equation*}
	where the Ricci tensors are viewed as symmetric $(0,2)$-tensors. In dimension two the first identity reads $K_{\widehat g}=c^{-1}K_g$.
\end{lemma}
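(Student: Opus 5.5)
This is a direct verification. Set $\widehat g=cg$ with $c>0$ constant. The plan is to track how the Levi-Civita connection, the curvature tensors and the Laplacian transform, and then read off each of the three identities.

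\emph{Connection and curvature.} The Koszul formula expresses $2\langle\nabla_XY,Z\rangle$ as a combination of terms linear in the metric. Multiplying $g$ by the constant $c$ multiplies both sides by $c$, so $\widehat\nabla=\nabla$. With the sign convention fixed in Section~\ref{sec:prelim}, the curvature endomorphism $R(X,Y)$ is built only from $\nabla$, hence $\widehat R(X,Y)Z=R(X,Y)Z$.

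\emph{Sectional and Ricci curvature.} For a two-plane spanned by $X,Y$,
\begin{equation*}
	\sec_{\widehat g}(X,Y)=\frac{\widehat g(R(X,Y)Y,X)}{\widehat g(X,X)\widehat g(Y,Y)-\widehat g(X,Y)^2}=\frac{c}{c^2}\sec_g(X,Y)=c^{-1}\sec_g(X,Y).
\end{equation*}
The Ricci tensor is the trace $\operatorname{Ric}(X,Y)=\operatorname{tr}\bigl(Z\mapsto R(Z,X)Y\bigr)$. This trace involves no metric, so $\operatorname{Ric}_{\widehat g}=\operatorname{Ric}_g$ as $(0,2)$-tensors. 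In dimension two the sectional curvature is the Gaussian curvature, which gives $K_{\widehat g}=c^{-1}K_g$.

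\emph{Spectrum.} In dimension $n$ we have $\Delta_g u=\operatorname{div}_g\nabla u$. The gradient scales as $\widehat\nabla u=c^{-1}\nabla u$, while the divergence is unchanged because $\dd\mu_{\widehat g}=c^{n/2}\dd\mu_g$ with $c^{n/2}$ a constant factor. Hence $\Delta_{\widehat g}=c^{-1}\Delta_g$, and the two operators have the same eigenfunctions with eigenvalues multiplied by $c^{-1}$. The ordering and the multiplicities are preserved, so $\lambda_i(\widehat g)=c^{-1}\lambda_i(g)$ for every $i$.

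Alternatively, one can argue through the min-max formula \eqref{eq:minmax-prelim}. The Rayleigh quotient $\int|\nabla u|^2_{\widehat g}\,\dd\mu_{\widehat g}\big/\int|u|^2\,\dd\mu_{\widehat g}$ equals $c^{-1}$ times the quotient for $g$, and the form domain $W^{1,2}$ does not change under scaling. Neither approach has a real obstacle; the only point needing care is confirming that the trace defining $\operatorname{Ric}$ uses no metric, which is where the invariance of the Ricci tensor comes from.
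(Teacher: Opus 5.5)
Your verification is correct. The paper lists this lemma among its standard preliminaries without proof, and your computation is the standard one it relies on: the Koszul formula gives $\widehat\nabla=\nabla$, the factor $c/c^2$ gives the sectional curvature scaling, the trace defining $\operatorname{Ric}$ involves no metric, and $\Delta_{\widehat g}=c^{-1}\Delta_g$ (equivalently, the Rayleigh quotient scales by $c^{-1}$). One notational point: you use $\widehat\nabla$ both for the Levi-Civita connection of $\widehat g$ and for the $\widehat g$-gradient, so write the latter as, e.g., $\operatorname{grad}_{\widehat g}u=c^{-1}\operatorname{grad}_g u$.
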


\begin{lemma}\label{lem:round-spectrum}
	On the unit round $2$-sphere, let $$E_l=\{f\in C^\infty(\Sph^2;\R):-\Delta_{g_{\rd}}f=l(l+1)f\}$$ denote the real degree-$l$ spherical-harmonic eigenspace. Then
	\begin{equation*}
		\Spec(-\Delta_{g_{\rd}})=\{l(l+1):l\in\Nzero\},\quad\dim_{\R}E_l=2l+1.
	\end{equation*}
	Consequently, the degree-$l$ cluster occupies exactly
	\begin{equation*}
		\lambda_{l^2}(g_{\rd})=\lambda_{l^2+1}(g_{\rd})=\cdots=\lambda_{(l+1)^2-1}(g_{\rd})=l(l+1).
	\end{equation*}
\end{lemma}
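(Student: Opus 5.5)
The plan is to prove Lemma \ref{lem:round-spectrum} by the classical harmonic-polynomial argument on $\R^3$, and then read off the indexing from a cumulative dimension count.

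\textbf{Step 1: eigenfunctions from harmonic polynomials.} Let $\mathcal H_l$ be the space of real homogeneous harmonic polynomials of degree $l$ on $\R^3$. For $P\in\mathcal H_l$, write $P(r\omega)=r^lf(\omega)$ with $\omega\in\Sph^2$. In polar coordinates the Euclidean Laplacian is
\begin{equation*}
	\Delta_{\R^3}=\partial_r^2+\frac{2}{r}\partial_r+\frac1{r^2}\Delta_{g_{\rd}}.
\end{equation*}
Applying this to $r^lf$ gives
\begin{equation*}
	0=r^{l-2}\bigl(l(l-1)+2l\bigr)f+r^{l-2}\Delta_{g_{\rd}}f.
\end{equation*}
Hence $-\Delta_{g_{\rd}}f=l(l+1)f$, and restriction to the sphere maps $\mathcal H_l$ into $E_l$. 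This map is injective, since a homogeneous polynomial is determined by its values on $\Sph^2$.

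\textbf{Step 2: completeness.} Restrictions of all polynomials form an algebra that separates points and contains the constants. By Stone--Weierstrass it is dense in $C^0(\Sph^2)$, hence dense in $L^2(\Sph^2)$. Every polynomial restricted to the sphere is a finite sum of restrictions of elements of $\bigoplus_l\mathcal H_l$. This uses the decomposition $\mathcal P_l=\mathcal H_l\oplus|x|^2\mathcal P_{l-2}$, which holds because $\Delta\colon\mathcal P_l\to\mathcal P_{l-2}$ is surjective: its adjoint with respect to the Fischer inner product is multiplication by $|x|^2$, which is injective. Eigenspaces of the self-adjoint operator $-\Delta_{g_{\rd}}$ for distinct eigenvalues are $L^2$-orthogonal, and the spaces $\mathcal H_l|_{\Sph^2}$ already span a dense subspace. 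Therefore no other eigenvalue occurs, and $E_l=\mathcal H_l|_{\Sph^2}$. This gives $\Spec(-\Delta_{g_{\rd}})=\{l(l+1):l\in\Nzero\}$.

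\textbf{Step 3: dimension count.} We have $\dim\mathcal P_l=\binom{l+2}{2}$. The surjectivity of $\Delta$ from Step 2 then gives
\begin{equation*}
	\dim\mathcal H_l=\binom{l+2}{2}-\binom{l}{2}=2l+1.
\end{equation*}

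\textbf{Step 4: indexing.} The eigenvalues $l(l+1)$ are strictly increasing in $l$. The clusters for degrees $0,\dots,l-1$ therefore contain $\sum_{k<l}(2k+1)=l^2$ eigenvalues counted with multiplicity. Consequently the degree-$l$ cluster occupies the indices $l^2,\dots,l^2+2l=(l+1)^2-1$.

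\textbf{Main obstacle.} The only nontrivial point is completeness in Step 2, which rests on the surjectivity of $\Delta$ on polynomials; the Fischer inner product argument handles it. The complex versus real multiplicity issue is immaterial here, since complexification preserves dimensions (compare Remark \ref{rem:complex-multiplicity}).
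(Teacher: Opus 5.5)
Your proposal is correct and follows essentially the same route as the paper. Both arguments restrict homogeneous harmonic polynomials to the sphere, use the polar form of $\Delta_{\R^3}$, count $\dim\mathcal H_l=\binom{l+2}{2}-\binom{l}{2}=2l+1$ via the harmonic decomposition, and obtain the indexing from $\sum_{k<l}(2k+1)=l^2$. The paper simply asserts completeness and the harmonic decomposition as standard, whereas you justify them, using Stone--Weierstrass for completeness and the Fischer inner product for the surjectivity of $\Delta\colon\mathcal P_l\to\mathcal P_{l-2}$.
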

\begin{proof}
	If $P$ is a homogeneous harmonic polynomial of degree $l$ on $\R^3$, the polar-coordinate formula for the Euclidean Laplacian shows that $Y=P|_{\Sph^2}$ satisfies $-\Delta_{g_{\rd}}Y=l(l+1)Y$. The harmonic decomposition of homogeneous polynomials gives
	\begin{equation*}
		\dim_{\R} E_l=\binom{l+2}{2}-\binom l2=2l+1
	\end{equation*}
	and the orthogonal spherical-harmonic decomposition is complete. Since $\sum_{r=0}^{l-1}(2r+1)=l^2$, the first index of the $l$-th cluster is $l^2$, and its last index is $l^2+2l=(l+1)^2-1$.
\end{proof}
We recall the complex-geometric facts used in the kernel computations below. Standard references are \cite{Forster,GriffithsHarris,JostRiemann,Wells}. Let $(\Sigma,g)$ be a connected oriented smooth Riemannian surface. At each point there is a unique endomorphism $J\colon T\Sigma\to T\Sigma$ such that
\begin{equation*}
	J^2=-\Id,\quad g(JX,JY)=g(X,Y),
\end{equation*}
and $(X,JX)$ is positively oriented whenever $X\ne0$. Equivalently, for every local positively oriented orthonormal frame $e_1,e_2$, one has $Je_1=e_2$ and $Je_2=-e_1$. Extend $J$ complex linearly to $T\Sigma_{\C}=T\Sigma\otimes_{\R}\C$ and define
\begin{equation*}
	\Pi^{1,0}=\frac12(\Id-iJ),\quad\Pi^{0,1}=\frac12(\Id+iJ).
\end{equation*}
In real dimension two the almost-complex structure $J$ is automatically integrable. Concretely, every point has an oriented isothermal coordinate chart $(u,v)$ in which
\begin{equation*}
	g=e^{2\varphi}(du^2+dv^2),\quad J\partial_u=\partial_v,
\end{equation*}
for a smooth real function $\varphi$. The complex coordinate $z=u+iv$ is then holomorphic. We use the standard abbreviations $\partial_z, \partial_{\bar z}, dz$ and $d\bar z$. More generally, $d=\partial+\bar\partial$ on complex-valued differential forms, with
\begin{equation*}
	\partial\colon\Omega^{p,q}(\Sigma)\to\Omega^{p+1,q}(\Sigma),\quad\bar\partial\colon\Omega^{p,q}(\Sigma)\to\Omega^{p,q+1}(\Sigma).
\end{equation*}
The canonical line bundle of the Riemann surface is
\begin{equation*}
	\mathcal K_\Sigma=\Lambda^{1,0}T^*\Sigma.
\end{equation*}
Its dual is the holomorphic tangent line bundle, $\mathcal K_\Sigma^*=T^{1,0}\Sigma$. For $m\in\Nzero$ we write $\mathcal K_\Sigma^{-m}=(\mathcal K_\Sigma^*)^{\otimes m}$, with $\mathcal K_\Sigma^0=\Sigma\times\C$. A holomorphic structure on a smooth complex line bundle $E\to\Sigma$ can be described by a first-order operator
\begin{equation*}
	\bar\partial_E\colon\Omega^{0,0}(\Sigma,E)\to\Omega^{0,1}(\Sigma,E)
\end{equation*}
satisfying the Leibniz rule
\begin{equation*}
	\bar\partial_E(f\sigma)=\bar\partial f\otimes\sigma+f\bar\partial_E\sigma
\end{equation*}
for $f\in C^\infty(\Sigma;\C)$ and $\sigma\in C^\infty(E)$, together with $\bar\partial_E^2=0$. A smooth local section $e$ is a holomorphic frame precisely when $e$ is nowhere zero and $\bar\partial_Ee=0$. In such a frame, a section $fe$ is holomorphic exactly when $f$ is holomorphic. The sheaf of local holomorphic sections is denoted by $\mathscr O(E)$. The operator extends to $E$-valued forms by the graded Leibniz rule,
\begin{equation*}
	\bar\partial_E\colon\Omega^{0,q}(\Sigma,E)\to\Omega^{0,q+1}(\Sigma,E).
\end{equation*}
Its cohomology is
\begin{equation*}
	H_{\bar\partial}^{0,q}(\Sigma,E)=\frac{\ker\left\{\bar\partial_E\colon\Omega^{0,q}(E)\to\Omega^{0,q+1}(E)\right\}}{\Ran\left\{\bar\partial_E\colon\Omega^{0,q-1}(E)\to\Omega^{0,q}(E)\right\}}.
\end{equation*}
Because $\Sigma$ has complex dimension one, $\Omega^{0,q}(\Sigma,E)=0$ for $q\ge2$, and the Dolbeault complex reduces to
\begin{equation*}
	0\longrightarrow\Omega^{0,0}(\Sigma,E)\xrightarrow{\ \bar\partial_E\ }\Omega^{0,1}(\Sigma,E)\longrightarrow0.
\end{equation*}
In particular, every $E$-valued $(0,1)$-form is automatically $\bar\partial_E$-closed. We shall use the following standard Dolbeault and Hodge identifications.
\begin{theorem}[\sourcecite{Wells}{Chapters II and IV}]\label{thm:dolbeault-hodge}
	Let $E$ be a Hermitian holomorphic line bundle over a compact Riemann surface.  Then
	\begin{equation*}
		H_{\bar\partial}^{0,q}(\Sigma,E)\simeq H^q(\Sigma,\mathscr O(E)).
	\end{equation*}
	Let $\bar\partial_E^*$ be the Hilbert-space $L^2$ adjoint of $\bar\partial_E$ and
	\begin{equation*}
		\Box_{\bar\partial,E}=\bar\partial_E\bar\partial_E^*+\bar\partial_E^*\bar\partial_E
	\end{equation*}
	be the Dolbeault Laplacian operator. Then every Dolbeault cohomology class has a unique smooth harmonic representative, and
	\begin{equation*}
		H_{\bar\partial}^{0,q}(\Sigma,E)\simeq\ker\bigl(\Box_{\bar\partial,E}|_{\Omega^{0,q}(E)}\bigr).
	\end{equation*}
	For $q=1$, harmonicity is equivalent simply to $\bar\partial_E^*\eta=0$, because $\bar\partial_E\eta=0$ automatically.
\end{theorem}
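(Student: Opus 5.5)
The statement splits into two independent parts: the Dolbeault isomorphism, which is sheaf-theoretic, and the Hodge identification, which is analytic. I would prove them in that order and then deduce the final remark about $q=1$.

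\textbf{Dolbeault isomorphism.} Let $\mathscr A^{0,q}(E)$ be the sheaf of germs of smooth $E$-valued $(0,q)$-forms. I would show that
\begin{equation*}
0\to\mathscr O(E)\to\mathscr A^{0,0}(E)\xrightarrow{\bar\partial_E}\mathscr A^{0,1}(E)\to0
\end{equation*}
is a fine resolution of $\mathscr O(E)$.
\begin{itemize}
\item Exactness at $\mathscr A^{0,0}(E)$ is the definition of $\mathscr O(E)$.
\item Exactness at $\mathscr A^{0,1}(E)$ is the local $\bar\partial$-Poincaré lemma. In a holomorphic frame $e$ and chart $z$, write $\eta=f\,d\bar z\otimes e$ with $f$ smooth. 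Cut $f$ off by a bump function $\chi$ equal to $1$ near the point, and set
\begin{equation*}
u(z)=\frac1{2\pi i}\int_{\C}\frac{\chi f(w)}{w-z}\,dw\wedge d\bar w .
\end{equation*}
Then $u$ is smooth and $\partial_{\bar z}u=f$ near the point. Here one uses the Cauchy--Pompeiu formula and differentiates under the integral after the change of variables $w\mapsto w+z$.
\end{itemize}
The sheaves $\mathscr A^{0,q}(E)$ are fine because they admit smooth partitions of unity. Hence their higher cohomology vanishes, and the abstract de Rham theorem identifies the cohomology of the complex of global sections with $H^q(\Sigma,\mathscr O(E))$. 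On a curve this is only the two cases $q=0,1$. For $q\ge2$ both sides vanish: the left side by dimension, and the right side because the resolution has length one.

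\textbf{Hodge identification.} Fix the Hermitian metric on $E$ and the metric $g$. Let $\bar\partial_E^\dagger$ be the formal adjoint, and let $\bar\partial_E^*$ be the adjoint of the closure on $L^2$; by elliptic regularity these agree on smooth forms. $\Box_{\bar\partial,E}$ is a second-order elliptic operator: on sections its principal symbol is $\tfrac12|\xi|^2$, so $\Box=\tfrac12\nabla^*\nabla+{}$lower order. It is also formally self-adjoint and nonnegative, since
\begin{equation*}
\langle\Box u,u\rangle=\|\bar\partial_E u\|^2+\|\bar\partial_E^* u\|^2 .
\end{equation*}
On the compact surface, Gårding's inequality and Rellich compactness then give the following.
\begin{itemize}
\item $\Box$ is self-adjoint with compact resolvent.
\item $\ker\Box$ is finite-dimensional and consists of smooth forms.
\item There is an $L^2$-orthogonal decomposition $\Omega^{0,q}=\ker\Box\oplus\Box\,\Omega^{0,q}$ via a Green operator $G$, where smoothness of $G$ applied to a smooth form again comes from elliptic regularity.
\end{itemize}
From the nonnegativity identity, $\Box u=0$ if and only if $\bar\partial_E u=0$ and $\bar\partial_E^*u=0$. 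Consequently
\begin{equation*}
\Omega^{0,q}=\ker\Box\oplus\bar\partial_E\Omega^{0,q-1}\oplus\bar\partial_E^*\Omega^{0,q+1},
\end{equation*}
and $\ker\bar\partial_E=\ker\Box\oplus\bar\partial_E\Omega^{0,q-1}$. Each class therefore has exactly one harmonic representative. Uniqueness holds because a harmonic exact form $\bar\partial_E v$ satisfies $\|\bar\partial_E v\|^2=\langle v,\bar\partial_E^*\bar\partial_E v\rangle=0$.

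\textbf{The case $q=1$.} Here $\bar\partial_E\eta=0$ holds for every $\eta$ because $\Omega^{0,2}=0$. So harmonicity reduces to $\bar\partial_E^*\eta=0$, which gives the last sentence of the statement.

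\textbf{Main obstacle.} The analytic core is the elliptic theory: the Gårding estimate, regularity, and the Fredholm decomposition. I would not redo this. I would cite the general elliptic theory for Laplace-type operators on Hermitian bundles over closed manifolds, as in \cite{Wells} and \cite{TaylorPDE}, and only check ellipticity by the explicit symbol computation above. The remaining steps are formal.
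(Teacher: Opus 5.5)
Your proposal is correct and takes essentially the same route as the paper, which gives no proof of its own and instead cites Wells, Chapters II and IV: there, too, the Dolbeault isomorphism comes from the fine resolution $0\to\mathscr O(E)\to\mathscr A^{0,0}(E)\to\mathscr A^{0,1}(E)\to0$ together with the local $\bar\partial$-Poincar\'e lemma and the abstract de Rham theorem, and the harmonic representatives come from elliptic Hodge theory for $\Box_{\bar\partial,E}$, with $\Omega^{0,2}=0$ reducing harmonicity to $\bar\partial_E^*\eta=0$ when $q=1$. One small correction: the Hilbert adjoint agrees with the formal adjoint on smooth forms by integration by parts, not by elliptic regularity; regularity is what you need to identify the domain of $\bar\partial_E^*$ and to show that elements of $\ker\Box_{\bar\partial,E}$ are smooth.
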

We apply them only to $E=\mathcal O(n)$ on $\CP^1$. A Hermitian metric on $E$ is a smoothly varying positive Hermitian inner product on the fibers. A connection $D^E$ is unitary when
\begin{equation*}
	X\langle\sigma,\tau\rangle=\langle D_X^E\sigma,\tau\rangle+\langle\sigma,D_X^E\tau\rangle
\end{equation*}
for real vector fields $X$. Its complexification decomposes by type as $$D^E=(D^E)^{1,0}+(D^E)^{0,1}.$$
\begin{theorem}[\sourcecite{Wells}{Chapter III}]\label{thm:chern-general}
	Every Hermitian holomorphic line bundle $(E,h)$ has a unique unitary connection $D^E$ satisfying
	\begin{equation*}
		(D^E)^{0,1}=\bar\partial_E.
	\end{equation*}
	It is called the Chern connection. If $e$ is a local holomorphic frame and $h_e=|e|_h^2$, then
	\begin{equation*}
		D^E(fe)=\left(df+f\partial\log h_e\right)e.
	\end{equation*}
	The Chern connection on a dual or tensor product is the connection induced from the Chern connections of the factors.
\end{theorem}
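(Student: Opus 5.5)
The plan is to prove uniqueness first by a pointwise linear-algebra argument, then construct the connection locally by the stated formula, and let uniqueness do the gluing and the statement about duals and tensor products.

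\emph{Uniqueness.} Let $D_1,D_2$ be two unitary connections on $(E,h)$ with $(D_j)^{0,1}=\bar\partial_E$. Their difference $\alpha=D_1-D_2$ is $C^\infty$-linear, hence a complex-valued $1$-form, since $\operatorname{End}(E)=\Sigma\times\C$ for a line bundle.
\begin{itemize}
\item Equality of the $(0,1)$-parts forces $\alpha\in\Omega^{1,0}(\Sigma)$.
\item Subtracting the unitarity identities for $D_1$ and $D_2$ gives $\bigl(\alpha(X)+\overline{\alpha(X)}\bigr)\langle\sigma,\tau\rangle=0$ for every real vector field $X$. Hence $\alpha$ takes purely imaginary values on real vectors, i.e.\ $\bar\alpha=-\alpha$.
\item Conjugation exchanges types, so $\bar\alpha$ is of type $(0,1)$. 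Thus $\alpha=-\bar\alpha$ is simultaneously of type $(1,0)$ and of type $(0,1)$, and so $\alpha=0$.
\end{itemize}

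\emph{Local existence and the formula.} On an open set $U$ with a holomorphic frame $e$, define
\begin{equation*}
D^E(fe)=\left(df+f\,\partial\log h_e\right)e .
\end{equation*}
\begin{itemize}
\item The Leibniz rule is immediate.
\item The $(0,1)$-part is $(\bar\partial f)e$. This equals $\bar\partial_E(fe)$, because $\bar\partial_Ee=0$ and $\partial\log h_e$ is of type $(1,0)$.
\item For unitarity, take $\sigma=fe$ and $\tau=f'e$, so $\langle\sigma,\tau\rangle=f\bar f'h_e$. Then
\begin{equation*}
d(f\bar f'h_e)=\bigl(df\,\bar f'+f\,d\bar f'\bigr)h_e+f\bar f'\,(\partial h_e+\bar\partial h_e).
\end{equation*}
Since $h_e$ is real, $\overline{\partial h_e}=\bar\partial h_e$. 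Therefore the right-hand side equals
\begin{equation*}
\langle D^E\sigma,\tau\rangle+\langle\sigma,D^E\tau\rangle ,
\end{equation*}
with the term $f\,\partial\log h_e\,h_e=f\,\partial h_e$ coming from $D^E\sigma$ and its conjugate $\bar f'\,\bar\partial h_e$ coming from $D^E\tau$. Evaluating on real $X$ gives the unitarity identity.
\end{itemize}

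\emph{Gluing.} On an overlap of two frame domains, both local connections are unitary with $(0,1)$-part $\bar\partial_E$. The uniqueness argument is purely local, so it applies on the overlap and shows the two connections agree. Consistency can also be checked directly: if $e'=ue$ with $u$ holomorphic and nonvanishing, then $\partial\log h_{e'}=\partial\log h_e+du/u$. Hence the local connections glue to a global Chern connection, and the formula holds in every holomorphic frame.

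\emph{Duals and tensor products.}
\begin{itemize}
\item The connection induced on $E^*$ or on $E\otimes F$ from unitary connections is unitary for the induced Hermitian metric. This follows from the product rule applied to the metric pairing.
\item The $(0,1)$-part of the induced connection is the induced operator on the product or dual. By definition, this induced operator is the holomorphic structure $\bar\partial$ of $E^*$, respectively $E\otimes F$.
\item By uniqueness, the induced connection is therefore the Chern connection.
\end{itemize}

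The only delicate point is the type and conjugation bookkeeping in the uniqueness step and in the unitarity check, namely that conjugation swaps $(1,0)$ and $(0,1)$ and that $h_e$ is real. Everything else is routine.
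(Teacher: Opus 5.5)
The paper gives no proof of this statement; it cites Wells, Chapter~III, as standard background, so the relevant comparison is with the textbook argument. Your proof is correct and complete.

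The textbook route works entirely in a holomorphic frame. Writing $D^E e=\theta\, e$, the condition $(D^E)^{0,1}=\bar\partial_E$ forces $\theta$ to be of type $(1,0)$, and metric compatibility reads $dh_e=(\theta+\bar\theta)h_e$. Splitting $dh_e=\partial h_e+\bar\partial h_e$ by type then gives $\theta=\partial\log h_e$. This proves uniqueness and derives the formula in one step. The frame-change identity $\partial\log h_{ue}=\partial\log h_e+du/u$ then yields global existence.

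You instead prove uniqueness intrinsically: the difference $1$-form $\alpha$ is both of type $(1,0)$ and satisfies $\bar\alpha=-\alpha$, so it vanishes. You then only need to verify the given formula rather than derive it.

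The frame computation has the advantage of explaining where $\partial\log h_e$ comes from. Your separation has the advantage that gluing and the assertion about duals and tensor products become formal consequences of uniqueness. It also carries over verbatim to higher-rank bundles, with $\alpha$ a skew-Hermitian $\operatorname{End}(E)$-valued $(1,0)$-form.
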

\begin{lemma}[\sourcecite{Wells}{Chapter III}]\label{lem:chern-connection}
	The connection induced by the Levi-Civita connection on $T^{1,0}\Sigma=\mathcal K_\Sigma^*$ is the Chern connection of the Hermitian holomorphic tangent line bundle.  Consequently, the induced connection $\nabla^{(m)}$ on $\mathcal K_\Sigma^{-m}=(T^{1,0}\Sigma)^{\otimes m}$ satisfies
	\begin{equation*}
		\bar\partial_m=\left(\nabla^{(m)}\right)^{0,1}\colon C^\infty(\mathcal K_\Sigma^{-m})\to\Omega^{0,1}(\Sigma,\mathcal K_\Sigma^{-m}).
	\end{equation*}
\end{lemma}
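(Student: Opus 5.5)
Lemma \ref{lem:chern-connection} identifies the Levi-Civita-induced connection on $T^{1,0}\Sigma$ with the Chern connection, i.e.\ the unique unitary connection whose $(0,1)$-part is $\bar\partial$. I plan to verify the two defining properties of the Chern connection from Theorem \ref{thm:chern-general} for the Levi-Civita connection and then invoke uniqueness. First I will check that $T^{1,0}\Sigma$ is preserved by $\nabla$. Since $J$ is parallel on a surface ($\nabla J=0$: $J$ is rotation by $\pi/2$ in the oriented orthonormal frame, and $\nabla$ preserves orthonormality and orientation), $\nabla$ commutes with $J$ and hence with $\Pi^{1,0}=\frac12(\Id-iJ)$. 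So $\nabla$ restricts to a connection on $T^{1,0}\Sigma$. It is unitary for the Hermitian metric $h(Z,W)=g_{\C}(Z,\bar W)$, because $\nabla g=0$ and $\nabla$ commutes with conjugation.

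Second, I will compute the $(0,1)$-part in an isothermal chart $z=u+iv$ with $g=e^{2\varphi}(du^2+dv^2)$. The local frame is $e=\partial_z=\frac12(\partial_u-i\partial_v)$, which is a holomorphic frame of $T^{1,0}\Sigma$ since $z$ is a holomorphic coordinate. Using the Christoffel symbols of a conformal metric, I will show
\begin{equation*}
\nabla_{\partial_{\bar z}}\partial_z=0,\qquad \nabla_{\partial_z}\partial_z=2\partial_z\varphi\,\partial_z.
\end{equation*}
The first identity can be seen without computation: $\nabla_{\partial_{\bar z}}\partial_z=\nabla_{\partial_z}\partial_{\bar z}$ by torsion-freeness. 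The left side lies in $T^{1,0}$ and the right side in $T^{0,1}$, by the first step, so both vanish. Therefore $(\nabla f\partial_z)^{0,1}=\bar\partial f\otimes\partial_z$, which is the local description of $\bar\partial_{T^{1,0}}$. The $(1,0)$-part also agrees with the formula $\partial\log h_e$ of Theorem \ref{thm:chern-general}, since $h_e=|\partial_z|^2=\frac12e^{2\varphi}$. Uniqueness in Theorem \ref{thm:chern-general} then gives the first assertion.

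For $\mathcal K_\Sigma^{-m}=(T^{1,0}\Sigma)^{\otimes m}$, the induced connection $\nabla^{(m)}$ is the tensor-product connection. Theorem \ref{thm:chern-general} says the Chern connection of a tensor product is the one induced from the factors, so $\nabla^{(m)}$ is the Chern connection. Concretely, $\nabla^{(m)}_{\partial_{\bar z}}(\partial_z)^{\otimes m}=0$ by the Leibniz rule, so $(\nabla^{(m)})^{0,1}=\bar\partial_m$. The only step needing care is the type-preservation argument $\nabla J=0$. It is standard, since $J$ is the parallel Riemannian area form raised by the metric, so I expect no real obstacle.
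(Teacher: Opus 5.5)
Your proof is correct. The steps all check out: $\nabla J=0$ gives type preservation and unitarity. Torsion-freeness with $[\partial_z,\partial_{\bar z}]=0$ forces $\nabla_{\partial_{\bar z}}\partial_z=0$, so $\nabla^{0,1}=\bar\partial$ on the holomorphic frame $\partial_z$. Uniqueness in Theorem~\ref{thm:chern-general}, together with the Leibniz rule on $(\partial_z)^{\otimes m}$, then gives both assertions; the $(1,0)$ check with $h_e=\tfrac12 e^{2\varphi}$ is redundant but consistent.

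The paper itself gives no argument here, only the citation to Wells, Chapter III. Your verification is the standard argument that the Levi-Civita connection is the Chern connection for a Kähler metric, specialized to a Riemann surface, and it fills in that citation in the paper's notation without any gap.
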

When $\Sigma$ is topologically a two-sphere, the uniformization theorem \cite{JostRiemann} gives a biholomorphism
\begin{equation*}
	(\Sigma,J)\simeq\CP^1.
\end{equation*}
All dimensions of cohomology groups used below are invariant under biholomorphism. Let
\begin{equation*}
	U_0=\CP^1\setminus\{\infty\},\quad U_\infty=\CP^1\setminus\{0\},
\end{equation*}
with affine coordinates $z$ on $U_0$ and $w=1/z$ on $U_\infty$. For an integer $n$, the holomorphic line bundle $\mathcal O(n)$ is defined by nonvanishing holomorphic frames $e_0$ and $e_\infty$ related on $U_0\cap U_\infty=\C^*$ by
\begin{equation}\label{eq:On-transition}
	e_0=z^{-n}e_\infty.
\end{equation}
With this convention, we have
\begin{equation*}
	\begin{aligned}
		\mathcal O(a)\otimes\mathcal O(b)&\simeq\mathcal O(a+b),\\
		\mathcal O(n)^*&\simeq\mathcal O(-n),\\
		\deg\mathcal O(n)&=n.
	\end{aligned}
\end{equation*}
Since $dz=-w^{-2}dw$ on the overlap, the canonical bundle has transition of degree $-2$ and therefore
\begin{equation*}
	\mathcal K_{\CP^1}\simeq\mathcal O(-2).
\end{equation*}
We shall use the following standard form of Serre duality for holomorphic line bundles over compact Riemann surfaces.
\begin{theorem}[Serre duality \cite{Forster}]\label{thm:serre-duality}
	For a holomorphic line bundle $E$ over a compact Riemann surface,
	\begin{equation*}
		H^1(\Sigma,E)\simeq H^0(\Sigma,\mathcal K_\Sigma\otimes E^*)^*.
	\end{equation*}
	In particular,
	\begin{equation}\label{eq:serre-CP1}
		H^1(\CP^1,\mathcal O(n))\simeq H^0(\CP^1,\mathcal O(-n-2))^*.
	\end{equation}
\end{theorem}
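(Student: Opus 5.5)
The general statement is standard and is cited from \cite{Forster}, so the plan is to recall it briefly and then check the specialization \eqref{eq:serre-CP1} in the notation fixed above. The argument for the general isomorphism runs through the Dolbeault and Hodge identifications of Theorem \ref{thm:dolbeault-hodge}. By that theorem, $H^1(\Sigma,E)\simeq H^{0,1}_{\bar\partial}(\Sigma,E)$, and every class has a unique smooth representative $\eta\in\Omega^{0,1}(\Sigma,E)$ with $\bar\partial_E^*\eta=0$.

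Fix a Hermitian metric $h$ on $E$. Combining complex conjugation of the form part with the conjugate-linear isomorphism $E\to E^*$, $\sigma\mapsto h(\cdot,\sigma)$, gives a conjugate-linear bundle isomorphism
\begin{equation*}
	\bar\star_E\colon\Lambda^{0,1}T^*\Sigma\otimes E\to\Lambda^{1,0}T^*\Sigma\otimes E^*=\mathcal K_\Sigma\otimes E^*.
\end{equation*}
In a local holomorphic frame $e$ of $E$ with $h_e=|e|_h^2$, we have $\bar\partial_E^*(f\,d\bar z\otimes e)=-c\,h_e^{-1}e^{-2\varphi}\partial_z(h_ef)\,e$ for a nonzero constant $c$. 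Hence $\bar\partial_E^*\eta=0$ is exactly the condition that $\bar\star_E\eta=\overline{h_ef}\,dz\otimes e^*$ is holomorphic, since $\partial_{\bar z}\overline{h_ef}=\overline{\partial_z(h_ef)}=0$. This is the one local computation to check carefully, and it is the main technical point. It yields a conjugate-linear bijection between harmonic $E$-valued $(0,1)$-forms and $H^0(\Sigma,\mathcal K_\Sigma\otimes E^*)$.

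A conjugate-linear isomorphism $V\to W$ of finite-dimensional spaces is the same as a linear isomorphism $V\to W^*$ (via $w\mapsto\langle\cdot,\bar\star^{-1} w\rangle_2$). Equivalently, one uses the canonical pairing
\begin{equation*}
	H^1(\Sigma,E)\times H^0(\Sigma,\mathcal K_\Sigma\otimes E^*)\to\C,\qquad([\eta],\omega)\mapsto\int_\Sigma\omega\wedge\eta,
\end{equation*}
with the $E$ and $E^*$ factors contracted. This pairing is well defined on classes by Stokes' theorem, because $\int\omega\wedge\bar\partial_E\sigma=\pm\int\bar\partial(\omega(\sigma))=\pm\int d(\omega(\sigma))=0$ for holomorphic $\omega$. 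It is nondegenerate by the harmonic-representative computation above, since pairing $\eta$ with $\bar\star_E\eta$ gives $\|\eta\|_2^2$ up to a positive constant. This proves $H^1(\Sigma,E)\simeq H^0(\Sigma,\mathcal K_\Sigma\otimes E^*)^*$.

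For the specialization, take $\Sigma=\CP^1$ and $E=\mathcal O(n)$. Use $\mathcal K_{\CP^1}\simeq\mathcal O(-2)$, which was derived from $dz=-w^{-2}dw$ and the transition convention \eqref{eq:On-transition}. Together with $\mathcal O(n)^*\simeq\mathcal O(-n)$ and $\mathcal O(a)\otimes\mathcal O(b)\simeq\mathcal O(a+b)$, this gives $\mathcal K_{\CP^1}\otimes\mathcal O(n)^*\simeq\mathcal O(-n-2)$. Isomorphic holomorphic line bundles have isomorphic spaces of global sections, so \eqref{eq:serre-CP1} follows.

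The only place where care is needed is the sign and degree bookkeeping: one must confirm that with $e_0=z^{-n}e_\infty$ the bundle $\mathcal K_{\CP^1}$ really corresponds to $n=-2$ and not $n=+2$. I would verify this directly from the frames. On the overlap $dz=-w^{-2}dw=-z^{2}dw$, which matches the transition $e_0=z^{-n}e_\infty$ exactly when $n=-2$, up to the harmless sign.
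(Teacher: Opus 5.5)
Your argument is correct, but it is not the route behind the statement in the paper. The paper gives no proof of its own: it cites Forster for the general isomorphism and treats \eqref{eq:serre-CP1} as immediate from $\mathcal K_{\CP^1}\simeq\mathcal O(-2)$, $\mathcal O(n)^*\simeq\mathcal O(-n)$ and $\mathcal O(a)\otimes\mathcal O(b)\simeq\mathcal O(a+b)$. That is exactly your last two paragraphs, and your check that $dz=-z^2\,dw$ corresponds to $n=-2$ is right.

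For the general statement, Forster works with divisors and shows that the residue pairing is perfect, using Mittag-Leffler distributions and the Riemann--Roch theorem, with no metric and no harmonic theory. You instead deduce it from the Hodge theorem already assumed in Theorem~\ref{thm:dolbeault-hodge}, via the conjugate-linear map $\bar\star_E$. Your local computation checks out: for $g=e^{2\varphi}(du^2+dv^2)$ one finds $\bar\partial_E^*(f\,d\bar z\otimes e)=-2h_e^{-1}e^{-2\varphi}\partial_z(h_ef)\,e$, so $c=2$. There is one harmless slip. Since $dz\wedge d\bar z=-2i\,du\wedge dv$, one gets $\int_\Sigma\bar\star_E\eta\wedge\eta=-i\|\eta\|_2^2$, a nonzero but not positive multiple; nonzero is all that nondegeneracy requires.

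Your route gives this paper a shortcut. For $E=L_m$ it identifies the smooth kernel of $\bar\partial_m^*$ conjugate-linearly with $H^0(\Sigma,\mathcal K_\Sigma^{m+1})\simeq H^0(\CP^1,\mathcal O(-2m-2))=0$. Hence $\ker B_m^*=\{0\}$ in Lemma~\ref{lem:kernels} follows directly, without the Dolbeault--sheaf isomorphism, without $H^1$, and without Serre duality. Forster's route, in exchange, gives a canonical metric-free pairing and does not depend on the Hodge theorem for $\Box_{\bar\partial,E}$.
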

We shall use the following standard cohomology formulas for holomorphic line bundles on $\CP^1$, which follow from the Riemann-Roch theorem and Serre duality; see, for example, \cite[Chapter~I]{GriffithsHarris}.
\begin{lemma}[Cohomology on $\CP^1$]\label{lem:CP1-cohomology}
	For every integer $n\geq0$,
	\begin{equation*}
		\dim_\C H^0(\CP^1,\mathcal O(n))=n+1,\qquad H^1(\CP^1,\mathcal O(n))=0.
	\end{equation*}
	Moreover, the first group consists of polynomials of degree at most $n$ in the affine coordinate $z$, interpreted as sections using \eqref{eq:On-transition}.
\end{lemma}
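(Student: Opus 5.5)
The plan is to compute $H^0$ directly from the transition rule \eqref{eq:On-transition}, and then obtain $H^1$ from Serre duality \eqref{eq:serre-CP1} by the same computation in negative degree.

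\emph{Global sections of $\mathcal O(n)$.} A global holomorphic section $\sigma$ of $\mathcal O(n)$ restricts on $U_0$ to $\sigma=f\,e_0$ with $f$ entire in $z$. It restricts on $U_\infty$ to $\sigma=h\,e_\infty$ with $h$ holomorphic in $w$ near $w=0$. On the overlap, \eqref{eq:On-transition} gives
\begin{equation*}
	f(z)\,e_0=f(z)\,z^{-n}e_\infty=f(1/w)\,w^{n}\,e_\infty,
\end{equation*}
so the gluing condition is that $h(w)=w^nf(1/w)$ extends holomorphically across $w=0$.

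Write $f(z)=\sum_{k\ge0}a_kz^k$. Then $w^nf(1/w)=\sum_{k\ge0}a_kw^{n-k}$ is a Laurent series on $\C^*$. It extends holomorphically to $w=0$ exactly when $a_k=0$ for all $k>n$. Equivalently, one can argue with Cauchy estimates: the condition gives $|f(z)|\le C|z|^n$ for large $|z|$, and the extended Liouville theorem makes $f$ a polynomial of degree at most $n$.

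Conversely, every polynomial $f$ of degree at most $n$ defines a global section by the same formulas. Hence $H^0(\CP^1,\mathcal O(n))$ is identified with the polynomials of degree $\le n$ in $z$. The monomials $1,z,\dots,z^n$ form a basis, so the dimension is $n+1$.

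\emph{Vanishing of $H^1$.} By Serre duality \eqref{eq:serre-CP1}, $H^1(\CP^1,\mathcal O(n))\simeq H^0(\CP^1,\mathcal O(-n-2))^*$, so it suffices to show that $\mathcal O(m)$ has no nonzero global sections when $m=-n-2<0$. Repeating the computation above with $m$ in place of $n$, a section corresponds to an entire $f$ such that $w^{m}f(1/w)=\sum_k a_kw^{m-k}$ is holomorphic at $w=0$. Since $m<0$, every exponent $m-k$ is negative, so all $a_k$ must vanish. Alternatively: $f(z)=z^{m}h(1/z)\to0$ as $z\to\infty$, so $f$ is bounded, hence constant by Liouville, hence $0$. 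Therefore $H^0(\CP^1,\mathcal O(-n-2))=0$ and $H^1(\CP^1,\mathcal O(n))=0$.

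There is no serious obstacle in this argument. The only point needing care is the direction of the transition convention: $e_0=z^{-n}e_\infty$ must produce the factor $w^{n}$ rather than $w^{-n}$, since this is what makes $\deg\mathcal O(n)=n$ and yields polynomials rather than zero. The computation above is consistent with the stated identity $\mathcal K_{\CP^1}\simeq\mathcal O(-2)$. Indeed, $dz=-w^{-2}dw$ means the frame $dz$ of $\mathcal K$ satisfies $dz=z^{2}(-dw)$, which matches $e_0=z^{-n}e_\infty$ with $n=-2$.
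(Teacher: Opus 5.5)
Your proposal is correct and follows the paper's proof. You compute $H^0$ from the gluing condition $h(w)=w^nf(1/w)$ coming from \eqref{eq:On-transition}, and you get $H^1=0$ from Serre duality \eqref{eq:serre-CP1}. The only difference is that you check the vanishing of $H^0(\CP^1,\mathcal O(-n-2))$ by the same Laurent-series computation, whereas the paper just cites the standard fact that negative-degree line bundles on $\CP^1$ have no nonzero holomorphic sections.
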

\begin{proof}
	A global holomorphic section is represented by $P(z)e_0$ on $U_0$ and $Q(w)e_\infty$ on $U_\infty$. The transition rule \eqref{eq:On-transition} gives $Q(w)=w^nP(1/w)$, so holomorphy at $w=0$ is equivalent to $P$ being a polynomial of degree at most $n$. This proves the formula for $H^0$. By Serre duality \eqref{eq:serre-CP1},
	\begin{equation*}
		H^1(\CP^1,\mathcal O(n))\simeq H^0(\CP^1,\mathcal O(-n-2))^*.
	\end{equation*}
	A holomorphic line bundle of negative degree on $\CP^1$ has no nonzero holomorphic section, so the latter space vanishes.
\end{proof}

\section{The line-bundle construction and the Dolbeault realization}\label{sec:bundle}

The line-bundle construction used in this section originates in Lin, Wang, and Xu \cite[Sections 7-10]{LinWangXu}. In particular, the bundles and first-order operators are introduced in \cite[Definitions 7.1, 7.4, and 7.5]{LinWangXu}, and the curvature commutator and the resulting spectral ladder are developed there. We give a self-contained presentation because the argument below requires the operators and their identities on a common closed form domain. We also make explicit several standard functional-analytic details concerning the closed realizations and their domains. The additional use made in the present paper is that the same exact form identity can be read in the opposite direction when $K_g\leq1$; this reverse comparison is then applied to the Simon gaps.\par
Let $(\Sigma,g)$ be an oriented smooth Riemannian $2$-sphere, equipped with the complex structure and type decomposition of Section \ref{sec:prelim}. For $m\in\Nzero$ define the Hermitian holomorphic line bundle $L_m$ and its $L^2$ Hilbert space $H_m$ by
\begin{equation*}
	\begin{aligned}
		L_m&=(T^{1,0}\Sigma)^{\otimes m}=\mathcal K_\Sigma^{-m},\\
		L_0&=\Sigma\times\C,\\
		H_m&=L^2(L_m).
	\end{aligned}
\end{equation*}
The exponent $\otimes m$ denotes the $m$-fold complex tensor power, with the zeroth tensor power interpreted as the trivial complex line bundle. The equality with $\mathcal K_\Sigma^{-m}$ is an equality of Hermitian holomorphic line bundles by Lemma \ref{lem:chern-connection}. The Levi-Civita connection preserves $g$ and $J$, and hence induces a unitary connection, denoted $\nabla^{(m)}$, on every $L_m$.\par
Choose a local oriented orthonormal frame $e_1,e_2$ and put
\begin{equation*}
	Z=\frac{e_1-ie_2}{\sqrt2}.
\end{equation*}
Then $Z$ is a local unitary frame of $T^{1,0}\Sigma$, meaning $\langle Z,Z\rangle=1$. Let $d\mu_g=\theta^1\wedge\theta^2$ denote the positively oriented area form, where $\theta^1,\theta^2$ is the coframe dual to $e_1,e_2$. Let the real Levi-Civita connection local $1$-form $\omega\in\Omega^1(\Sigma;\R)$ be defined by
\begin{equation}\label{eq:connection}
	\nabla e_1=\omega\otimes e_2,\quad\nabla e_2=-\omega\otimes e_1.
\end{equation}
Here $\Omega^1(\Sigma;\R)$ is the space of smooth real $1$-forms. Consequently $\nabla Z=i\omega\otimes Z$ and
\begin{equation}\label{eq:curvature-form}
	d\omega=-K_g\,d\mu_g,
\end{equation}
where $d$ is the exterior derivative. Indeed,
\begin{equation*}
	R(e_1,e_2)e_1=d\omega(e_1,e_2)e_2,\quad\langle R(e_1,e_2)e_1,e_2\rangle=-K_g,
\end{equation*}
where the second identity uses the curvature convention of Section \ref{sec:prelim}. For a vector field $X$ and the scalar coefficient $u$ of a local section $uZ^{\otimes m}$, define the scalar covariant derivative $D_X^{(m)}u$ by
\begin{equation*}
	\nabla_X^{(m)}(uZ^{\otimes m})=D_X^{(m)}uZ^{\otimes m},\quad D_X^{(m)}u=X(u)+im\omega(X)u.
\end{equation*}
Thus $D_X^{(m)}$ is the local coefficient expression of the induced connection $\nabla^{(m)}$.\par
We repeat the verification of global well-definedness and the adjoint formula in the sign conventions used here.
\begin{lemma}[\sourcecite{LinWangXu}{Definitions 7.4 and 7.5}]\label{lem:global-B}
	The rule
	\begin{equation}\label{eq:Bm-local}
		B_m(uZ^{\otimes m})=\bigl(D_{e_1}^{(m)}u+iD_{e_2}^{(m)}u\bigr)Z^{\otimes(m+1)}
	\end{equation}
	defines a global first-order differential operator $C^\infty(L_m)\to C^\infty(L_{m+1})$. Its formal $L^2$ adjoint, denoted $B_m^\dagger$ at this stage, is
	\begin{equation}\label{eq:Bmstar-local}
		B_m^\dagger(vZ^{\otimes(m+1)})=-\left(D_{e_1}^{(m+1)}v-iD_{e_2}^{(m+1)}v\right)Z^{\otimes m}.
	\end{equation}
\end{lemma}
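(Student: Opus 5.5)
The proof comes down to two local computations, carried out in the frame $Z=(e_1-ie_2)/\sqrt2$ and the connection form $\omega$ of \eqref{eq:connection}.

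\textbf{Step 1: well-definedness of $B_m$.} We must show that the right-hand side of \eqref{eq:Bm-local} does not depend on the choice of local oriented orthonormal frame. Any other oriented orthonormal frame $e_1',e_2'$ is obtained by a rotation through a smooth angle $\theta$. Then
\begin{equation*}
	Z'=e^{i\theta}Z,\qquad \omega'=\omega+d\theta,
\end{equation*}
the second identity following from $\nabla Z=i\omega\otimes Z$. Since the section is frame-independent, $uZ^{\otimes m}=u'Z'^{\otimes m}$, so $u'=e^{-im\theta}u$. Plugging this in,
\begin{equation*}
	D'^{(m)}u'=d(e^{-im\theta}u)+im(\omega+d\theta)e^{-im\theta}u=e^{-im\theta}D^{(m)}u.
\end{equation*}
Hence $D^{(m)}u$ transforms as a one-form with coefficient weight $e^{-im\theta}$. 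Next, the operator $X\mapsto \alpha(e_1)+i\alpha(e_2)$ applied to a one-form $\alpha$ transforms as
\begin{equation*}
	\alpha(e_1')+i\alpha(e_2')=e^{-i\theta}\bigl(\alpha(e_1)+i\alpha(e_2)\bigr),
\end{equation*}
which is a one-line check from $e_1'+ie_2'=e^{-i\theta}(e_1+ie_2)$. Combining the two factors, the new coefficient is $e^{-i(m+1)\theta}$ times the old one. Since $Z'^{\otimes(m+1)}=e^{i(m+1)\theta}Z^{\otimes(m+1)}$, the product is invariant, so $B_m$ glues to a global first-order differential operator.

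One can also see this intrinsically. Up to the normalization $\sqrt2$, the quantity $D_{e_1}+iD_{e_2}$ is contraction of $\nabla^{(m)}$ with $\bar Z$ (conjugated appropriately). So $B_m$ is $\nabla^{(m)}$ followed by the metric identification $\Lambda^{0,1}\simeq T^{1,0}$. I would mention this as a cross-check but use the explicit computation above as the proof.

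\textbf{Step 2: the adjoint formula.} Take $u\in C^\infty(L_m)$ and $v\in C^\infty(L_{m+1})$ with $u$ supported in a frame chart; a partition of unity reduces the general case to this one, since both sides are local. Because $Z$ is unitary, the pointwise product is
\begin{equation*}
	\langle B_m(uZ^{\otimes m}),vZ^{\otimes(m+1)}\rangle=(D_{e_1}u+iD_{e_2}u)\bar v.
\end{equation*}
Use the product rule
\begin{equation*}
	X(u\bar v)=(D^{(m)}_Xu)\bar v+u\,\overline{D^{(m+1)}_Xv}-i\omega(X)u\bar v,
\end{equation*}
where the extra term appears because the weights are $m$ and $m+1$. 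Define the complex vector field $W=u\bar v(e_1+ie_2)$. Then
\begin{equation*}
	\operatorname{div}W=e_1(u\bar v)+ie_2(u\bar v)+u\bar v\bigl(\operatorname{div}e_1+i\operatorname{div}e_2\bigr).
\end{equation*}
From \eqref{eq:connection} one gets $\operatorname{div}e_1=-\omega(e_2)$ and $\operatorname{div}e_2=\omega(e_1)$, so
\begin{equation*}
	\operatorname{div}e_1+i\operatorname{div}e_2=i\bigl(\omega(e_1)+i\omega(e_2)\bigr).
\end{equation*}
This cancels the $-i\omega$ terms coming from the product rule. What remains is
\begin{equation*}
	(D_{e_1}u+iD_{e_2}u)\bar v=-u\,\overline{(D_{e_1}^{(m+1)}v-iD_{e_2}^{(m+1)}v)}+\operatorname{div}W.
\end{equation*}
Integrating over $\Sigma$ kills the divergence term by Stokes' theorem, which yields \eqref{eq:Bmstar-local}. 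Global well-definedness of $B_m^\dagger$ then follows either from uniqueness of formal adjoints or from the same rotation check as in Step 1.

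\textbf{Main obstacle.} The delicate point is the sign bookkeeping in Step 2, in particular the cancellation between the connection terms and the divergence of the frame. This depends on the orientation convention $\nabla e_1=\omega\otimes e_2$. I would double-check it on the flat case $\omega=0$ and on a single rotation-gauge example before writing it up.
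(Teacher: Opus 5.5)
Your approach is the paper's. The frame-rotation check in Step 1 ($Z'=e^{i\theta}Z$, $\omega'=\omega+d\theta$, $u'=e^{-im\theta}u$, plus the factor $e^{-i\theta}$ from $e_1'+ie_2'=e^{-i\theta}(e_1+ie_2)$) is exactly the paper's well-definedness argument, and it is correct. Your divergence of $W=u\bar v(e_1+ie_2)$ repackages the paper's integration by parts with $(e_j)^\dagger=-e_j-\operatorname{div}e_j$. The adjoint formula you reach is right, but Step 2 as written contains two sign errors that cancel each other.

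\emph{The product rule.} Since $X(u)=D^{(m)}_Xu-im\,\omega(X)u$ and $X(\bar v)=\overline{D^{(m+1)}_Xv}+i(m+1)\,\omega(X)\bar v$, one gets
\[
X(u\bar v)=(D^{(m)}_Xu)\bar v+u\,\overline{D^{(m+1)}_Xv}+i\,\omega(X)u\bar v .
\]
The correction term is $+i\omega(X)u\bar v$, because $-im+i(m+1)=+i$; it is not $-i\omega(X)u\bar v$.

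\emph{The frame divergences.} From \eqref{eq:connection},
\[
\operatorname{div}e_1=\langle\nabla_{e_2}e_1,e_2\rangle=\omega(e_2),\qquad \operatorname{div}e_2=\langle\nabla_{e_1}e_2,e_1\rangle=-\omega(e_1),
\]
which is the paper's \eqref{eq:div-frame}. Hence $\operatorname{div}e_1+i\operatorname{div}e_2=-i\bigl(\omega(e_1)+i\omega(e_2)\bigr)$, the negative of what you wrote.

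With both signs corrected the $\omega$-terms still cancel. You still get $\operatorname{div}W=(D^{(m)}_{e_1}u+iD^{(m)}_{e_2}u)\bar v+u\,\overline{(D^{(m+1)}_{e_1}v-iD^{(m+1)}_{e_2}v)}$, so the conclusion stands once the two intermediate lines are fixed.

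The flat check with $\omega=0$ that you planned cannot detect either error. A rotating frame does. On $\R^2\setminus\{0\}$ take the polar frame $e_1=\partial_r$, $e_2=r^{-1}\partial_\phi$. Then $\omega=d\phi$ and $\operatorname{div}\partial_r=1/r=\omega(e_2)$. For the product rule, take $u=1$ with $m=0$, and let $vZ$ be the constant field $(\partial_x-i\partial_y)/\sqrt2$. Then $v=e^{-i\phi}$, $D^{(1)}v=0$, and $X(u\bar v)=i\omega(X)u\bar v$.
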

\begin{proof}
	Rotate the oriented orthonormal frame through a smooth angle $\alpha$, and denote the quantities in the new frame by primes. Then we have
	\begin{align*}
		e_1'&=e_1\cos\alpha+e_2\sin\alpha,\\
		e_2'&=-e_1\sin\alpha+e_2\cos\alpha,\\
		Z'&=e^{i\alpha}Z,\quad\text{and}\\
		\omega'&=\omega+d\alpha.
	\end{align*}
	If $uZ^{\otimes m}=u'(Z')^{\otimes m}$, then $u'=e^{-im\alpha}u$. Direct substitution gives
	\begin{equation*}
		D_{e_1'}^{\prime(m)}u'+iD_{e_2'}^{\prime(m)}u'=e^{-i(m+1)\alpha}\left(D_{e_1}^{(m)}u+iD_{e_2}^{(m)}u\right).
	\end{equation*}
	The factor cancels the transformation of $(Z')^{\otimes(m+1)}$, proving global well-definedness. Write $\omega_j=\omega(e_j)$. From \eqref{eq:connection},
	\begin{equation}\label{eq:div-frame}
		\operatorname{div}e_1=\omega_2,\qquad\operatorname{div}e_2=-\omega_1.
	\end{equation}
	Regarding $e_j$ as the scalar first-order differential operator $f\mapsto e_j(f)$, integration by parts and $(e_j)^\dagger=-e_j-\operatorname{div}e_j$ give
	\begin{equation*}
		\left(D_{e_1}^{(m)}+iD_{e_2}^{(m)}\right)^\dagger=-\left(D_{e_1}^{(m+1)}-iD_{e_2}^{(m+1)}\right),
	\end{equation*}
	which is \eqref{eq:Bmstar-local}.
\end{proof}

We state the closed-realization facts separately and include the elliptic argument, since the precise domains will be used in passing from the smooth identities to closed quadratic forms.
\begin{lemma}[\sourcecite{LinWangXu}{Definitions 7.4 and 7.5}]\label{lem:closed-realization}
	Initially regard \eqref{eq:Bm-local} as the densely defined operator
	\begin{equation*}
		B_m^{(0)}\colon C^\infty(L_m)\subset H_m\to H_{m+1}.
	\end{equation*}
	It is closable. Its graph closure is denoted again by $B_m$, and its Hilbert-space adjoint is denoted by $B_m^*$. They obey
	\begin{equation*}
		\Dom(B_m)=W^{1,2}(L_m),\quad
		\Dom(B_m^*)=W^{1,2}(L_{m+1}).
	\end{equation*}
	Their graph norms are equivalent to the indicated $W^{1,2}$ norms, and on smooth sections the Hilbert adjoint $B_m^*$ agrees with the differential expression $B_m^\dagger$ in \eqref{eq:Bmstar-local}.
\end{lemma}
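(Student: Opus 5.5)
The plan is to reduce everything to the ellipticity of $B_m$ together with the fact that $B_m^\dagger$ is its formal adjoint, using a G\r{a}rding-type estimate. First I would prove closability. For $u\in C^\infty(L_m)$ and $v\in C^\infty(L_{m+1})$, integration by parts in Lemma \ref{lem:global-B} gives $\langle B_m u,v\rangle_2=\langle u,B_m^\dagger v\rangle_2$. Hence the adjoint of $B_m^{(0)}$ contains $B_m^\dagger$ on $C^\infty(L_{m+1})$ and is densely defined, so $B_m^{(0)}$ is closable.

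Next I would identify the domain of the closure. The key point is a local estimate. In the isothermal chart of Section \ref{sec:prelim}, with $Z$ proportional to $\partial_z$ up to a unitary factor, $B_m$ has the form $u\mapsto c\,e^{-\varphi}(\partial_{\bar z}u+a u)$ with smooth $a$ and nonvanishing $c$. This is a zeroth-order perturbation of a multiple of the Cauchy--Riemann operator. I would then derive, by a partition of unity or globally by a Weitzenb\"ock formula, the estimate
\begin{equation*}
\|\nabla^{(m)}u\|_2^2\leq C\bigl(\|B_m u\|_2^2+\|u\|_2^2\bigr),\qquad u\in C^\infty(L_m).
\end{equation*}
The global route is the cleanest. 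The smooth identity $B_m^\dagger B_m=\tfrac12(\nabla^{(m)})^*\nabla^{(m)}+c_m K_g$ holds for an explicit constant $c_m$; this is the identity the paper develops later, but here only boundedness of $K_g$ matters. Integrating it gives $\|\nabla u\|^2=2\|B_mu\|^2+O(\|u\|^2)$. The reverse bound $\|B_mu\|\le C\|u\|_{W^{1,2}}$ is immediate. Thus the graph norm is equivalent to the $W^{1,2}$ norm on smooth sections. Since smooth sections are dense in $W^{1,2}(L_m)$, the closure has domain exactly $W^{1,2}(L_m)$.

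For the adjoint I would argue the same way with $B_m^\dagger$, which is locally a perturbed $\partial_z$-operator and satisfies the analogous estimate with $B_mB_m^\dagger$. This shows that the closure of $B_m^\dagger|_{C^\infty}$ has domain $W^{1,2}(L_{m+1})$ and that this closure is contained in $B_m^*$. The main obstacle is the reverse inclusion $\Dom(B_m^*)\subset W^{1,2}$. Suppose $v\in L^2$ and $f=B_m^*v\in L^2$. Then $B_m^\dagger v=f$ holds in the distributional sense, and I would invoke elliptic regularity \cite{TaylorPDE}. The operator $B_m^\dagger$ is a first-order elliptic operator between line bundles, because its principal symbol $\xi\mapsto-(\xi_1-i\xi_2)$ is nonvanishing for $\xi\neq0$. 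Interior regularity then gives $v\in W^{1,2}_{\mathrm{loc}}$, and compactness of $\Sigma$ upgrades this to $v\in W^{1,2}(L_{m+1})$.

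If I want to avoid quoting regularity, I would use Friedrichs mollifiers in charts. The commutator of a mollifier with a first-order operator is bounded on $L^2$, and this yields the same conclusion. Finally, on smooth sections $B_m^*$ agrees with $B_m^\dagger$, by the integration-by-parts identity and density. The graph-norm equivalences then follow from the two a priori estimates extended by continuity.
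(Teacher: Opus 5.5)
Your proposal is correct and follows essentially the paper's route. Closability comes from the densely defined formal adjoint. The domain $\Dom(B_m)=W^{1,2}(L_m)$ comes from a global $W^{1,2}$ a priori estimate together with density of smooth sections. The inclusion $\Dom(B_m^*)\subset W^{1,2}(L_{m+1})$ comes from first-order elliptic regularity. The only variation is that you obtain the a priori estimate from a Weitzenb\"ock identity rather than quoting the general elliptic estimate.

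One small slip, which does not affect the argument: with the paper's normalization $B_m=D^{(m)}_{e_1}+iD^{(m)}_{e_2}$, the identity is $B_m^\dagger B_m=(\nabla^{(m)})^*\nabla^{(m)}-mK_g$, consistent with $B_0^*B_0=-\Delta_g$. Hence $\|\nabla^{(m)}u\|_2^2=\|B_mu\|_2^2+m\int_\Sigma K_g|u|^2\,d\mu_g$ with no factor $2$; your $\tfrac12$ belongs to the $\bar\partial_m$ normalization.
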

\begin{proof}
	The formal adjoint in Lemma \ref{lem:global-B} is defined on the dense space $C^\infty(L_{m+1})$, so $B_m^{(0)}$ is closable. For $p\in\Sigma$ and $\xi\in T_p^*\Sigma$, its principal symbol, up to the factor $i$, is multiplication by
	\begin{equation*}
		\xi(e_1)+i\xi(e_2),\quad|\xi(e_1)+i\xi(e_2)|^2=|\xi|_g^2.
	\end{equation*}
	Thus $B_m^{(0)}$ is first-order elliptic. The global elliptic estimate on the closed surface, together with boundedness of its coefficients, gives constants $\gamma_m,\Gamma_m>0$ such that
	\begin{equation*}
		\gamma_m\|u\|_{W^{1,2}}\le\left(\|B_m^{(0)}u\|_2^2+\|u\|_2^2\right)^{1/2}\le\Gamma_m\|u\|_{W^{1,2}}
	\end{equation*}
	for every smooth section. Since $C^\infty(L_m)$ is dense in $W^{1,2}(L_m)$, the completion in the graph norm is exactly $W^{1,2}(L_m)$; this proves the first domain identity. If $v\in\Dom(B_m^*)$, then the distribution $B_m^\dagger v\in L^2(L_m)$.  First-order elliptic regularity \cite{TaylorPDE}, applied in local trivializations, gives $v\in W^{1,2}(L_{m+1})$.  Conversely, if $v\in W^{1,2}(L_{m+1})$, approximate $v$ in $W^{1,2}$ by smooth sections and use the formal integration-by-parts identity to see that $u\mapsto\langle B_m u,v\rangle_2$ is $L^2$-bounded on $\Dom(B_m)$.  Hence $v\in\Dom(B_m^*)$, and $B_m^*v=B_m^\dagger v$ in the sense of distribution. The elliptic estimate for $B_m^\dagger$ gives the adjoint graph-norm equivalence.
\end{proof}

We identify the first-order ladder operator with the Dolbeault operator of $L_m=\mathcal K_\Sigma^{-m}$.  Its kernel and adjoint kernel are then holomorphic invariants of the genus-zero Riemann surface and do not depend on the sign of $K_g-1$. Choose a local oriented orthonormal frame $e_1,e_2$ with dual coframe $\theta^1,\theta^2$, and set
\begin{equation*}
	\zeta=\frac{\theta^1+i\theta^2}{\sqrt2},\qquad\bar\zeta=\frac{\theta^1-i\theta^2}{\sqrt2}.
\end{equation*}
Then $\zeta$ and $\bar\zeta$ are unit covectors of type $(1,0)$ and $(0,1)$, respectively, and $\zeta(Z)=1$ for the unit vector $Z=(e_1-ie_2)/\sqrt2$ introduced in Section \ref{sec:bundle}.\par
The Dolbeault realization of the line-bundle operator is developed in \cite[Section 10]{LinWangXu}. The following lemma records that identification in our notation and normalizations.
\begin{lemma}[\sourcecite{LinWangXu}{Section 10}]\label{lem:dolbeault-identification}
	The rule
	\begin{equation}\label{eq:Im}
		I_m(\bar\zeta\otimes Z^{\otimes m})=Z^{\otimes(m+1)}
	\end{equation}
	defines a global unitary bundle isomorphism
	\begin{equation*}
		I_m\colon\Lambda^{0,1}T^*\Sigma\otimes L_m\to L_{m+1}.
	\end{equation*}
	If $\bar\partial_m$ is the Dolbeault operator of $L_m$, then
	\begin{equation}\label{eq:Bm-Dolbeault}
		B_m=\sqrt{2}I_m\bar\partial_m.
	\end{equation}
\end{lemma}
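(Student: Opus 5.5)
The plan is to check the two claims in Lemma \ref{lem:dolbeault-identification} in local frames: first that $I_m$ is well defined and unitary, then that \eqref{eq:Bm-Dolbeault} holds. Since both sides of \eqref{eq:Bm-Dolbeault} are globally defined first-order operators, a local computation in an oriented orthonormal frame suffices. For the isomorphism, rotate the frame by a smooth angle $\alpha$ exactly as in the proof of Lemma \ref{lem:global-B}. We have $Z'=e^{i\alpha}Z$, and the dual coframe rotates so that $\zeta'=e^{-i\alpha}\zeta$ and $\bar\zeta'=e^{i\alpha}\bar\zeta$. Hence
\begin{equation*}
\bar\zeta'\otimes (Z')^{\otimes m}=e^{i(m+1)\alpha}\,\bar\zeta\otimes Z^{\otimes m}\quad\text{and}\quad (Z')^{\otimes(m+1)}=e^{i(m+1)\alpha}Z^{\otimes(m+1)}.
\end{equation*}
The two transformation factors agree, so the rule \eqref{eq:Im} is frame independent and defines a global bundle map. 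It sends a unit section to a unit section, because $|\bar\zeta|=|Z|=1$ in the induced metrics. Therefore $I_m$ is a unitary isomorphism of line bundles.

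For \eqref{eq:Bm-Dolbeault}, I would use Lemma \ref{lem:chern-connection}, which gives $\bar\partial_m=(\nabla^{(m)})^{0,1}$. For a local section $uZ^{\otimes m}$ this yields
\begin{equation*}
\nabla^{(m)}(uZ^{\otimes m})=\bigl(D^{(m)}_{e_1}u\,\theta^1+D^{(m)}_{e_2}u\,\theta^2\bigr)\otimes Z^{\otimes m}.
\end{equation*}
Next write $\theta^1=(\zeta+\bar\zeta)/\sqrt2$ and $\theta^2=(\zeta-\bar\zeta)/(\sqrt2\, i)$. The $(0,1)$-part is then
\begin{equation*}
\frac{1}{\sqrt2}\bigl(D^{(m)}_{e_1}u+iD^{(m)}_{e_2}u\bigr)\bar\zeta\otimes Z^{\otimes m}.
\end{equation*}
Applying $I_m$ and multiplying by $\sqrt2$ gives exactly the right-hand side of \eqref{eq:Bm-local}. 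This proves the identity on smooth sections.

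Finally, I would extend the identity to closed realizations. Multiplication by the unitary bundle map $I_m$ preserves $W^{1,2}$. Lemma \ref{lem:closed-realization} identifies the closure of $B_m$ with a $W^{1,2}$-domain operator. So the identity extends by density, with $\bar\partial_m$ understood as its closure on $W^{1,2}(L_m)$.

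The only delicate step is bookkeeping of conventions: checking that $\bar\zeta$ is of type $(0,1)$ with respect to the paper's $J$ ($Je_1=e_2$), and that $Z=(e_1-ie_2)/\sqrt2$ is indeed of type $(1,0)$, i.e.\ $JZ=iZ$. We have $JZ=(e_2+ie_1)/\sqrt2=i(e_1-ie_2)/\sqrt2=iZ$, so this holds. The sign of $i$ in the $(0,1)$-projection must then match \eqref{eq:Bm-local}, and I would double-check it by evaluating on $Z$ and $\bar Z$. The $(0,1)$-part pairs with $\bar Z=(e_1+ie_2)/\sqrt2$, giving $(D_{e_1}+iD_{e_2})/\sqrt2$. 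This is consistent with the formula above.
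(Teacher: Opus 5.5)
Your proof is correct and follows essentially the same route as the paper: the frame-rotation check $Z'=e^{i\alpha}Z$, $\bar\zeta'=e^{i\alpha}\bar\zeta$ gives globality and unitarity, and Lemma~\ref{lem:chern-connection} then identifies $\bar\partial_m$ with the $(0,1)$-part of $\nabla^{(m)}$, whose coefficient you compute explicitly. Your type checks for $Z$ and $\bar\zeta$ and the density extension to the closed realizations are details the paper leaves implicit; it uses the latter in the proof of Lemma~\ref{lem:kernels}.
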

\begin{proof}
	Under the frame rotation used above, $Z'=e^{i\alpha}Z$ and $\bar\zeta'=e^{i\alpha}\bar\zeta$. Both sides of \eqref{eq:Im} acquire the same factor $e^{i(m+1)\alpha}$, so $I_m$ is global; the displayed frames are unitary. By Lemma \ref{lem:chern-connection}, the $(0,1)$ part of the induced connection is the Dolbeault operator, and therefore
	\begin{equation*}
		\bar\partial_m(uZ^{\otimes m})=\frac1{\sqrt2}\left(D_{e_1}^{(m)}u+iD_{e_2}^{(m)}u\right)\bar\zeta\otimes Z^{\otimes m}.
	\end{equation*}
	Applying $\sqrt2 I_m$ gives \eqref{eq:Bm-local}.
\end{proof}

We reproduce the curvature-independent kernel calculation using Lemma \ref{lem:CP1-cohomology}, so that both the kernel dimension and the vanishing of the adjoint kernel follow directly from the cohomology of $\mathcal{O}(2m)$.
\begin{lemma}[\sourcecite{LinWangXu}{Lemmas 8.2, 8.3, and 10.2}]\label{lem:kernels}
	For every $m\geq0$ and every smooth metric on $\Sph^2$,
	\begin{equation*}
		\dim_\C\ker B_m=2m+1,\quad\ker B_m^*=\{0\}.
	\end{equation*}
	These dimensions are independent of the curvature of $g$.
\end{lemma}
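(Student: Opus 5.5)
We will compute both kernels by turning them into Dolbeault cohomology of the holomorphic line bundle $L_m=\mathcal K_\Sigma^{-m}$ and then reading off the answer on $\CP^1$ from Lemma \ref{lem:CP1-cohomology}.

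\textbf{Step 1: reduce to cohomology.} By Lemma \ref{lem:dolbeault-identification}, $B_m=\sqrt2 I_m\bar\partial_m$, where $I_m$ is a unitary bundle isomorphism. Hence
\begin{equation*}
\ker B_m=\ker\bar\partial_m,\qquad \ker B_m^*=I_m\bigl(\ker\bar\partial_m^*\bigr).
\end{equation*}
Two regularity points need care here.
\begin{itemize}
\item By Lemma \ref{lem:closed-realization}, an element of $\ker B_m$ lies a priori only in $W^{1,2}$. Elliptic regularity for the first-order elliptic operator $B_m$ makes it smooth.
\item Similarly, $\ker B_m^*\subset W^{1,2}(L_{m+1})$ consists of smooth sections annihilated by the elliptic expression $B_m^\dagger$. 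On these, $B_m^*=B_m^\dagger=\sqrt2\,\bar\partial_m^*I_m^{-1}$.
\end{itemize}
So $\ker B_m$ is the space of global holomorphic sections of $L_m$, which is $H^0(\Sigma,\mathscr O(L_m))$. Likewise $I_m^{-1}\ker B_m^*$ is the space of smooth $L_m$-valued $(0,1)$-forms $\eta$ with $\bar\partial_m^*\eta=0$. By the last clause of Theorem \ref{thm:dolbeault-hodge}, these are exactly the $\Box$-harmonic $(0,1)$-forms. Theorem \ref{thm:dolbeault-hodge} then identifies this space with $H^1(\Sigma,\mathscr O(L_m))$.

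\textbf{Step 2: identify the bundle on $\CP^1$.} Uniformization gives a biholomorphism $(\Sigma,J)\simeq\CP^1$. Under it, $\mathcal K_\Sigma\simeq\mathcal O(-2)$, so
\begin{equation*}
L_m=\mathcal K_\Sigma^{-m}\simeq\mathcal O(2m).
\end{equation*}
Cohomology dimensions are biholomorphic invariants. Also, the Hermitian metric affects neither $\ker\bar\partial_m$ nor, via the Hodge isomorphism, the dimension of the harmonic space. Lemma \ref{lem:CP1-cohomology} with $n=2m\ge0$ then gives
\begin{equation*}
\dim_\C\ker B_m=2m+1,\qquad \ker B_m^*=\{0\}.
\end{equation*}
The metric $g$ and its curvature $K_g$ enter only through the conformal class, which is unique on the sphere. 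This is why the dimensions are independent of the curvature.

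\textbf{Main obstacle.} The hardest part is Step 1 for the adjoint. One must check that the Hilbert-space adjoint $B_m^*$ really corresponds to the $L^2$-adjoint $\bar\partial_m^*$ used in Theorem \ref{thm:dolbeault-hodge}, with matching normalizations. This works because $I_m$ is unitary, so the adjoint factors as $B_m^*=\sqrt2\,\bar\partial_m^*I_m^{-1}$. One must also check that weak kernel elements are smooth, which follows from Lemma \ref{lem:closed-realization} together with elliptic regularity.
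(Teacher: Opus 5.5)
Your proposal is correct and follows essentially the same route as the paper. Both arguments factor $B_m=\sqrt2\,I_m\bar\partial_m$ with $I_m$ unitary, use elliptic regularity to make kernel elements smooth, and identify $\ker B_m$ with $H^0(\CP^1,\mathcal O(2m))$ and $I_m^{-1}\ker B_m^*$ with the Dolbeault-harmonic $(0,1)$-forms, hence with $H^1(\CP^1,\mathcal O(2m))=0$. The paper also remarks that a harmonic form in the zero class equals $\bar\partial_m\sigma$ and so vanishes by integration by parts, but this is only an alternative to the Hodge step you already give.
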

\begin{proof}
	By Lemma \ref{lem:CP1-cohomology}, we have $L_m=\mathcal K_\Sigma^{-m}\simeq\mathcal O(2m)$. Every weak $L^2$ section in $\ker B_m$ is smooth by elliptic regularity. Lemma \ref{lem:dolbeault-identification} therefore gives
	\begin{equation*}
		\ker B_m=H^0(\CP^1,\mathcal O(2m)),
	\end{equation*}
	whose complex dimension is $2m+1$. Now let $\Psi\in\Dom(B_m^*)$ satisfy $B_m^*\Psi=0$ and set $\eta=I_m^{-1}\Psi$. We denote by $\bar\partial_m^*$ the Hilbert-space $L^2$ adjoint of $\bar\partial_m$. Since $I_m$ is a smooth unitary bundle map, \eqref{eq:Bm-Dolbeault} implies, first on smooth sections and then for the Hilbert adjoints,
	\begin{equation*}
		B_m^*=\sqrt2\,\bar\partial_m^*I_m^{-1}.
	\end{equation*}
	First-order elliptic regularity for $B_m^*$ implies that $\Psi$, and hence $\eta$, is smooth. Therefore $\bar\partial_m^*\eta=0$. Because $\Sigma$ has complex dimension one, $\Lambda^{0,2}T^*\Sigma=0$, so $\bar\partial_m\eta=0$ automatically. Thus $\eta$ is a Dolbeault-harmonic $(0,1)$-form. By Theorem \ref{thm:dolbeault-hodge}, its harmonic space is canonically isomorphic to Dolbeault and sheaf cohomology:
	\begin{equation*}
		\eta\in\ker\Box_{\bar\partial,L_m}\simeq H_{\bar\partial}^{0,1}(\Sigma,L_m)\simeq H^1(\CP^1,\mathcal O(2m))=0.
	\end{equation*}
	The last equality is Lemma \ref{lem:CP1-cohomology}, and hence $\eta=0$. Alternatively, using the definition of the zero cohomology class, there is a smooth section $\sigma\in C^\infty(L_m)$ with $\eta=\bar\partial_m\sigma$. Since $\Sigma$ is closed, integration by parts gives
	\begin{equation*}
		\|\eta\|_2^2=\langle\eta,\bar\partial_m\sigma\rangle_2=\langle\bar\partial_m^*\eta,\sigma\rangle_2=0.
	\end{equation*}
	Consequently $\eta=0$, and therefore $\Psi=0$.
\end{proof}

We include the local calculation in order to verify the signs and normalizations used in the subsequent closed-form identity.

\begin{lemma}[\sourcecite{LinWangXu}{Lemma 7.7}]\label{lem:commutator}
	On smooth sections,
	\begin{equation}\label{eq:B0-lap}
		B_0^*B_0=-\Delta_g,
	\end{equation}
	and for every $m\geq0$,
	\begin{equation}\label{eq:operator-ladder}
		B_mB_m^*-B_{m+1}^*B_{m+1}=2(m+1)K_g\quad\text{on }C^\infty(L_{m+1}).
	\end{equation}
\end{lemma}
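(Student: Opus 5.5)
The plan is a pointwise Weitzenböck-type computation in a local oriented orthonormal frame. Both identities are local identities between differential operators on smooth sections, and Lemma \ref{lem:closed-realization} says that $B_m^*$ agrees with the differential expression $B_m^\dagger$ of \eqref{eq:Bmstar-local} on smooth sections. So it suffices to compose the local formulas \eqref{eq:Bm-local} and \eqref{eq:Bmstar-local} and simplify. Throughout I write $D_j^{(m)}=D_{e_j}^{(m)}$ and $\omega_j=\omega(e_j)$.

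The two ingredients I will use are these. First, the bracket of the frame fields, from \eqref{eq:connection} and torsion-freeness:
\begin{equation*}
[e_1,e_2]=\nabla_{e_1}e_2-\nabla_{e_2}e_1=-\omega_1e_1-\omega_2e_2.
\end{equation*}
Second, the curvature of the scalar coefficient connection. Since $D_X^{(m)}=X+im\omega(X)$, we have
\begin{equation*}
D_X^{(m)}D_Y^{(m)}-D_Y^{(m)}D_X^{(m)}-D_{[X,Y]}^{(m)}=im\,d\omega(X,Y),
\end{equation*}
and by \eqref{eq:curvature-form} this equals $-imK_g$ for $(X,Y)=(e_1,e_2)$.

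For \eqref{eq:B0-lap} I take $m=0$, so that $D^{(0)}_j=e_j$ acting on functions, and expand
\begin{equation*}
B_0^*B_0u=-(D_1^{(1)}-iD_2^{(1)})(e_1u+ie_2u).
\end{equation*}
The weight-one terms $i\omega_j$ combine with the cross terms $i(e_1e_2-e_2e_1)u=i[e_1,e_2]u$. The result should be $-(e_1e_1+e_2e_2)u+(\omega_2e_1-\omega_1e_2)u$. By \eqref{eq:div-frame} this is $-\sum_j\bigl(e_je_j u+(\operatorname{div}e_j)\,e_ju\bigr)=-\Delta_gu$. The only check needed is that the imaginary first-order terms cancel exactly.

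For \eqref{eq:operator-ladder} I take $v$ the coefficient of a section of $L_{m+1}$ and compute both compositions:
\begin{align*}
B_mB_m^*v&=-(D_1^{(m)}+iD_2^{(m)})(D_1^{(m+1)}-iD_2^{(m+1)})v,\\
B_{m+1}^*B_{m+1}v&=-(D_1^{(m+2)}-iD_2^{(m+2)})(D_1^{(m+1)}+iD_2^{(m+1)})v.
\end{align*}
There is a subtlety in the first line: after applying $B_m^*$ the coefficient is with respect to $Z^{\otimes m}$, so the outer operator has weight $m$. In the second line the outer operator has weight $m+2$. I will rewrite each outer operator as the weight-$(m+1)$ operator plus a zeroth-order correction. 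Concretely, $D^{(m)}_j=D^{(m+1)}_j-i\omega_j$ and $D^{(m+2)}_j=D^{(m+1)}_j+i\omega_j$. Then both products become
\begin{equation*}
-\bigl((D_1^{(m+1)})^2+(D_2^{(m+1)})^2\bigr)v
\end{equation*}
plus first-order terms involving $\omega_j D_k^{(m+1)}$, plus $\pm i\,[D_1^{(m+1)},D_2^{(m+1)}]v$. The second-order parts are identical and cancel in the difference. The first-order $\omega$-terms should also cancel: each product should reduce to the rough Laplacian $(\nabla^{(m+1)})^*\nabla^{(m+1)}$ plus a commutator term, by the same bookkeeping as in the $m=0$ case. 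What remains is $2i\,[D_1^{(m+1)},D_2^{(m+1)}]v$ corrected by the bracket term. By the curvature identity above, this gives $2i\cdot(-i(m+1)K_g)v=2(m+1)K_g v$.

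The main obstacle is the sign and weight bookkeeping in this last step. In particular I must confirm that the $\omega$-dependent first-order terms coming from the weight shifts exactly reproduce the divergence terms $-\operatorname{div}e_j$, so that both compositions share the same rough Laplacian. I also need the leftover commutator to carry the weight $m+1$ and not $m$ or $m+2$. As a cross-check I would verify the formula with $m=0$ on the round sphere. There $K_g=1$, and the identity gives $B_0B_0^*=B_1^*B_1+2$ on $L_1$. By \eqref{eq:B0-lap} and the fact that $B_0B_0^*$ and $B_0^*B_0$ share their nonzero spectrum, the first nonzero eigenvalue $2$ of $-\Delta_{g_{\rd}}$ must then correspond to $\ker B_1$. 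This is consistent with $\dim_\C\ker B_1=3=\dim_\R E_1$ from Lemmas \ref{lem:kernels} and \ref{lem:round-spectrum}.
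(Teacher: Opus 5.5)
Your proposal is correct and follows essentially the paper's argument, a direct local-frame expansion of both compositions. You work in an arbitrary frame and collect the leftover into $2i\bigl([D_1^{(m+1)},D_2^{(m+1)}]-D^{(m+1)}_{[e_1,e_2]}\bigr)=2(m+1)K_g$, whereas the paper takes a frame geodesic at $p$ so that $\omega(p)=0$ and $[e_1,e_2](p)=0$. In your version the real first-order terms $\omega_2D_1^{(m+1)}-\omega_1D_2^{(m+1)}$ are common to both products and cancel, while the imaginary ones $\pm i(\omega_1D_1^{(m+1)}+\omega_2D_2^{(m+1)})$ do not cancel but double up into the $-D^{(m+1)}_{[e_1,e_2]}$ part of the bracket correction. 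There is one sign slip: in the $m=0$ case the intermediate expression should be $-(e_1e_1+e_2e_2)u-(\omega_2e_1-\omega_1e_2)u$, not $+(\omega_2e_1-\omega_1e_2)u$; your next formula $-\sum_j\bigl(e_je_ju+(\operatorname{div}e_j)e_ju\bigr)$ already has the correct sign.
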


\begin{proof}
	For a function $u$, we have $B_0u=(e_1u+ie_2u)Z$,
	so Lemma \ref{lem:global-B} gives
	\begin{align*}
		B_0^*B_0u&=-\left(D_{e_1}^{(1)}-iD_{e_2}^{(1)}\right)(e_1u+ie_2u)\\
		&=-\left(e_1-ie_2+i\omega_1+\omega_2\right)(e_1u+ie_2u).
	\end{align*}
	Torsion-freeness gives that $[e_1,e_2]=-\omega_1e_1-\omega_2e_2$. Therefore
	\begin{align*}
		\Delta_gu&=e_1e_1u+e_2e_2u+\omega_2e_1u-\omega_1e_2u\\
		&=(e_1-ie_2+i\omega_1+\omega_2)(e_1u+ie_2u),
	\end{align*}
	where the last equality uses \eqref{eq:div-frame}. This proves \eqref{eq:B0-lap} with the stated sign convention. For the commutator, first take $m\geq1$, fix $p\in\Sigma$, and choose the frame geodesic at $p$, meaning $\nabla_{e_i}e_j(p)=0$ for $i,j\in\{1,2\}$. Then $\omega_1(p)=\omega_2(p)=0$, $[e_1,e_2](p)=0$, and
	\begin{equation}\label{eq:domega-point}
		e_1\omega_2(p)-e_2\omega_1(p)=-K_g(p)
	\end{equation}
	by \eqref{eq:curvature-form}. From the two first-order formulas,
	\begin{align*}
		B_m^*B_m(uZ^{\otimes m})&=-\left(D_{e_1}^{(m+1)}-iD_{e_2}^{(m+1)}\right)\left(D_{e_1}^{(m)}+iD_{e_2}^{(m)}\right)u\,Z^{\otimes m},\\
		B_{m-1}B_{m-1}^*(uZ^{\otimes m})&=-\left(D_{e_1}^{(m-1)}+iD_{e_2}^{(m-1)}\right)\left(D_{e_1}^{(m)}-iD_{e_2}^{(m)}\right)u\,Z^{\otimes m}.
	\end{align*}
	Expanding at $p$ and using \eqref{eq:domega-point} gives
	\begin{align*}
		B_m^*B_m(uZ^{\otimes m})&=\left[-e_1e_1u-e_2e_2u-im(e_1\omega_1+e_2\omega_2)u-mK_gu\right]Z^{\otimes m},\\
		B_{m-1}B_{m-1}^*(uZ^{\otimes m})&=\left[-e_1e_1u-e_2e_2u-im(e_1\omega_1+e_2\omega_2)u+mK_gu\right]Z^{\otimes m}.
	\end{align*}
	Subtracting gives $2mK_g$ on $L_m$. Replacing $m$ by $m+1$ proves \eqref{eq:operator-ladder} at the arbitrary point $p$.
\end{proof}

For each $m\in\Nzero$, set
\begin{equation*}
	\alpha_m=m(m+1),
\end{equation*}
and define the sesquilinear forms $a_m$ on $H_m=L^2(L_m)$ and $c_m$ on $H_{m+1}=L^2(L_{m+1})$ by
\begin{align*}
	a_m[u,v]&=\langle B_m u,B_m v\rangle_{2}+\alpha_m\langle u,v\rangle_{2},\quad\Dom(a_m)=W^{1,2}(L_m),\\
	c_m[u,v]&=\langle B_m^*u,B_m^*v\rangle_{2}+\alpha_m\langle u,v\rangle_{2},\quad\Dom(c_m)=W^{1,2}(L_{m+1}).
\end{align*}
Their diagonal values are denoted by $a_m[u]=a_m[u,u]$ and $c_m[u]=c_m[u,u]$. We record the closed realizations, operator domains, and compact-resolvent properties explicitly, since the comparison below will be carried out at the level of closed forms.
\begin{lemma}[\sourcecite{LinWangXu}{Section 9}]\label{lem:closed-forms}
	The forms $a_m,c_m$ are densely defined, nonnegative, and closed. Their self-adjoint operators are
	\begin{equation*}
		A_m=B_m^*B_m+\alpha_m\Id_{H_m},\qquad C_m=B_mB_m^*+\alpha_m\Id_{H_{m+1}},
	\end{equation*}
	with product domains
	\begin{align*}
		\Dom(A_m)&=\{u\in\Dom(B_m):B_m u\in\Dom(B_m^*)\},\\
		\Dom(C_m)&=\{v\in\Dom(B_m^*):B_m^*v\in\Dom(B_m)\}.
	\end{align*}
	Both operators have compact resolvent. In fact, $$\Dom(A_m)=W^{2,2}(L_m),\quad\Dom(C_m)=W^{2,2}(L_{m+1}),$$ and $A_0=-\Delta_g$.
\end{lemma}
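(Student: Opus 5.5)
The plan is to use the standard representation theorem for closed semibounded forms \cite{Kato} together with the closed realizations in Lemma \ref{lem:closed-realization}. First, nonnegativity. For $u\in W^{1,2}(L_m)$,
\begin{equation*}
a_m[u]=\|B_mu\|_2^2+\alpha_m\|u\|_2^2\geq0,
\end{equation*}
and $c_m$ is nonnegative in the same way, since $\alpha_m=m(m+1)\ge0$. Density of the form domains is clear because smooth sections lie in them.

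Next, closedness. By Lemma \ref{lem:closed-realization}, $B_m$ and $B_m^*$ are closed operators with domains $W^{1,2}(L_m)$ and $W^{1,2}(L_{m+1})$. The form norm $a_m[u]+\|u\|_2^2$ is equivalent to the graph norm of $B_m$, hence to the $W^{1,2}$ norm, so the form domain is complete and $a_m$ is closed. The same argument applies to $c_m$.

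Kato's representation theorem for a form $\|Tu\|^2+\alpha\|u\|^2$ with $T$ closed then yields the associated self-adjoint operator $T^*T+\alpha$, with domain $\{u\in\Dom(T):Tu\in\Dom(T^*)\}$; this is von Neumann's theorem. Applying it with $T=B_m$ gives $A_m$. Applying it with $T=B_m^*$, and using $B_m^{**}=B_m$ because $B_m$ is closed, gives $C_m$ with the stated product domains.

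Compact resolvent follows from Rellich compactness \cite{TaylorPDE}. The form domain $W^{1,2}$ embeds compactly into $L^2$ on the closed surface, so the resolvent, which maps $L^2$ boundedly into the form domain equipped with its form norm, is compact.

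The remaining step, and the main point, is the identification of the operator domains with $W^{2,2}$. Take $u\in\Dom(A_m)$. Then $B_mu\in W^{1,2}(L_{m+1})$ and $u\in W^{1,2}$. By Lemma \ref{lem:commutator} and a local computation, $B_m^\dagger B_m$ is a second-order elliptic operator: its principal symbol is $|\xi|_g^2$ times the identity, so it agrees with the rough Laplacian $-(\nabla^{(m)})^*\nabla^{(m)}$ up to lower-order terms. Its distributional value on $u$ is $A_mu-\alpha_mu\in L^2$, so second-order elliptic regularity gives $u\in W^{2,2}$.

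Alternatively, one can work directly with the first-order operator. From $B_mu=w\in W^{1,2}$, local elliptic regularity for the $\bar\partial$-type operator $B_m$ (shifting derivatives by one) gives $u\in W^{2,2}$. Conversely, if $u\in W^{2,2}$, then $B_mu\in W^{1,2}=\Dom(B_m^*)$, so $u\in\Dom(A_m)$. The same reasoning with $B_m^*$, whose symbol is also invertible, gives $\Dom(C_m)=W^{2,2}(L_{m+1})$.

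Finally, for $m=0$ we have $\alpha_0=0$, and \eqref{eq:B0-lap} together with these domain identifications gives $A_0=B_0^*B_0=-\Delta_g$. Both sides are self-adjoint on $W^{2,2}$ and agree on smooth functions, which form a core.
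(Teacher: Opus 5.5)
Your proof is correct and follows essentially the same route as the paper. It gets closedness from the equivalence of the form norms with the $W^{1,2}$ graph norms of Lemma \ref{lem:closed-realization}, uses Kato's representation theorem (you quote it in its von Neumann form for $T^*T$, where the paper checks the product domain directly from the form identity), Rellich compactness for the resolvent, and second-order elliptic regularity for the $W^{2,2}$ domains. The only slip is cosmetic: $B_m^\dagger B_m$ has principal symbol $+|\xi|_g^2$, so to leading order it agrees with $(\nabla^{(m)})^*\nabla^{(m)}$ rather than its negative, and this does not affect the regularity argument.
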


\begin{proof}
	After adding the $L^2$ norm, the form norm of $a_m$ is the graph norm of $B_m$, and the form norm of $c_m$ is the graph norm of $B_m^*$. By Lemma \ref{lem:closed-realization}, both are equivalent to the appropriate $W^{1,2}$ norm and are complete. Thus the forms are closed and densely defined. Kato's first representation theorem for closed semibounded forms \cite[Chapter~VI, Section~2]{Kato} gives the associated self-adjoint operators. To identify the first one, note that $u\in\Dom(A_m)$ exactly when there is $f\in H_m$ such that
	\begin{equation*}
		\langle B_m u,B_m\phi\rangle_2+\alpha_m\langle u,\phi\rangle_2=\langle f,\phi\rangle_2\qquad\text{for every }\phi\in W^{1,2}(L_m).
	\end{equation*}
	This is equivalent to $B_m u\in\Dom(B_m^*)$ and $f=B_m^*B_m u+\alpha_m u$. Hence
	\begin{equation*}
		\Dom(A_m)=\{u\in\Dom(B_m):B_m u\in\Dom(B_m^*)\}.
	\end{equation*}
	The argument for $C_m$ is identical. For every $r\in\Nzero$, Rellich compactness \cite{TaylorPDE} of $W^{1,2}(L_r)\hookrightarrow L^2(L_r)$ makes the relevant form-domain embedding compact; therefore $A_m$ and $C_m$ have compact resolvent. On smooth sections the principal symbols of $B_m^*B_m$ and $B_mB_m^*$ are $|\xi|_g^2$ times the identity on their respective line-bundle fibers, so both differential expressions are strongly elliptic of order two. If $u$ belongs to either product domain, the corresponding second-order expression is in $L^2$ distributionally, and elliptic regularity \cite{TaylorPDE} gives $u\in W^{2,2}$. Conversely, a $W^{2,2}$ section lies in $W^{1,2}$, its first-order image lies in $W^{1,2}$, and hence it belongs to the relevant product domain. Thus the operator domains are exactly the displayed $W^{2,2}$ spaces. Iterated elliptic regularity makes all eigenfunctions smooth. Finally, \eqref{eq:B0-lap} and $\alpha_0=0$ give $A_0=-\Delta_g$.
\end{proof}

We state the following identity on the common closed form domain and spell out the density argument leading to that formulation. The fact that this is an exact identity, rather than merely a one-sided estimate, allows it to be used for either sign of $K_g-1$.
\begin{proposition}[\sourcecite{LinWangXu}{(9.1)}]\label{prop:closed-defect}
	For every $m\geq0$ and $v\in W^{1,2}(L_{m+1})$,
	\begin{equation}\label{eq:closed-defect}
		c_m[v]-a_{m+1}[v]=2(m+1)\int_\Sigma(K_g-1)|v|^2\,d\mu_g.
	\end{equation}
\end{proposition}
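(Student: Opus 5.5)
Since $c_m[v]-a_{m+1}[v]=\|B_m^*v\|_2^2-\|B_{m+1}v\|_2^2+(\alpha_m-\alpha_{m+1})\|v\|_2^2$ and $\alpha_{m+1}-\alpha_m=(m+1)(m+2)-m(m+1)=2(m+1)$, the claim \eqref{eq:closed-defect} is equivalent to
\begin{equation*}
	\|B_m^*v\|_2^2-\|B_{m+1}v\|_2^2=2(m+1)\int_\Sigma K_g|v|^2\,d\mu_g,\qquad v\in W^{1,2}(L_{m+1}).
\end{equation*}
The term $-2(m+1)\|v\|_2^2$ produced by the shift of $\alpha$ is exactly what turns $K_g$ into $K_g-1$. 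So I will first prove this unshifted identity for smooth $v$, and then extend it to the full form domain by density.

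\emph{Smooth case.} For $v\in C^\infty(L_{m+1})$, both $B_m^*v$ and $B_{m+1}v$ are smooth. Since $\Sigma$ is closed, integration by parts (the formal-adjoint relation of Lemma \ref{lem:global-B}, with $B^*=B^\dagger$ on smooth sections by Lemma \ref{lem:closed-realization}) gives
\begin{equation*}
	\|B_m^*v\|_2^2=\langle B_mB_m^*v,v\rangle_2,\qquad \|B_{m+1}v\|_2^2=\langle B_{m+1}^*B_{m+1}v,v\rangle_2 .
\end{equation*}
Subtracting and applying the pointwise operator identity \eqref{eq:operator-ladder} of Lemma \ref{lem:commutator} yields
\begin{equation*}
	\|B_m^*v\|_2^2-\|B_{m+1}v\|_2^2=\bigl\langle 2(m+1)K_gv,v\bigr\rangle_2=2(m+1)\int_\Sigma K_g|v|^2\,d\mu_g .
\end{equation*}

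\emph{Extension to $W^{1,2}$.} Take $v\in W^{1,2}(L_{m+1})$ and smooth $v_k\to v$ in $W^{1,2}$. By Lemma \ref{lem:closed-realization}, $\Dom(B_m^*)=\Dom(B_{m+1})=W^{1,2}(L_{m+1})$, and the graph norms are equivalent to the $W^{1,2}$ norm. Hence $B_m^*v_k\to B_m^*v$ and $B_{m+1}v_k\to B_{m+1}v$ in $L^2$, where the limits are the closed operators. The right-hand side also converges, because $K_g$ is bounded on the compact surface and $v_k\to v$ in $L^2$. Passing to the limit in the smooth identity and reinserting the $\alpha$ terms gives \eqref{eq:closed-defect}.

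The only delicate point is this closure step. One must know that both quadratic forms have the same domain $W^{1,2}$ with norms comparable to the $W^{1,2}$ norm; otherwise approximating in one graph norm would not control the other. Lemma \ref{lem:closed-realization} provides exactly this, so I expect no real obstacle beyond checking the bookkeeping $\alpha_{m+1}-\alpha_m=2(m+1)$ and the sign in \eqref{eq:operator-ladder}.
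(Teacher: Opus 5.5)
Your proposal is correct and follows the paper's proof essentially verbatim. You pair the ladder identity \eqref{eq:operator-ladder} with a smooth section and use $\alpha_{m+1}-\alpha_m=2(m+1)$ to turn $K_g$ into $K_g-1$. You then extend by density, using that both forms have the common domain $W^{1,2}(L_{m+1})$ with graph norms equivalent to the $W^{1,2}$ norm (Lemma \ref{lem:closed-realization}); your remark on why that norm equivalence is needed is exactly what the paper's brief closure step relies on.
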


\begin{proof}
	Pair \eqref{eq:operator-ladder} with a smooth section $v$ and use $\alpha_{m+1}-\alpha_m=2(m+1)$. This proves \eqref{eq:closed-defect} on $C^\infty(L_{m+1})$.  The two forms have the common domain $W^{1,2}(L_{m+1})$, smooth sections are dense there, and multiplication by the smooth function $K_g-1$ is bounded on $L^2$. Passing to a $W^{1,2}$-approximating sequence proves the identity on the full closed form domain.
\end{proof}

The following Hilbert-space pairing is the abstract partner-spectrum principle used in the line-bundle ladder; compare \cite[Lemma 2.2]{LinWangXu}. We include the short proof in order to keep track of the precise zero-based index shift produced by the two kernels.
\begin{lemma}[\sourcecite{LinWangXu}{Lemma 2.2}]\label{lem:partner}
	Let $\mathscr H$ and $\widehat{\mathscr H}$ be complex Hilbert spaces, and let $B\colon\Dom(B)\subset\mathscr H\to\widehat{\mathscr H}$ be a closed densely defined operator.  For every $\mu>0$, the map $B$ is an isomorphism
	\begin{equation*}
		B\colon\ker(B^*B-\mu\Id_{\mathscr H})\to\ker(BB^*-\mu\Id_{\widehat{\mathscr H}}),
	\end{equation*}
	with inverse $\mu^{-1}B^*$. Consequently, for the ladder operators,
	\begin{equation}\label{eq:partner-shift}
		\lambda_{j+2m+1}(A_m)=\lambda_j(C_m),\qquad j,m\geq0.
	\end{equation}
	All eigenvalues in this section are indexed from zero and counted with complex multiplicity.
\end{lemma}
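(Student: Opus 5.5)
The plan has two parts: the abstract intertwining statement, and then the index count that yields \eqref{eq:partner-shift}.

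\emph{Abstract isomorphism.} Let $u\in\ker(B^*B-\mu\Id)$. Then $u\in\Dom(B^*B)$, so $Bu\in\Dom(B^*)$ and $B^*Bu=\mu u$. Since $u\in\Dom(B)$, we get $B^*(Bu)=\mu u\in\Dom(B)$. Hence $Bu\in\Dom(BB^*)$, and
\begin{equation*}
	BB^*(Bu)=B(\mu u)=\mu Bu.
\end{equation*}
So $B$ maps the first kernel into the second. Symmetrically, $B^*$ maps $\ker(BB^*-\mu\Id)$ into $\ker(B^*B-\mu\Id)$; this uses $B^{**}=B$, which holds because $B$ is closed and densely defined. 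The compositions satisfy $\mu^{-1}B^*B=\Id$ and $\mu^{-1}BB^*=\Id$ on the respective kernels. Therefore $B$ is a bijection there with inverse $\mu^{-1}B^*$, and since $\mu>0$ no zero division occurs.

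\emph{Index shift.} Apply this with $B=B_m$, $\mathscr H=H_m$, $\widehat{\mathscr H}=H_{m+1}$. By Lemma \ref{lem:closed-forms}, $A_m-\alpha_m=B_m^*B_m$ and $C_m-\alpha_m=B_mB_m^*$ are nonnegative self-adjoint operators with compact resolvent. Their spectra are therefore discrete with finite multiplicities, and the eigenvalues of $A_m$, $C_m$ are those of $B_m^*B_m$, $B_mB_m^*$ shifted by $\alpha_m$.

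For every $\mu>0$ the abstract statement shows that $\mu$ has the same multiplicity for $B_m^*B_m$ and for $B_mB_m^*$. At $\mu=0$ we have $\ker B_m^*B_m=\ker B_m$, because $\langle B_m^*B_mu,u\rangle=\|B_mu\|^2$; likewise $\ker B_mB_m^*=\ker B_m^*$. By Lemma \ref{lem:kernels} these kernels have dimensions $2m+1$ and $0$. Thus, after sorting with multiplicity, the nondecreasing list of eigenvalues of $B_m^*B_m$ is $2m+1$ zeros followed by exactly the list of eigenvalues of $B_mB_m^*$, all of which are positive. Hence
\begin{equation*}
	\lambda_{j+2m+1}(B_m^*B_m)=\lambda_j(B_mB_m^*),
\end{equation*}
and adding $\alpha_m$ to both sides gives \eqref{eq:partner-shift}.

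The only step requiring care is the domain bookkeeping in the abstract part: one must check that $Bu$ lies in $\Dom(BB^*)$ and not merely formally, and invoke $B^{**}=B$ for the reverse map. Once that is settled, the multiplicity matching follows from the fact that both spectra are pure point spectra with finite multiplicities, so the sorted lists are determined by the multiplicity of each value.
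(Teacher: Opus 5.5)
Your proposal is correct and follows the paper's proof essentially step for step. The same domain chase shows that $B$ and $\mu^{-1}B^*$ are mutually inverse between the $\mu$-eigenspaces of $B^*B$ and $BB^*$. The shift \eqref{eq:partner-shift} then comes from the unpaired block $\ker B_m$, of dimension $2m+1$, set against $\ker B_m^*=\{0\}$ from Lemma \ref{lem:kernels}.

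The only cosmetic difference is your appeal to $B^{**}=B$ for the reverse map. That is valid but not needed: $BB^*$ is the literal composition, so the reverse domain chase goes through directly, as in the paper.
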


\begin{proof}
	If $u\in\Dom(B^*B)$ and $B^*Bu=\mu u$, then $Bu\ne0$, $Bu\in\Dom(B^*)$, and $B^*(Bu)=\mu u\in\Dom(B)$. Thus $Bu\in\Dom(BB^*)$ and $$BB^*(Bu)=\mu Bu.$$ Conversely, if $v\in\Dom(BB^*)$ and $BB^*v=\mu v$, then $B^*v\ne0$, $B^*v\in\Dom(B)$, and $B(B^*v)=\mu v\in\Dom(B^*)$; hence $B^*v\in\Dom(B^*B)$ and $$B^*B(B^*v)=\mu B^*v.$$ These maps are inverse after multiplication by $\mu^{-1}$. Thus the positive spectra of $B_m^*B_m$ and $B_mB_m^*$ agree with multiplicity. The $\alpha_m$-eigenspace of $A_m$ is $\ker B_m$, of dimension $2m+1$, whereas the $\alpha_m$-eigenspace of $C_m$ is $\ker B_m^*=0$ by Lemma \ref{lem:kernels}. Removing the unpaired bottom block from $A_m$ leaves exactly the ordered spectrum of $C_m$, proving \eqref{eq:partner-shift}.
\end{proof}

The strict min-max argument in the first case is the one used by Lin, Wang, and Xu in the rigidity argument of \cite[Section 9]{LinWangXu}. We state it in an abstract form and include both signs, since the second formulation will be used for the reverse comparison when $K_g\leq1$.
\begin{lemma}[\sourcecite{LinWangXu}{Lemma 9.3}]\label{lem:strict-minmax}
	Let $(\mathcal X,g_{\mathcal X})$ be a connected closed Riemannian manifold with volume measure $\dd\mu_{\mathcal X}$, and let $E\to\mathcal X$ be a Hermitian line bundle. Let $P$ and $Q$ be lower-bounded self-adjoint second-order elliptic operators on $L^2(E)$ with smooth coefficients and compact resolvent, and let $p$ and $q$ denote their associated closed sesquilinear forms. Assume $\Dom(p)=\Dom(q)$. Let $V$ be a smooth real-valued function satisfying $V\geq0$ and $V\not\equiv0$.
	\begin{enumerate}
		\item\label{3.91} If $q[u]=p[u]+\int_{\mathcal X}V|u|^2\dd\mu_{\mathcal X}$, then $\lambda_j(Q)>\lambda_j(P)$ for every $j\geq0$.
		\item\label{3.92} If $q[u]=p[u]-\int_{\mathcal X}V|u|^2\dd\mu_{\mathcal X}$, then $\lambda_j(Q)<\lambda_j(P)$ for every $j\geq0$.
	\end{enumerate}
	All eigenvalues in this section are indexed from zero and counted with complex multiplicity.
\end{lemma}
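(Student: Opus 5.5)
We prove part \eqref{3.91}; part \eqref{3.92} follows by exchanging the roles of $P$ and $Q$, since then $p[u]=q[u]+\int V|u|^2$.

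The plan is to first get the non-strict inequality from min-max and then upgrade it to a strict one by a finite-dimensional perturbation argument combined with unique continuation. Because $V\geq0$ and $\Dom(p)=\Dom(q)$, we have $q\geq p$ on the common form domain. The zero-based min-max formula \eqref{eq:minmax-prelim} then gives $\lambda_j(Q)\geq\lambda_j(P)$ for every $j$.

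To get strictness, fix $j$ and argue by contradiction, assuming $\lambda:=\lambda_j(Q)=\lambda_j(P)$. Let $F$ be the span of the first $j+1$ eigenfunctions $\phi_0,\dots,\phi_j$ of $Q$. These are smooth by elliptic regularity and lie in $\Dom(q)=\Dom(p)$, and for every $u\in F$ we have $q[u]\leq\lambda\|u\|_2^2$. Then for $0\neq u\in F$,
\begin{equation*}
p[u]=q[u]-\int V|u|^2\leq\lambda\|u\|_2^2-\int V|u|^2 .
\end{equation*}

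Next, let $W=\{u\in F:\int V|u|^2=0\}$. I claim $W=\{0\}$. If $u\in W$, then $u$ vanishes on the open set $\{V>0\}$, which is nonempty because $V\not\equiv0$. On the other hand $u$ is a finite combination of eigenfunctions of $Q$. To conclude $u\equiv0$, it suffices to show that $u$ is an eigenfunction for a single eigenvalue, and then apply unique continuation \cite{Aronszajn} for the second-order elliptic operator $Q-\lambda_k$ on the connected manifold $\mathcal X$.

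Handling combinations of several eigenvalues is the main obstacle. I would avoid it with a compactness step in place of working on $W$ directly. Since $F$ is finite dimensional and $u\mapsto\int V|u|^2$ is continuous, consider the function
\begin{equation*}
u\mapsto\lambda-\frac{\int V|u|^2}{\|u\|^2}
\end{equation*}
on the unit sphere of $F$; it bounds the Rayleigh quotient of $p$ there. If this bound is strictly less than $\lambda$ everywhere on the sphere, then by compactness its supremum is some $\lambda'<\lambda$. Min-max would then give $\lambda_j(P)\leq\lambda'<\lambda$, a contradiction.

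So everything reduces to showing that no nonzero $u\in F$ has $\int V|u|^2=0$. Rather than attack a general combination, I would refine the choice of test space. Take $F'$ to be the span of $\phi_0,\dots,\phi_{j-1}$ together with the entire $\lambda$-eigenspace of $Q$; this has dimension at least $j+1$. For any $(j+1)$-dimensional subspace $G\subset F'$, min-max for $P$ shows that $\sup_{G}p[u]/\|u\|^2\geq\lambda$.

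Take $u\in G$ attaining this supremum. Then $\lambda\|u\|^2\leq p[u]\leq q[u]-\int V|u|^2\leq\lambda\|u\|^2-\int V|u|^2$, so $\int V|u|^2=0$ and $q[u]=\lambda\|u\|^2$. Since $u\in F'$ and $q\leq\lambda$ on $F'$, with equality only on the $\lambda$-eigenspace component (lower eigenvalues give strict inequality unless the corresponding coefficients vanish, or those eigenvalues equal $\lambda$ and are already in the eigenspace), the equality $q[u]=\lambda\|u\|^2$ forces $u$ to lie in the $\lambda$-eigenspace of $Q$. Thus $Qu=\lambda u$ and $u=0$ on $\{V>0\}$. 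Aronszajn's unique continuation theorem then gives $u\equiv0$, which contradicts $u\neq0$. This proves $\lambda_j(Q)>\lambda_j(P)$, and with it both parts of the lemma.
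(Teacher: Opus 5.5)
Your proof is correct. It takes a somewhat different route from the paper at the one delicate point.

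\textbf{How the paper proceeds.} It proves a stronger intermediate fact: $\int_{\mathcal X}V|u|^2\,d\mu_{\mathcal X}>0$ for \emph{every} nonzero $u$ in the spectral subspace $F_j^Q$, the sum of the eigenspaces of $Q$ with eigenvalues at most $\lambda_j(Q)$. Your $F'$ coincides with $F_j^Q$. If such a $u$ vanished on $\{V>0\}$, then by locality so would $Q^ku$ for all $k$. A Vandermonde system then shows that each eigencomponent of $u$ vanishes there, and unique continuation kills each component separately. Compactness of the unit sphere of $F_j^Q$ then gives some $\delta_j>0$ and the direct bound $\lambda_j(P)\le\lambda_j(Q)-\delta_j$, with no contradiction argument.

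\textbf{How your proof differs.} You start from the non-strict inequality and argue by contradiction, assuming $\lambda_j(P)=\lambda_j(Q)=\lambda$. A maximizer $u$ of the $p$-Rayleigh quotient on a $(j+1)$-dimensional subspace of $F_j^Q$ must saturate the whole chain. This gives both $\int V|u|^2=0$ and $q[u]=\lambda\|u\|_2^2$. The second equality forces $u$ into the single eigenspace $\ker(Q-\lambda)$, so unique continuation applies to one eigenfunction directly. This sidesteps exactly the ``main obstacle'' you identified: you never need to show that a mixture of eigenfunctions with different eigenvalues cannot vanish on an open set, so the Vandermonde step is unnecessary.

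\textbf{Comparison.} Your route is shorter and uses unique continuation only in its simplest form. The paper's route yields the more informative statement that no nonzero element of the low spectral subspace of $Q$ vanishes on $\{V>0\}$, and it proves strictness constructively via $\delta_j$. In your own write-up, the exploratory passage about the set $W$ and the compactness bound plays no role in the final argument and can be deleted.
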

\begin{proof}
	We use the min-max formula \eqref{eq:minmax-prelim}. For \eqref{3.91}, let $F_j^Q$ be the sum of all eigenspaces of $Q$ whose eigenvalues do not exceed $\lambda_j(Q)$.  It is finite-dimensional, has dimension at least $j+1$, and consists of smooth sections. We claim
	\begin{equation}\label{eq:positive-defect}
		\int_{\mathcal X}V|u|^2\dd\mu_{\mathcal X}>0\quad(0\ne u\in F_j^Q).
	\end{equation}
	Indeed, if the integral vanished, then the smooth section $u$ would vanish on the nonempty open set $\Omega=\{x\in\mathcal X:V(x)>0\}$. Let $r$ be the number of distinct eigenvalues of $Q$ occurring in the spectral decomposition of $u$, and write $u=u_1+\cdots+u_r$, where $Qu_a=\mu_au_a$ and the numbers $\mu_1,\ldots,\mu_r$ are pairwise distinct.  Because a smooth section that vanishes on $\Omega$ has all of its derivatives there equal to zero, $Q^k u=0$ on $\Omega$ for $k=0,\ldots,r-1$.  Hence
	\begin{equation*}
		\sum_{a=1}^r\mu_a^k u_a=0\text{ on }\Omega,\quad k=0,\cdots,r-1.
	\end{equation*}
	The $r\times r$ Vandermonde matrix $(\mu_a^k)_{0\leq k\leq r-1,\,1\leq a\leq r}$ is invertible, so each $u_a$ vanishes on $\Omega$. In a local trivialization, the equation $(Q-\mu_a)u_a=0$ is a scalar second-order elliptic equation with smooth complex coefficients; the standard unique-continuation theorem \cite{Aronszajn} therefore gives $u_a\equiv0$.  This contradicts $u\ne0$ and proves \eqref{eq:positive-defect}. The map $u\mapsto\int_{\mathcal X}V|u|^2\dd\mu_{\mathcal X}$ is continuous on the finite-dimensional space $F_j^Q$.  Compactness of its $L^2$ unit sphere therefore supplies $\delta_j>0$ such that
	\begin{equation*}
		\int_{\mathcal X}V|u|^2\dd\mu_{\mathcal X}\geq\delta_j\|u\|_2^2\quad(u\in F_j^Q).
	\end{equation*}
	Choose any $(j+1)$-dimensional subspace $G_j^Q\subset F_j^Q$. Since the Rayleigh quotient of $Q$ is at most $\lambda_j(Q)$ on $F_j^Q$, min-max gives
	\begin{equation*}
		\lambda_j(P)\leq\sup_{0\ne u\in G_j^Q}\frac{p[u]}{\|u\|_2^2}\leq\lambda_j(Q)-\delta_j.
	\end{equation*}
	For \eqref{3.92}, let $F_j^P$ be the sum of the eigenspaces of $P$ with eigenvalues at most $\lambda_j(P)$ and choose a $(j+1)$-dimensional subspace $G_j^P\subset F_j^P$. The same Vandermonde and unique-continuation argument gives a positive constant $\delta_j$, and
	\begin{equation*}
		\lambda_j(Q)\le\sup_{0\ne u\in G_j^P}\frac{q[u]}{\|u\|_2^2}\le\lambda_j(P)-\delta_j.
	\end{equation*}
	This proves both assertions.
\end{proof}

\begin{remark}\label{rem:complex-multiplicity}
	Although the bundle argument is complex, $A_0=-\Delta_g$ is the complexification of the real scalar Laplacian. If $$E_\lambda^\R=\{f\in C^\infty(\Sigma;\R):-\Delta_gf=\lambda f\}$$ is a real eigenspace, its complexification is $E_\lambda^\R\otimes_\R\C$, whose complex dimension equals $\dim_\R E_\lambda^\R$. Thus the ordered indices obtained below are exactly the usual real scalar indices.
\end{remark}

For $K_g\geq1$, the first comparison below, together with its rigidity statement, is due to Lin, Wang, and Xu \cite[Theorem 1.8]{LinWangXu}. Our proof is a self-contained presentation of their line-bundle argument in the notation established above. The comparison for $K_g\leq1$ is not asserted in their theorem; it follows here by applying the same exact form identity with the opposite order. We state the two directions together in order to make their common mechanism explicit.
\begin{theorem}[\sourcecite{LinWangXu}{Theorem 1.8} for \eqref{firstcom}]\label{thm:comparison-expanded}
	Let $g$ be a smooth metric on $S^2$.  For $l\geq1$ and $0\leq q\leq2l$, we have
	\begin{enumerate}
		\item\label{firstcom} if $K_g\geq1$, then $\lambda_{l^2+q}(g)\geq l(l+1)$;
		\item\label{secondcom} if $K_g\leq1$, then $\lambda_{l^2+q}(g)\leq l(l+1)$.
	\end{enumerate}
	Moreover, if equality holds in either situation for some $i_0\ge1$, then $(S^2,g)$ is isometric to $\Sph^2$.
\end{theorem}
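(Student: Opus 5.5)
The plan is to climb the line-bundle ladder from $L_0$ up to $L_l$, comparing at each rung the form $c_m$ with $a_{m+1}$ by Proposition \ref{prop:closed-defect}, and using Lemma \ref{lem:partner} to pass between $a_m$ and $c_m$ with an index shift. Fix $l\geq1$. Since $A_0=-\Delta_g$, Lemma \ref{lem:partner} with $m=0$ gives $\lambda_{j+1}(g)=\lambda_j(C_0)$. Proposition \ref{prop:closed-defect} says $c_0\geq a_1$ on the common domain $W^{1,2}(L_1)$ when $K_g\geq1$, and $c_0\leq a_1$ when $K_g\leq1$. The min-max formula \eqref{eq:minmax-prelim} then yields $\lambda_j(C_0)\geq\lambda_j(A_1)$ or $\leq$, respectively. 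Next, $\lambda_j(A_1)=\lambda_{j-3}(C_1)$ for $j\geq3$ by \eqref{eq:partner-shift}, and $c_1$ is compared with $a_2$ in the same way.

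Iterating, I would prove by induction on $m$ that for $K_g\geq1$
\begin{equation*}
\lambda_{j+m^2}(g)\geq\lambda_j(A_m)\qquad(j\geq0,\ 0\le m),
\end{equation*}
with the reverse inequality for $K_g\leq1$. The index bookkeeping is $\sum_{r=1}^{m}(2(r-1)+1)=m^2$: passing from $A_{r-1}$ to $C_{r-1}$ removes the kernel block of size $2r-1$ (Lemma \ref{lem:kernels}), and then $C_{r-1}$ is compared with $A_r$. Precisely, $\lambda_{j+m^2}(g)=\lambda_{j+m^2-1}(C_0)\gtrless\lambda_{j+m^2-1}(A_1)=\lambda_{j+m^2-4}(C_1)\gtrless\cdots$, until we reach $\lambda_j(A_m)$ after using $C_{m-1}$ versus $A_m$.

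Take $m=l$ and $j=q\in\{0,\dots,2l\}$. The bottom $2l+1$ eigenvalues of $A_l=B_l^*B_l+\alpha_l$ equal $\alpha_l=l(l+1)$, because $\ker B_l$ has dimension $2l+1$ by Lemma \ref{lem:kernels} and $B_l^*B_l\geq0$. This gives $\lambda_{l^2+q}(g)\geq l(l+1)$ when $K_g\geq1$ and $\leq l(l+1)$ when $K_g\leq1$, which proves \eqref{firstcom} and \eqref{secondcom}. The indices match Lemma \ref{lem:round-spectrum}, and Remark \ref{rem:complex-multiplicity} identifies the complex indices with the real ones.

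For rigidity, suppose equality holds at $i_0=l^2+q\geq1$ but $K_g\not\equiv1$. Then $V=\pm2(m+1)(K_g-1)$ is smooth, nonnegative and not identically zero in the respective case. Lemma \ref{lem:strict-minmax} therefore makes every comparison $\lambda_j(C_{m})$ versus $\lambda_j(A_{m+1})$ strict. The chain contains at least one such comparison, since $l\geq1$, so the final inequality is strict, a contradiction. Hence $K_g\equiv1$, and since $S^2$ is simply connected, $(S^2,g)$ is isometric to $\Sph^2$.

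The main obstacle is the careful zero-based index bookkeeping through the partner shifts. In the chain each step must use the correct shift $2m+1$ at rung $m$, and the intermediate indices must stay nonnegative; this holds because $j+m^2-\sum_{r\le k}(2r-1)\geq j\geq0$. One also has to check that Lemma \ref{lem:strict-minmax} applies: $C_m$ and $A_{m+1}$ act on the same Hilbert space $H_{m+1}$, with equal form domains $W^{1,2}(L_{m+1})$, smooth coefficients and compact resolvent (Lemma \ref{lem:closed-forms}).
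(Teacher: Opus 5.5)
Your proposal is correct and follows the paper's proof essentially step by step. Both arguments use the rung comparison $\lambda_{j+2m+1}(A_m)=\lambda_j(C_m)\mathrel{\gtreqless}\lambda_j(A_{m+1})$ obtained from Proposition \ref{prop:closed-defect}, min-max and Lemma \ref{lem:partner}, iterated with total index shift $1+3+\cdots+(2l-1)=l^2$ (the paper's $j_m=l^2-m^2+q$), ending at the $(2l+1)$-fold bottom eigenvalue $\alpha_l$ of $A_l$, with strictness supplied by Lemma \ref{lem:strict-minmax}; your remark that $K_g\equiv1$ on the simply connected $S^2$ gives the isometry with $\Sph^2$ only makes explicit a step the paper leaves implicit.
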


\begin{proof}
	If $K_g\geq1$, \eqref{eq:closed-defect} gives $c_m\geq a_{m+1}$. Min-max and \eqref{eq:partner-shift} yield
	\begin{equation}\label{eq:forward-rung}
		\lambda_{j+2m+1}(A_m)=\lambda_j(C_m)\geq\lambda_j(A_{m+1}).
	\end{equation}
	If $K_g\leq1$, the same exact identity gives $c_m\leq a_{m+1}$ and
	\begin{equation}\label{eq:reverse-rung}
		\lambda_{j+2m+1}(A_m)=\lambda_j(C_m)\leq\lambda_j(A_{m+1}).
	\end{equation}
	When $K_g\not\equiv1$, the functions $2(m+1)(K_g-1)$ in the first case and $2(m+1)(1-K_g)$ in the second are nonnegative and positive on a nonempty open set. Lemma \ref{lem:strict-minmax} makes every applicable rung strict. Fix $l,q$ in the stated range and define
	\begin{equation*}
		j_m=l^2-m^2+q,\quad 0\leq m\leq l.
	\end{equation*}
	Then
	\begin{equation*}
		j_m-j_{m+1}=2m+1,\quad j_{m+1}\geq0.
	\end{equation*}
	Applying \eqref{eq:forward-rung} or \eqref{eq:reverse-rung} at the index $j_{m+1}$ gives
	\begin{equation*}
		\lambda_{j_m}(A_m)=\lambda_{j_{m+1}}(C_m)\mathrel{\gtreqless}\lambda_{j_{m+1}}(A_{m+1}),
	\end{equation*}
	where $\mathrel{\gtreqless}$ means $\geq$ in the case $K_g\geq1$ and $\leq$ in the case $K_g\leq1$. Iteration from $m=0$ through $m=l-1$ gives
	\begin{equation}\label{eq:index-chain}
		\lambda_{l^2+q}(A_0)\mathrel{\gtreqless}\lambda_q(A_l).
	\end{equation}
	Since $B_l^*B_l\geq0$ and $\dim_\C\ker B_l=2l+1$, the indices $q=0,\ldots,2l$ are precisely its zero modes. Hence
	\begin{equation}\label{eq:terminal}
		\lambda_q(A_l)=\alpha_l=l(l+1),\quad0\leq q\leq2l.
	\end{equation}
	Because $A_0=-\Delta_g$, \eqref{eq:index-chain} and \eqref{eq:terminal} prove the theorem.
\end{proof}

By Lemma \ref{lem:round-spectrum}, the index set $\{l^2,\cdots,l^2+2l\}$ is exactly the degree-$l$ round cluster. These blocks, for $l\geq1$, exhaust all positive indices, so Theorem \ref{thm:comparison-expanded} is precisely Theorem \ref{thm:two-sided}.

\section{Proof of Theorem \ref{thm:main}}\label{sec:squeeze}
We first recall Calabi's classification theorem \sourcecite{Calabi}{Theorem 5.2}.
\begin{theorem}[\sourcecite{Calabi}{Theorem 5.2}]\label{calabi}
	Let $M$ be a $2$-sphere with a Riemannian metric with constant curvature $K$, and let $f:M\to\Sph^N(1/r^2)\subset\mathbb{R}^{N+1}$ be an isometric, minimal immersion of $M$ into the sphere of radius $r$, such that the image is not contained in any hyperplane of $\mathbb{R}^{N+1}$. Then
	\begin{enumerate}
		\item there exists an integer $s\ge 1$ such that $N=2s$ and the value of $K$ is uniquely determined at the value $$K=\frac{2}{s(s+1)r^2};$$
		\item the immersion $f$ is uniquely determined up to a rigid rotation of $\mathbb{S}^N$, and the $N+1$ components of the vector $f$ are a suitably normalized basis for the spherical harmonics of order $s$ on $M$.
	\end{enumerate}
\end{theorem}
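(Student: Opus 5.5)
We would prove this classical result by reducing to harmonic analysis on the round sphere, in three stages. First, after rescaling the ambient sphere to radius one (Lemma \ref{lem:scaling}), we may take $r=1$. Since $M$ is a $2$-sphere with constant curvature $K$, Gauss--Bonnet forces $K>0$. The simply connected constant-curvature model then shows that $(M,g)$ is isometric to $\Sph^2(K)$. By Takahashi's theorem (Lemma \ref{lem:Takahashi}), every component of $f$ satisfies $-\Delta_gf=2f$. By Lemma \ref{lem:round-spectrum} and Lemma \ref{lem:scaling}, the spectrum of $\Sph^2(K)$ is $\{Kl(l+1)\}$. The components are nonzero, since $|f|=1$, so $2=Ks(s+1)$ for a unique integer $s\ge1$. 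This gives $K=2/(s(s+1))$, and after undoing the scaling, $K=2/(s(s+1)r^2)$.

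Second, we would locate the components. Each component of $f$ lies in the real eigenspace $E_s$, which has dimension $2s+1$. Fix an $L^2$-orthonormal basis $\phi_0,\dots,\phi_{2s}$ of $E_s$ and let $\Phi=(\phi_0,\dots,\phi_{2s})$. By the addition theorem, $\Phi$ is, after a constant normalization, the standard isometric minimal immersion of $\Sph^2(K)$ into $\Sph^{2s}$. Then $f=L\Phi$ for a linear map $L\colon\R^{2s+1}\to\R^{N+1}$. Linear fullness means that $L$ is surjective, so $N\le 2s$. Both claims of the theorem now amount to the statement that $L^{\mathsf T}L$ is a positive multiple of the identity. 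Indeed, this gives $N=2s$, and it also shows that $L$ is a multiple of an orthogonal map, i.e.\ uniqueness up to a rigid rotation.

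Third, and this is the main obstacle, we would show that $B=L^{\mathsf T}L-c\,\Id=0$ for the right constant $c$. The identities $|f|^2\equiv1$ and $f^*g_{\Sph^N}=g$, compared with the corresponding identities for the normalized $\Phi$, give
\begin{equation*}
	\langle B\Phi,\Phi\rangle\equiv0,\qquad \langle B\,d\Phi,d\Phi\rangle\equiv0 .
\end{equation*}
We first intend to follow Calabi's holomorphic approach. Work in a conformal coordinate $z$ and build the harmonic sequence $\partial^kf$. The differentials $\langle\partial^kf,\partial^kf\rangle\,dz^{2k}$ are holomorphic by minimality and an induction on $k$, so they vanish on the genus-zero surface. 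This makes $f$ totally isotropic. It also shows that the higher osculating spaces have the maximal dimensions $2,2,\dots$, with the norms $|\partial^kf|^2$ determined by $K$ through the Frenet-type structure equations. We would then deduce that the osculating data of $f$ at one point coincide with those of the normalized $\Phi$ up to an orthogonal map. The full frame sequence is determined by these structure equations, so the two immersions would then agree up to a rigid rotation, giving $B=0$.

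As a fallback, we would argue directly. We would differentiate $\langle B\Phi,\Phi\rangle\equiv0$ repeatedly and use the $SO(3)$-equivariance of $\Phi$ to show that the quadratic forms $\langle B\,\nabla^k\Phi,\nabla^k\Phi\rangle$ vanish. At a point, the jets $\nabla^k\Phi$ for $k\le s$ span $\R^{2s+1}$, and combined with the isometry condition this forces $B=0$. Either route yields the identification of the components of $f$ with a normalized basis of the order-$s$ spherical harmonics.
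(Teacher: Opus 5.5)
The paper gives no proof of this statement. It quotes Calabi's Theorem~5.2 and uses it only at the end of the proof of Theorem~\ref{thm:main} to name the endpoint surfaces, so your outline has to be judged on its own.

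Your Stages one and two are correct and economical. Gauss--Bonnet, Lemma~\ref{lem:Takahashi} and the spectrum $\{Kl(l+1)\}$ of $\Sph^2(K)$ give $K=2/(s(s+1))$ and $f=L\Phi$ with $L$ onto. Both conclusions do reduce to $L^{\mathsf T}L=c\,\Id$.

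Your primary route for Stage three is Calabi's own argument and is sound. However, the phrase ``norms determined by $K$'' is where the content lies and should be made explicit, using projected derivatives rather than $\partial^kf$. Set $g=\lambda^2|dz|^2$, $f_0=f$, $f_{k+1}=\partial f_k-(\partial\log|f_k|^2)f_k$ and $\gamma_k=|f_{k+1}|^2/|f_k|^2$. The isotropic sequence satisfies $\bar\partial f_k=-\gamma_{k-1}f_{k-1}$ and the Toda equations $\partial\bar\partial\log|f_k|^2=\gamma_k-\gamma_{k-1}$ for $k\ge1$, while $\gamma_0=\lambda^2/2$ and $\partial\bar\partial\log\gamma_0=-K\gamma_0$. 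For constant $K$ this gives inductively $\gamma_k=\bigl(1-\tfrac{k(k+1)}{2}K\bigr)\gamma_0$. So the sequence stops exactly at $k=s$, which gives $N=2s$ by fullness, and $f$ and $\sqrt c\,\Phi$ satisfy the same linear frame system. An orthogonal map matching the Gram data at one point, together with uniqueness for that system on a stereographic chart, then gives the rigid rotation.

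Your fallback, however, does not close as written. Knowing that each diagonal form $\langle B\nabla^k\Phi,\nabla^k\Phi\rangle$ vanishes and that the jets span $\R^{2s+1}$ gives no control of the cross terms $\langle B\nabla^k\Phi,\nabla^l\Phi\rangle$ with $k\ne l$. For example, the form $2x_1x_2$ vanishes on both coordinate axes of $\R^2$ without being zero. The appeal to $SO(3)$-equivariance does not say how to obtain these cross terms.

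The missing fact is that the rank-one matrices $\Phi(x)\Phi(x)^{\mathsf T}$ span all symmetric matrices, and on $\Sph^2$ this follows from a dimension count:
\begin{enumerate}
\item The map $B\mapsto\langle B\Phi,\Phi\rangle$ sends $\mathrm{Sym}^2(\R^{2s+1})$, of dimension $(s+1)(2s+1)$, equivariantly into $E_0\oplus E_2\oplus\cdots\oplus E_{2s}$, which has the same dimension.
\item Its invariant image contains the square of the zonal harmonic $P_s(\cos\theta)$, whose $P_{2k}$-components are all nonzero for $k\le s$ (the Legendre linearization coefficients of $P_s^2$ are positive).
\item Hence the map is onto, and therefore injective.
\end{enumerate}
Thus $\langle B\Phi,\Phi\rangle\equiv0$ alone forces $B=0$. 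After Stage two, neither the isometry identity nor the holomorphic machinery is needed; this is essentially the do Carmo--Wallach viewpoint mentioned in the introduction.

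The two routes buy different things. The shortcut relies on a dimension coincidence that fails on $\Sph^n$ for $n\ge3$ and $s\ge2$. Calabi's route instead obtains $N=2s$ and the congruence intrinsically from the structure equations, without any representation theory.
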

We now prove a spectral consequence of Theorem \ref{thm:two-sided}.
\begin{proposition}\label{prop:squeeze}
	Let $s\geq1$ and let $g$ be a smooth metric on $\Sph^2$ satisfying
	\begin{equation*}
		\frac1{T(s+1)}\leq K_g\leq\frac1{T(s)}.
	\end{equation*}
	If $2\in\Spec(-\Delta_g)$, then
	\begin{equation*}
		K_g\equiv\frac{1}{T(s)}\quad\text{or}\quad K_g\equiv\frac{1}{T(s+1)}.
	\end{equation*}
\end{proposition}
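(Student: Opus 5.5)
Rescale and compare with Theorem \ref{thm:two-sided} at the two endpoint scales. Put $T=T(s)=s(s+1)/2$ and $T'=T(s+1)=(s+1)(s+2)/2$. We consider the two constant multiples
\begin{equation*}
	g_1=\frac{1}{T}\,g,\qquad g_2=\frac{1}{T'}\,g .
\end{equation*}
By Lemma \ref{lem:scaling}, $K_{g_1}=TK_g\le 1$ and $K_{g_2}=T'K_g\ge1$. Moreover $\lambda_i(g_1)=T\lambda_i(g)$ and $\lambda_i(g_2)=T'\lambda_i(g)$.

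Since $2\in\Spec(-\Delta_g)$, choose an index $i\ge1$ with $\lambda_i(g)=2$; this is possible because $2>0=\lambda_0$. Theorem \ref{thm:two-sided}\eqref{leq} applied to $g_1$ gives
\begin{equation*}
	s(s+1)=2T=\lambda_i(g_1)\le\lambda_i(g_{\rd}).
\end{equation*}
Theorem \ref{thm:two-sided}\eqref{geq} applied to $g_2$ gives
\begin{equation*}
	\lambda_i(g_{\rd})\le\lambda_i(g_2)=2T'=(s+1)(s+2).
\end{equation*}
By Lemma \ref{lem:round-spectrum}, $\lambda_i(g_{\rd})=l(l+1)$ for the unique $l\ge1$ with $l^2\le i\le l^2+2l$. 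The squeeze $s(s+1)\le l(l+1)\le(s+1)(s+2)$ forces $l\in\{s,s+1\}$. It seems that the single index cannot give more than this, so the argument has to split into the two cases.

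If $l=s$, then equality holds in the first inequality: $\lambda_i(g_1)=\lambda_i(g_{\rd})$ with $i\ge1$. The rigidity part of Theorem \ref{thm:two-sided} says $(S^2,g_1)$ is isometric to $\Sph^2$, so $K_{g_1}\equiv1$, i.e.\ $K_g\equiv 1/T(s)$. If $l=s+1$, then equality holds in the second inequality. Rigidity applied to $g_2$ gives $K_{g_2}\equiv1$, i.e.\ $K_g\equiv 1/T(s+1)$.

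The only delicate point is the index bookkeeping. The same index $i$ must be used in both comparisons, which works because Theorem \ref{thm:two-sided} compares ordered eigenvalues index by index. The clusters of Lemma \ref{lem:round-spectrum} then pin down $l$, and the strictness in rigidity (Lemma \ref{lem:strict-minmax}) turns the endpoint equality into constant curvature. The metric lives on $\Sph^2$ as required, and no orientation issue arises there.
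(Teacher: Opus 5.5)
Your proof is correct and uses the same ingredients as the paper: rescaling $g$ by $1/T(s)$ and $1/T(s+1)$, applying both directions of Theorem \ref{thm:two-sided} with its rigidity clause, and locating round clusters via Lemma \ref{lem:round-spectrum}. The difference is only organizational: the paper argues by contradiction, using strictness at the cluster-boundary indices $(s+1)^2-1$ and $(s+1)^2$ to get $\lambda_{(s+1)^2-1}(g)<2<\lambda_{(s+1)^2}(g)$, whereas you work directly at the index $i$ with $\lambda_i(g)=2$ and invoke the equality case, which is the contrapositive of that strictness.
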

\begin{proof}
	Assume that neither endpoint identity holds. Scale at the upper endpoint:
	\begin{equation*}
		g_{\mathrm{top}}=\frac1{T(s)}g.
	\end{equation*}
	By Lemma \ref{lem:scaling}, $K_{g_{\mathrm{top}}}=T(s)K_g\leq1$ and $\lambda_i(g_{\mathrm{top}})=T(s)\lambda_i(g)$. The curvature is not identically one. Define
	\begin{equation*}
		J_s=(s+1)^2.
	\end{equation*}
	Then $J_s-1=(s+1)^2-1$ is the last index in the round degree-$s$ cluster, while $J_s$ is the first position in the degree-$(s+1)$ cluster. The corresponding round eigenvalue is $s(s+1)=2T(s)$. Then Theorem \ref{thm:comparison-expanded} gives
	\begin{equation*}
		T(s)\lambda_{J_s-1}(g)=\lambda_{J_s-1}(g_{\mathrm{top}})<2T(s),
	\end{equation*}
	so
	\begin{equation*}
		\lambda_{J_s-1}(g)<2.
	\end{equation*}
	At the lower endpoint put
	\begin{equation*}
		g_{\mathrm{bot}}=\frac1{T(s+1)}g.
	\end{equation*}
	Then $K_{g_{\mathrm{bot}}}=T(s+1)K_g\geq1$, it is not identically one, and $\lambda_i(g_{\mathrm{bot}})=T(s+1)\lambda_i(g)$. The first position in the round degree-$(s+1)$ cluster is $J_s$, and its value is $(s+1)(s+2)=2T(s+1)$. The strict forward comparison gives
	\begin{equation*}
		T(s+1)\lambda_{J_s}(g)=\lambda_{J_s}(g_{\mathrm{bot}})>2T(s+1),
	\end{equation*}
	hence
	\begin{equation*}
		\lambda_{J_s}(g)>2.
	\end{equation*}
	Together,
	\begin{equation*}
		\lambda_{(s+1)^2-1}(g)<2<\lambda_{(s+1)^2}(g).
	\end{equation*}
	Every eigenvalue with index at most $(s+1)^2-1$ is below $2$, whereas every eigenvalue with index at least $(s+1)^2$ is above $2$. Thus $2\notin\Spec(-\Delta_g)$, a contradiction.
\end{proof}

\begin{proof}[\textbf{Proof of Theorem \ref{thm:main}}]
	Let $\pi\colon\widehat M\to M$ be the oriented cover from Lemma \ref{lem:cover}. The pinching lower bound is positive, so $\widehat M\cong\Sph^2$. Write $\widehat g=\pi^*g$ and lift the immersion. The lifted curvature satisfies
	\begin{equation*}
		\frac1{T(s+1)}\leq K_{\widehat g}\leq\frac1{T(s)}.
	\end{equation*}
	By Lemma \ref{lem:Takahashi}, $2\in\Spec(-\Delta_{\widehat g})$. Proposition \ref{prop:squeeze} therefore forces
	\begin{equation*}
		K_{\widehat g}\equiv T(s)^{-1}\quad\text{or}\quad K_{\widehat g}\equiv T(s+1)^{-1}.
	\end{equation*}
	Since $\pi$ is surjective and curvature pulls back, the same endpoint identity holds on $M$. Thus, by Theorem \ref{calabi}, the lifted immersion on the oriented double cover is one of Calabi's $2$-spheres. The dimension of the ambient space $N=2s+2$ or $N=2s$, respectively, if the immersion is linearly full.
\end{proof}

\section{Proof of Theorem \ref{thm:pmc-gaps}}
\label{sec:pmc-extension}

We now prove Theorem \ref{thm:pmc-gaps}. Yau's classification theorem \cite[Theorem 4]{YauPMC} reduces the case $H>0$ to a minimal immersion in a small round sphere, except for a three-dimensional CMC branch treated by the Hopf theorem.

\begin{lemma}\label{lem:pmc-gauss}
	For a surface in the unit sphere,
	\begin{equation}\label{eq:pmc-gauss}
		2K=2+4H^2-|A|^2=2\kappa-|A^\circ|^2,\quad|A|^2=|A^\circ|^2+2H^2.
	\end{equation}
\end{lemma}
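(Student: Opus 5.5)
The plan is to prove the two identities by pointwise linear algebra in an adapted frame, using the Gauss equation for a surface in the unit sphere. Fix a point, a local orthonormal tangent frame $e_1,e_2$, and write $A_{ij}=A(e_i,e_j)$, which are normal vectors. Then
\[
\mathbf H=\tfrac12(A_{11}+A_{22}),\qquad |A|^2=|A_{11}|^2+2|A_{12}|^2+|A_{22}|^2 .
\]
The Gauss equation for a surface in $\Sph^N$ (ambient sectional curvature $1$) gives
\[
K=1+\langle A_{11},A_{22}\rangle-|A_{12}|^2 .
\]

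First derive the left-hand identity $2K=2+4H^2-|A|^2$. Use the polarization
\[
4H^2=|A_{11}+A_{22}|^2=|A_{11}|^2+|A_{22}|^2+2\langle A_{11},A_{22}\rangle .
\]
Then
\[
4H^2-|A|^2=2\langle A_{11},A_{22}\rangle-2|A_{12}|^2=2(K-1).
\]
This is exactly the first equality, and it generalizes Lemma~\ref{lem:gauss}, which is the case $\mathbf H=0$.

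Next compute $|A^\circ|^2$. Since $A^\circ=A-g\otimes\mathbf H$ and $\operatorname{tr}_g A=2\mathbf H$, expand
\[
|A^\circ|^2=|A|^2-2\langle A,g\otimes\mathbf H\rangle+|g\otimes\mathbf H|^2 .
\]
Here $\langle A,g\otimes\mathbf H\rangle=\langle\operatorname{tr}_gA,\mathbf H\rangle=2H^2$ and $|g\otimes\mathbf H|^2=|g|^2H^2=2H^2$. Hence $|A^\circ|^2=|A|^2-2H^2$, which is the second displayed relation.

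Finally, substitute $|A|^2=|A^\circ|^2+2H^2$ into the first identity:
\[
2K=2+2H^2-|A^\circ|^2=2\kappa-|A^\circ|^2 .
\]
There is no real obstacle here. The only point needing care is the sign and normalization conventions: $\mathbf H$ is half the trace, and $K$ is sectional curvature with the paper's curvature sign. Both are consistent with Lemma~\ref{lem:gauss} when $H=0$, which serves as the sanity check.
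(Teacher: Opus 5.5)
Your proposal is correct and follows essentially the same route as the paper: the traced Gauss equation gives $2K=2+4H^2-|A|^2$, and then $|A|^2=|A^\circ|^2+2H^2$. You derive the traced Gauss equation from its frame components and expand $|A-g\otimes\mathbf H|^2$ directly, whereas the paper cites the traced form and phrases the second step as orthogonality of $A^\circ$ and $g\otimes\mathbf H$; these are the same computation.
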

\begin{proof}
	The traced Gauss equation for a surface in a unit sphere is
	\begin{equation*}
		2K=2+|\operatorname{tr}_gA|^2-|A|^2=2+4H^2-|A|^2.
	\end{equation*}
	Since $A=A^\circ+g\otimes\mathbf H$, the two summands are orthogonal: the first is trace-free, while $\langle A^\circ,g\otimes\mathbf H\rangle=\langle\operatorname{tr}_gA^\circ,\mathbf H\rangle=0$. Moreover $|g\otimes\mathbf H|^2=2H^2$. This proves \eqref{eq:pmc-gauss}.
\end{proof}

\begin{lemma}\label{lem:pmc-small-sphere}
	Let $x\colon M^2\to\Sph^N(1)$ have nonzero parallel mean curvature vector, and let $A_{\mathbf H}$ be the shape operator defined by $\langle A_{\mathbf H}X,Y\rangle=\langle A(X,Y),\mathbf H\rangle$. Assume that
	\begin{equation*}
		A_{\mathbf H}=H^2\Id.
	\end{equation*}
	This condition holds, in particular, when $M$ is minimal in a totally umbilical hypersurface of $\Sph^N(1)$. Then the $\R^{N+1}$-valued field
	\begin{equation}\label{eq:pmc-center}
		a=\frac{H^2x+\mathbf H}{\kappa}
	\end{equation}
	is constant and satisfies
	\begin{equation}\label{eq:pmc-center-norms}
		|a|^2=\frac{H^2}{\kappa},\quad|x-a|^2=\frac1\kappa,\quad\langle a,x-a\rangle=0.
	\end{equation}
	Consequently,
	\begin{equation*}
		y=\sqrt\kappa(x-a)\colon(M,\bar g=\kappa g)\to\Sph^N\cap a^\perp\cong\Sph^{N-1}
	\end{equation*}
	is a minimal immersion, and
	\begin{equation}\label{eq:pmc-normalized-curvature}
		K_{\bar g}=\frac{K_g}{\kappa}.
	\end{equation}
\end{lemma}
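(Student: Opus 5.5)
The proof is a direct computation in the ambient Euclidean space $\R^{N+1}$. Let $D$ be the flat connection. For a tangent vector $X$ we use the standard decomposition of derivatives of the position vector and of normal fields of $x\colon M\to\Sph^N(1)\subset\R^{N+1}$:
\begin{equation*}
	D_Xx=X,\qquad D_X\mathbf H=-A_{\mathbf H}X+\nabla^\perp_X\mathbf H,
\end{equation*}
where the normal component is taken in $\Sph^N$. There is no component along $x$, because $\langle\mathbf H,x\rangle=0$ gives $\langle D_X\mathbf H,x\rangle=-\langle\mathbf H,X\rangle=0$. Since $\nabla^\perp\mathbf H=0$ and $A_{\mathbf H}=H^2\Id$, we get $D_X\mathbf H=-H^2X$. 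Moreover $H$ is constant because $\mathbf H$ is parallel. Therefore
\begin{equation*}
	D_X(H^2x+\mathbf H)=H^2X-H^2X=0,
\end{equation*}
so $a$ is constant, because $M$ is connected and $\kappa=1+H^2$ is constant.

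For the side remark, suppose $M$ is minimal in a totally umbilical hypersurface $\Sph^{N-1}(r)=\Sph^N\cap\{\langle y,e\rangle=c\}$. Then the mean curvature vector of $M$ in $\Sph^N$ is the umbilic normal $\mathbf H=h\nu$ of that hypersurface. Hence $A_{\mathbf H}=h\langle A(\cdot,\cdot),\nu\rangle=h\cdot h\,\Id=H^2\Id$. I will note this briefly; minimality inside the hypersurface kills the tangential-to-hypersurface part of $\operatorname{tr}A$.

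Next I derive the norm identities \eqref{eq:pmc-center-norms} using $|x|=1$, $\langle x,\mathbf H\rangle=0$ and $|\mathbf H|=H$:
\begin{equation*}
	|a|^2=\frac{H^4+H^2}{\kappa^2}=\frac{H^2}{\kappa},\qquad \langle a,x\rangle=\frac{H^2}{\kappa}.
\end{equation*}
Hence $\langle a,x-a\rangle=0$ and $|x-a|^2=1-2H^2/\kappa+H^2/\kappa=1/\kappa$.

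It follows that $y=\sqrt\kappa(x-a)$ satisfies $|y|=1$ and $\langle y,a\rangle=0$, so $y$ maps into $\Sph^N\cap a^\perp$ (here $a\neq0$). Since $dy=\sqrt\kappa\,dx$, the induced metric is $\kappa g=\bar g$. For minimality I use Takahashi's Lemma \ref{lem:Takahashi} in the target $\Sph^{N-1}\subset a^\perp$. The Laplacian satisfies $\Delta_{\bar g}=\kappa^{-1}\Delta_g$, and for any immersion into the unit sphere $\Delta_gx=2\mathbf H-2x$. Thus
\begin{equation*}
	\Delta_{\bar g}y=\kappa^{-1/2}(2\mathbf H-2x).
\end{equation*}
Now $\mathbf H=\kappa a-H^2x$, so
\begin{equation*}
	2\mathbf H-2x=2\kappa a-2\kappa x=-2\kappa(x-a),
\end{equation*}
which gives $\Delta_{\bar g}y=-2y$. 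By Lemma \ref{lem:Takahashi} (the equation $-\Delta y=2y$ for a map into the unit sphere of $a^\perp$), $y$ is minimal. Finally, \eqref{eq:pmc-normalized-curvature} follows from Lemma \ref{lem:scaling} with $c=\kappa$.

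The main point requiring care is the decomposition $D_X\mathbf H=-H^2X$, namely checking that there is no $x$-component and that the sign conventions for the shape operator match. Everything else is routine algebra.
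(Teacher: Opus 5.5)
Your proposal is correct and follows essentially the same route as the paper. Both arguments use the Weingarten formula together with $\langle x,\mathbf H\rangle=0$ to get $D_X\mathbf H=-H^2X$ and hence a constant $a$, compute the norm identities directly, obtain minimality from $\Delta_g x=2\mathbf H-2x$ and Takahashi's lemma (yielding $\Delta_{\bar g}y=-2y$), and get the curvature identity from scaling. The side remark about umbilical hypersurfaces is handled the same way in both.
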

\begin{proof}
	We first verify the final assertion in the hypothesis. Suppose that $M$ is minimal in a totally umbilical hypersurface $Q\subset\Sph^N$. If $A^{M/Q}$ is the second fundamental form in $Q$ and $B^Q(U,V)=\mu\langle U,V\rangle\xi$ is the second fundamental form of $Q$ in $\Sph^N$, then the composition formula gives
	\begin{equation*}
		A=A^{M/Q}+\mu g\otimes\xi,\quad\mathbf H=\mu\xi.
	\end{equation*}
	Since $A^{M/Q}$ is trace free and orthogonal to $\xi$, it follows that $A_{\mathbf H}=\mu^2\Id=H^2\Id$. We now use only the assumptions stated in the lemma. Since $\nabla^\perp\mathbf H=0$, $X(H^2)=2\langle\nabla_X^\perp\mathbf H,\mathbf H\rangle=0$; hence $H$ and $\kappa$ are constant.  With $D$ denoting the Euclidean connection, the Weingarten formula and $\langle X,\mathbf H\rangle=0$ give
	\begin{equation*}
		D_X\mathbf H=-A_{\mathbf H}X+\nabla_X^\perp\mathbf H=-H^2X.
	\end{equation*}
	Differentiating \eqref{eq:pmc-center} now gives $D_Xa=0$. Because $\langle x,\mathbf H\rangle=0$ and $|x|=1$, one has
	\begin{equation*}
		x-a=\frac{x-\mathbf H}{\kappa},
	\end{equation*}
	and the identities in \eqref{eq:pmc-center-norms} follow. Thus $y$ takes values in the stated unit sphere and $y^*g_{\Sph^{N-1}}=\kappa g$. To prove minimality, recall that the Euclidean vector Laplacian of an arbitrary surface in the unit sphere satisfies
	\begin{equation*}
		\Delta_gx=2\mathbf H-2x.
	\end{equation*}
	Since $a$ is constant, we obtain
	\begin{equation*}
		\Delta_g y=\sqrt\kappa\,\Delta_g(x-a)=\sqrt\kappa(2\mathbf H-2x)=-2\kappa y,\quad \Delta_{\bar g}y=\kappa^{-1}\Delta_gy=-2y.
	\end{equation*}
	Lemma \ref{lem:Takahashi}, equivalently the vector-valued composition formula, shows that $y$ is minimal in the unit sphere. Finally, Lemma \ref{lem:scaling} gives \eqref{eq:pmc-normalized-curvature}.
\end{proof}

We also need Yau's classification theorem and the genus-zero Hopf theorem.
\begin{theorem}[Yau's classification theorem \sourcecite{YauPMC}{Theorem~4}]\label{thm:Yau-PMC-classification}
	If $M^2$ is a surface with nonzero parallel mean curvature vector in a real space form $\mathcal{N}$, then either $M^2$ is minimal in a totally umbilical hypersurface of $\mathcal{N}$ or $M^2$ lies in a three-dimensional totally umbilical submanifold of $\mathcal{N}$ and has constant mean curvature therein.
\end{theorem}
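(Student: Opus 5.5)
The plan is to follow Yau's original strategy. We work with the unit parallel normal $\xi=\mathbf H/H$ and with holomorphic quadratic differentials built from $A$. The aim is to split into the case where $M$ is umbilic in the direction $\xi$ and the case where it is not.

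\textbf{Step 1 (basic identities).} Since $\nabla^\perp\mathbf H=0$ and $\mathbf H\ne0$, $H$ is constant and $\xi$ is a parallel unit normal field.
\begin{itemize}
\item The Ricci equation gives $0=\langle R^\perp(X,Y)\xi,\eta\rangle=\langle[A_\xi,A_\eta]X,Y\rangle$. Hence $A_\xi$ commutes with every $A_\eta$.
\item For $\eta\perp\xi$ the shape operator $A_\eta$ is trace free, because $\mathbf H$ points along $\xi$.
\item In a local complex coordinate $z$, set $\phi_\nu=\langle A(\partial_z,\partial_z),\nu\rangle\,dz^2$ for a normal field $\nu$. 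The Codazzi equation in a space form, together with $\nabla^\perp\xi=0$ and $H$ constant, shows that $\phi_\xi$ is a holomorphic quadratic differential.
\end{itemize}
Consequently either $\phi_\xi\equiv0$, or its zeros are isolated.

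\textbf{Step 2 (umbilic case).} Suppose $\phi_\xi\equiv0$. Then $A_\xi=H\Id$, that is, $A_{\mathbf H}=H^2\Id$. After replacing $\Sph^N$ and the vector $\kappa^{-1}$ by the analogous data of the space form $\mathcal N$ (only the centre formula changes), the argument of Lemma~\ref{lem:pmc-small-sphere} shows that the centre $a$ is constant. Hence $M$ lies in the totally umbilical hypersurface $\{|x-a|^2=\kappa^{-1}\}\cap\mathcal N$. The composition formula then gives that the second fundamental form of $M$ in this hypersurface is $A-H\,g\otimes\xi$. This form is trace free, so $M$ is minimal there.

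\textbf{Step 3 (non-umbilic case).} Suppose $\phi_\xi\not\equiv0$, and work on the dense open set $U$ where $A_\xi$ has two distinct eigenvalues.
\begin{itemize}
\item All $A_\eta$ commute with $A_\xi$, so they are simultaneously diagonal in its eigenframe. Each trace free $A_\eta$ with $\eta\perp\xi$ is therefore a multiple of $\mathrm{diag}(1,-1)$.
\item It follows that the first normal space $N_1=\operatorname{span}\{A(X,Y)\}$ has rank at most $2$ and contains $\xi$.
\item Using Codazzi, I would show that $N_1$ is $\nabla^\perp$-parallel on $U$, and then extend this by continuity and analyticity to all of $M$.
\item Erbacher's codimension reduction then places $M$ in a totally geodesic $4$-dimensional $\mathcal N^4\subset\mathcal N$ (or in a $3$-dimensional one, which already finishes the proof).
\end{itemize}

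\textbf{Step 4 (a second parallel normal).} Inside $\mathcal N^4$ the normal bundle has rank $2$ and $\xi$ is parallel, so its orthogonal complement $\eta$ is also parallel. The Codazzi argument of Step 1 then makes $\phi_\eta$ holomorphic as well. By Step 3 the trace free parts of $A_\xi$ and $A_\eta$ are proportional, so $\phi_\eta=c\,\phi_\xi$ for a real function $c$. A meromorphic function that is real-valued must be constant, so $c$ is constant. Then $\zeta=(\eta-c\xi)/|\eta-c\xi|$ is a parallel normal field with $A_\zeta$ a constant multiple of $\Id$. Repeating the centre argument of Step 2 with $\zeta$ in place of $\xi$ puts $M$ inside a totally umbilical hypersurface $\mathcal N^3$ of $\mathcal N^4$. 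This $\mathcal N^3$ is a $3$-dimensional totally umbilical submanifold of $\mathcal N$. The mean curvature vector of $M$ in $\mathcal N^3$ is the component of $\mathbf H$ orthogonal to $\zeta$; it is parallel, so its length is constant and $M$ has constant mean curvature in $\mathcal N^3$.

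\textbf{Main obstacle.} I expect the main difficulty in Step 3: proving that $N_1$ is parallel, and passing from $U$ to all of $M$. The holomorphicity of $\phi_\xi$, which makes its zero set discrete, is what I would rely on for the passage from $U$ to $M$. For the parallelism itself, I would first try differentiating the relation $A_\eta=f_\eta\,\mathrm{diag}(1,-1)$ and inserting it into the Codazzi equations.
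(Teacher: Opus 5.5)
The paper does not prove this statement; it is imported from Yau \cite[Theorem~4]{YauPMC}, so your proposal has to stand on its own. Steps~1, 2 and~4 are sound in outline. One caveat on Step~2: outside the spherical case the umbilical hypersurface is a level set $\{\langle x,a\rangle=\mathrm{const}\}$ of the model, not $\{|x-a|^2=\kappa^{-1}\}$. In Euclidean space it is a sphere of radius $1/H$, and in hyperbolic space it may be a horosphere or an equidistant hypersurface. The conclusion is unaffected.

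The genuine gap is in Step~3, exactly where you flag it. On $U$ write $A^\circ(E_1,E_1)=-A^\circ(E_2,E_2)=\mu\xi+V$, with $\mu=(\lambda_1-\lambda_2)/2\neq0$ and $V\perp\xi$. The Codazzi computation you propose, run with a unit normal $e_4=V/|V|$ spanning the $\xi^\perp$-part of $N_1$, only shows that $\operatorname{span}\{\xi,e_4\}$ is parallel on the open set $U\cap\{V\neq0\}$. On the interior of $\{V=0\}$ the first normal space is just $\R\xi$. Nothing in your argument controls where $V$ vanishes inside $U$: holomorphicity of $\phi_\xi$ only makes $M\setminus U$ discrete. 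So, a priori, $N_1$ can jump rank inside $U$, and $U\cap\{V\ne0\}$ may be disconnected. The local codimension reductions, possibly to different $\mathcal N^4$'s and $\mathcal N^3$'s, then cannot be glued. ``Continuity and analyticity'' would require real analyticity of the PMC immersion itself, which you neither invoke nor prove.

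The missing idea is to normalize by $\mu$ rather than by $|V|$. Put $W=V/\mu$. It is smooth on all of $U$, since swapping $E_1,E_2$ changes the signs of both $\mu$ and $V$, and on $U$ we have $A^\circ=b\otimes(\xi+W)$ with $b=\langle A^\circ(\cdot,\cdot),\xi\rangle$.

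Since $\nabla^\perp\mathbf H=0$, the tensor $\nabla A^\circ=\nabla A$ is symmetric by Codazzi. Its $\xi$-component says that $b$ is a Codazzi tensor. Its $\xi^\perp$-component then reduces to $b(Y,Z)\nabla^\perp_XW=b(X,Z)\nabla^\perp_YW$. Taking $(X,Y,Z)=(E_2,E_1,E_1)$ and $(E_1,E_2,E_2)$ with $b=\mu\,\mathrm{diag}(1,-1)$ gives $\nabla^\perp W=0$ on all of $U$, including where $W=0$. Equivalently, $W=\Phi^\perp/\phi_\xi$, where $\Phi^\perp$ is the $\xi^\perp$-part of $A(\partial_z,\partial_z)$; it satisfies $\nabla^\perp_{\bar z}\Phi^\perp=0$, and a real section annihilated by $\nabla^\perp_{\bar z}$ is parallel.

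Hence $|W|$ is constant on the connected set $U$, and $N_1=\operatorname{span}\{\xi,W\}$ is parallel there with constant rank $1$ or $2$. Erbacher's theorem on $U$, together with density of $U$ and closedness of totally geodesic submanifolds, places all of $M$ in some $\mathcal N^3$ or $\mathcal N^4$.

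This also makes Step~4 immediate: your constant is $c=|W|$, and $\zeta\propto W-|W|^2\xi$ is a parallel normal with $A^\circ_\zeta=0$. Finally, in the paper's only use of this theorem, on $\widehat M\cong\Sph^2$, $\phi_\xi$ is a holomorphic section of $\mathcal K_{\widehat M}^{\otimes2}\simeq\mathcal O(-4)$ and therefore vanishes. So your Steps~1--2 alone already give the first alternative in that setting.
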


We also mention that B. Y. Chen independently obtained important classification results for surfaces with parallel mean curvature vector \cite{Chen73a,Chen73b,Chen10}.

\begin{lemma}[Hopf's genus-zero CMC theorem]\label{lem:pmc-Hopf}
	A constant-mean-curvature immersion of an oriented two-sphere into a three-dimensional real space form is totally umbilical.
\end{lemma}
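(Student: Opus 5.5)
The plan is the classical Hopf argument: show that the $(2,0)$-part of the second fundamental form is a holomorphic quadratic differential, then use that genus zero admits no nonzero holomorphic quadratic differentials.

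Let $x\colon S^2\to\mathcal N^3(c)$ be a constant-mean-curvature immersion into a three-dimensional real space form, with induced metric $g$. Since $S^2$ is oriented and the ambient space is oriented (we may pass to an oriented local model; a space form is locally orientable and the unit normal exists because $S^2$ is simply connected), there is a global unit normal $\nu$ and a scalar second fundamental form $h=\langle A,\nu\rangle$. Equip $S^2$ with the conformal structure of $g$, as in Section \ref{sec:prelim}; by uniformization \cite{JostRiemann} it is biholomorphic to $\CP^1$. In a local isothermal coordinate $z$ with $g=e^{2\varphi}|dz|^2$, define the Hopf differential
\begin{equation*}
	\Phi=h(\partial_z,\partial_z)\,dz^2 .
\end{equation*}
A change of holomorphic coordinate multiplies $h(\partial_z,\partial_z)$ by $(dw/dz)^2$, so $\Phi$ is a global smooth section of $\mathcal K_\Sigma^{2}$. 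Moreover $|h(\partial_z,\partial_z)|^2=\tfrac14 e^{4\varphi}\,|h^\circ|_g^2/2$ up to a universal positive constant, so $\Phi\equiv0$ exactly when the traceless part $h^\circ$ vanishes, i.e.\ when $x$ is totally umbilical.

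The main step is to show that $\Phi$ is holomorphic. Write $h_{zz}=h(\partial_z,\partial_z)$ and $h_{z\bar z}=h(\partial_z,\partial_{\bar z})=\tfrac12 e^{2\varphi}H$. Because $\mathcal N$ has constant curvature, the ambient curvature term in the Codazzi equation vanishes, so $(\nabla_{\partial_{\bar z}}h)(\partial_z,\partial_z)=(\nabla_{\partial_z}h)(\partial_{\bar z},\partial_z)$. For the Levi-Civita connection of $e^{2\varphi}|dz|^2$ the only nonzero Christoffel symbols are $\Gamma^z_{zz}=2\varphi_z$ and its conjugate; hence the left side equals $\partial_{\bar z}h_{zz}$, while the right side equals $\partial_z h_{z\bar z}-2\varphi_z h_{z\bar z}=\tfrac12 e^{2\varphi}H_z$. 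With $H$ constant this gives $\partial_{\bar z}h_{zz}=0$. I expect this Christoffel-symbol bookkeeping to be the only step needing care; the point to check is that the $\varphi_z$ terms cancel exactly. Equivalently, in the language of Section \ref{sec:bundle}, it says $\bar\partial\Phi=0$ for the Dolbeault operator of $\mathcal K_\Sigma^{2}$.

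Thus $\Phi\in H^0(\CP^1,\mathcal K_{\CP^1}^{2})=H^0(\CP^1,\mathcal O(-4))$, using $\mathcal K_{\CP^1}\simeq\mathcal O(-2)$. As recorded in the proof of Lemma \ref{lem:CP1-cohomology}, a negative-degree line bundle on $\CP^1$ has no nonzero holomorphic section. Concretely, $\Phi=P(z)\,dz^2$ on $U_0$ becomes $P(1/w)w^{-4}\,dw^2$ on $U_\infty$, and this is holomorphic at $w=0$ only when $P\equiv0$. Hence $\Phi\equiv0$, so $h^\circ\equiv0$ and the immersion is totally umbilical.
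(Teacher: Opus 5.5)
Your proposal is correct and follows the paper's proof: you define the Hopf differential $h(\partial_z,\partial_z)\,dz^2$, use Codazzi with constant $H$ to get $\bar\partial\Phi=0$, and conclude $\Phi=0$ (hence $h^\circ=0$) because $\mathcal K^{2}\simeq\mathcal O(-4)$ on $\CP^1$ has no nonzero holomorphic sections. Your explicit Christoffel-symbol computation $\partial_{\bar z}h_{zz}=\tfrac12 e^{2\varphi}H_z$ and the Liouville-type check at $w=0$ are correct, and they fill in details the paper states in one line.
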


\begin{proof}
	Let $A^{M/Q}$ denote the second fundamental form of the immersion $M\to Q^3$. Choose a global unit normal $\nu$ and write
	\begin{equation*}
		A^{M/Q}(X,Y)=h(X,Y)\nu,
	\end{equation*}
	where $h$ is the scalar second fundamental form. Set
	\begin{equation*}
		H_Q=\frac12\operatorname{tr}_g h,\quad h^\circ=h-H_Qg.
	\end{equation*}
	In a local conformal coordinate $z$, the Hopf quadratic differential is
	\begin{equation*}
		\Phi=h(\partial_z,\partial_z)dz^2=h^\circ(\partial_z,\partial_z)dz^2,
	\end{equation*}
	where the second equality follows from $g(\partial_z,\partial_z)=0$. The Codazzi equation in a space form gives $\bar\partial\Phi=0$ when $H_Q$ is constant. Hence $\Phi$ is a holomorphic section of $\mathcal K_M^{\otimes2}$. Since $M\cong\Sph^2$, this line bundle has degree $-4$ and therefore has no nonzero holomorphic section; equivalently, this is the $H^0(\CP^1,\mathcal O(-4))=0$ case of Lemma \ref{lem:CP1-cohomology}. Thus $\Phi=0$. A real symmetric trace-free two-tensor on a surface is completely determined by its $(2,0)$ part, since its $(0,2)$ part is the complex conjugate and its $(1,1)$ part is its trace. Therefore $\Phi=0$ implies $h^\circ=0$, or equivalently,
	\begin{equation*}
		\left(A^{M/Q}\right)^\circ=0.
	\end{equation*}
	Hence the immersion is totally umbilical in $Q^3$.
\end{proof}

Now we can give the proof of Theorem \ref{thm:pmc-gaps}.

\begin{proof}[\textbf{Proof of Theorem \ref{thm:pmc-gaps}}]
	Since $\nabla^\perp\mathbf H=0$, the functions $H$ and $\kappa$ are constant. If $H=0$, then $\kappa=1$, and the conclusion follows directly from Theorem \ref{thm:main}. Hence we assume from now on that $H>0$. The lower bound in \eqref{eq:pmc-K-pinching} is positive. Let $\pi\colon\widehat M\to M$ be the oriented double cover, write $\widehat g=\pi^*g$, and lift the immersion and its normal data. Then $\widehat M\cong\Sph^2$. We apply Yau's classification theorem to the lifted immersion. In Yau's first alternative, $\widehat M$ is minimal in a totally umbilical hypersphere. Lemma \ref{lem:pmc-small-sphere} gives a closed minimal immersion
	\begin{equation*}
		y\colon(\widehat M,\bar g=\kappa\widehat g)\to\Sph^{N-1}.
	\end{equation*}
	Its Gaussian curvature satisfies
	\begin{equation*}
		\frac{2}{(s+1)(s+2)}\leq K_{\bar g}=\frac{K_{\widehat g}}{\kappa}\leq\frac{2}{s(s+1)}.
	\end{equation*}
	Applying Theorem \ref{thm:main} to $y$ gives
	\begin{equation*}
		K_{\widehat g}\equiv\frac{2\kappa}{(s+1)(s+2)}\quad\text{or}\quad K_{\widehat g}\equiv\frac{2\kappa}{s(s+1)}.
	\end{equation*}
	In Yau's second alternative, $\widehat M$ is a surface with constant mean curvature vector in a three-dimensional totally umbilical submanifold $Q^3$. Write
	\begin{equation*}
		B^Q(U,V)=\widehat g(U,V)\zeta.
	\end{equation*}
	The Codazzi equation gives $\nabla^\perp\zeta=0$, and the Gauss equation gives
	\begin{equation*}
		\operatorname{sec}_Q=1+|\zeta|^2.
	\end{equation*}
	Thus $Q^3$ is a three-dimensional space form. Since $\widehat M\cong\Sph^2$, its normal line bundle in $Q^3$ is trivial, and Lemma \ref{lem:pmc-Hopf} gives
	\begin{equation*}
		\left(A^{\widehat M/Q}\right)^\circ=0.
	\end{equation*}
	Because $Q^3$ is totally umbilical, the composition formula $A=A^{\widehat M/Q}+B^Q|_{T\widehat M}$ gives, after taking trace-free parts,
	\begin{equation*}
		A^\circ=\left(A^{\widehat M/Q}\right)^\circ=0.
	\end{equation*}
	Lemma \ref{lem:pmc-gauss} therefore yields $K_{\widehat g}=\kappa$. For $s=1$, this is exactly the upper endpoint $2\kappa/[s(s+1)]$. For $s\geq2$, it contradicts
	\begin{equation*}
		K_{\widehat g}\leq\frac{2\kappa}{s(s+1)}<\kappa.
	\end{equation*}
	Thus this alternative can occur only for $s=1$, and it also gives an endpoint value. In either alternative, $K_{\widehat g}$ is identically equal to one of the two endpoints. Since $K_{\widehat g}=K_g\circ\pi$ and $\pi$ is surjective, the same endpoint identity holds on $M$.
\end{proof}

\section{Proof of Theorem \ref{thm:higher-rotational-gap}}\label{sec:higher-rotational}

Throughout this section $n\geq3$. A smooth rotationally symmetric metric on $S^n$ will be written globally as
\begin{equation*}
	h=dt^2+f(t)^2g_{\Sph^{n-1}},\quad 0\leq t\leq a,
\end{equation*}
where $f>0$ on $(0,a)$ and the usual smooth pole compatibility conditions hold. In particular,
\begin{equation*}
	f(0)=f(a)=0,\quad f'(0)=1,\quad f'(a)=-1,
\end{equation*}
and the Taylor expansions at the two poles have the parity required for the warped product to extend smoothly across them. For $0<t<a$, a $2$-plane is called radial if it contains $\partial_t$, and tangential if it is contained in $T(\{t\}\times S^{n-1})$. By rotational symmetry, all radial planes at the same point have the same sectional curvature, and so do all tangential planes. We denote these two curvatures by $K_{\mathrm{rad}}$ and $K_{\mathrm{tan}}$, respectively. They are given by
\begin{equation*}
	K_{\mathrm{rad}}=-\frac{f''}{f},\quad K_{\mathrm{tan}}=\frac{1-(f')^2}{f^2}.
\end{equation*}
Both extend smoothly to the poles. We first establish the reverse of the rotational comparison theorem of Lin, Wang, and Xu \cite{LinWangXu}. Let
\begin{equation*}
	\mathscr H_{\rm rad}=L^2\left((0,a),f^{n-1}dt;\R\right).
\end{equation*}
For $m\geq0$, let $\mathcal D_m$ be the smooth radial core consisting of those functions $u\in C^\infty((0,a);\R)$ for which there exist smooth real-valued functions $\psi_0$ and $\psi_a$, defined in a neighborhood of $0$, such that
\begin{equation*}
	u(t)=t^m\psi_0(t^2)\text{ near }t=0,\quad u(t)=(a-t)^m\psi_a((a-t)^2)\text{ near }t=a.
\end{equation*}
On this core define
\begin{equation}\label{eq:radial-B}
	\begin{aligned}
		\ell_m[u]&=\int_0^a\left(|u'|^2+\frac{m(m+n-2)}{f^2}|u|^2\right)f^{n-1}dt,\\
		B_mu&=u'-m\frac{f'}f u,\\
		B_m^\dagger v&=-v'-(m+n-1)\frac{f'}fv.
	\end{aligned}
\end{equation}
Let $L_m$ be the self-adjoint operator associated with the closure of $\ell_m$, and let $B_m$ denote the graph closure of the first-order operator. Set
\begin{equation*}
	\alpha_m=m(m+n-1),
\end{equation*}
and define the closed forms
\begin{align*}
	a_m[u]&=\|B_mu\|^2+\alpha_m\|u\|^2,\quad\Dom(a_m)=\Dom(B_m),\\
	c_m[v]&=\|B_m^*v\|^2+\alpha_m\|v\|^2,\quad\Dom(c_m)=\Dom(B_m^*).
\end{align*}
Their associated operators are denoted by $A_m$ and $C_m$.
\begin{lemma}[\sourcecite{LinWangXu}{Definitions 3.4 and 3.5; Lemmas 4.6, 4.7, 5.2, and 5.4}]\label{lem:radial-defects}
	For every $m\geq0$, the relevant form domains agree:
	\begin{equation}\label{eq:radial-domains}
		\Dom(\ell_m)=\Dom(a_m),\quad\Dom(c_m)=\Dom(a_{m+1})=\Dom(\ell_{m+1}).
	\end{equation}
	The common domains embed compactly into $\mathscr H_{\rm rad}$, and
	\begin{align}
		c_m[v]-a_{m+1}[v]&= (2m+n)\int_0^a(K_{\rm rad}-1)|v|^2f^{n-1}dt,\label{eq:radial-partner-defect}\\
		\ell_m[u]-a_m[u]&=m\int_0^a\left((m+n-2)(K_{\rm tan}-1)+(K_{\rm rad}-1)\right)|u|^2f^{n-1}dt.\label{eq:radial-laplacian-defect}
	\end{align}
	Moreover,
	\begin{equation}\label{eq:radial-kernels}
		\ker B_m=\R f^m,\quad\ker B_m^*=\{0\}.
	\end{equation}
	Consequently, for $j\ge0$,
	\begin{equation}\label{eq:radial-partner-shift}
		\lambda_0(A_m)=\alpha_m,\quad\lambda_{j+1}(A_m)=\lambda_j(C_m).
	\end{equation}
\end{lemma}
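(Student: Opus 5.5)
We prove Lemma \ref{lem:radial-defects} by first establishing the two defect identities and the adjoint formula on the smooth radial cores, then extending everything to the closures by density. The kernel computation and the partner shift come last.

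\emph{Step 1: adjoint and defect identities on the cores.} For $u\in\mathcal D_m$ and $v\in\mathcal D_{m+1}$ we integrate by parts against the weight $f^{n-1}$. We have
\begin{equation*}
	\int_0^a(u'-m\tfrac{f'}fu)v f^{n-1}dt=\int_0^a u\bigl(-v'-(m+n-1)\tfrac{f'}fv\bigr)f^{n-1}dt .
\end{equation*}
The boundary terms $uvf^{n-1}$ vanish because $f\sim t$ near $0$ and $f\sim a-t$ near $a$, so the endpoints contribute factors of order $t^{2m+1+n-1}$. This confirms the formula for $B_m^\dagger$. Next we expand $|B_mu|^2=|u'|^2-m\frac{f'}f(|u|^2)'+m^2\frac{(f')^2}{f^2}|u|^2$. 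Integrating the middle term by parts using $(f^{n-2}f')'=f^{n-2}f''+(n-2)f^{n-3}(f')^2$ gives
\begin{equation*}
	\|B_mu\|^2=\int\Bigl(|u'|^2+\bigl[m\tfrac{f''}f+m(m+n-2)\tfrac{(f')^2}{f^2}\bigr]|u|^2\Bigr)f^{n-1}dt.
\end{equation*}
Substituting $f''/f=-K_{\rm rad}$ and $(f')^2/f^2=f^{-2}-K_{\rm tan}$, and comparing with $\ell_m$ and $\alpha_m=m(m+n-1)=m(m+n-2)+m$, yields \eqref{eq:radial-laplacian-defect}.

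For \eqref{eq:radial-partner-defect} we expand $\|B_m^\dagger v\|^2$ in the same way, with coefficient $k=m+n-1$ in place of $-m$. This produces $\int(|v'|^2+[-kf''/f+k(k-n+2)(f')^2/f^2]|v|^2)f^{n-1}$. Note that $k(k-n+2)=(m+n-1)(m+1)$, while for $a_{m+1}$ the corresponding coefficient is $(m+1)(m+1+n-2)$, which is the same. Hence the $(f')^2/f^2$ terms cancel in $c_m-a_{m+1}$. What remains is $(k+m+1)(-f''/f)=(2m+n)K_{\rm rad}$ plus $\alpha_m-\alpha_{m+1}=-(2m+n)$, which is \eqref{eq:radial-partner-defect}.

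\emph{Step 2: domains.} This is the main obstacle. The forms $\ell_m$, $a_m$ and $c_m$ differ on the cores only by bounded multiplications by smooth functions, since $K_{\rm rad}$ and $K_{\rm tan}$ are smooth up to the poles. Therefore their form norms are mutually equivalent on the cores, and the closures coincide, provided each closure is the closure of its core. To justify this, we identify $\mathcal D_m$ with the radial profiles of smooth functions $u(t)Y_m$ on $S^n$, where $Y_m$ is a degree-$m$ spherical harmonic. Under this identification $\ell_m[u]$ becomes the Dirichlet energy of $uY_m$, up to normalization. The closure of $\ell_m$ is then the corresponding isotypic piece of $W^{1,2}(S^n)$. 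Compactness of the embedding follows from Rellich compactness on $S^n$. The relation $\Dom(c_m)=\Dom(a_{m+1})$ holds because $B_m^\dagger$ maps $\mathcal D_{m+1}$ to $\mathcal D_m$ with norm equivalence, by the defect identity. Closedness of the operators $B_m$ and $B_m^*$ together with the density of the cores then gives the stated equalities.

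\emph{Step 3: kernels and shift.} The equation $B_mu=0$ gives $u=cf^m$, which lies in the domain. For $B_m^*v=0$ we get $v=cf^{-(m+n-1)}$, which is not square integrable against $f^{n-1}dt$ near $t=0$ since $f^{-2m-n+1}$ fails to be integrable, so $v=0$. Finally, Lemma \ref{lem:partner} applies verbatim (real Hilbert spaces suffice): the eigenvalue $\alpha_m$ of $A_m$ has multiplicity one, and $C_m$ has no eigenvalue $\alpha_m$, so the remaining spectra match. This gives \eqref{eq:radial-partner-shift}.
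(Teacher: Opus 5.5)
Your Step 1 is correct and is the same computation as the paper's. You expand the quadratic forms and integrate by parts, while the paper writes out $B_m^\dagger B_m$ and $B_mB_m^\dagger$; the coefficients $m(m+n-2)$, $(m+1)(m+n-1)$ and $2m+n$ agree. The identification $\Dom(\ell_m)=\Dom(a_m)$ (two closures of the same core $\mathcal D_m$ under equivalent norms), the Rellich argument via $u\mapsto uY_m$, the kernels, and the partner shift also match the paper.

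The gap is the identity $\Dom(c_m)=\Dom(a_{m+1})$. Here $\Dom(c_m)=\Dom(B_m^*)$, and $B_m^*$ is the adjoint of the closure of $B_m|_{\mathcal D_m}$. It is a maximal-type operator, defined only by testing against $\mathcal D_m$. The norm equivalence on $\mathcal D_{m+1}$, the inclusion $\mathcal D_{m+1}\subset\Dom(B_m^*)$ from your integration by parts, and closedness of $B_m^*$ give only $\Dom(a_{m+1})=\overline{\mathcal D_{m+1}}^{\,B_m^\dagger}\subseteq\Dom(B_m^*)$. The reverse inclusion says that $\mathcal D_{m+1}$ is a core for $B_m^*$. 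Closedness plus $L^2$-density do not imply this: compare $d/dt$ on $W^{1,2}(0,1)$ with the closure of its restriction to $C_c^\infty(0,1)$. Your remark that $B_m^\dagger$ maps $\mathcal D_{m+1}$ into $\mathcal D_m$ does not address it. This is exactly the ``pole-core theorem for the adjoint'' that the paper takes from \cite[Lemma 4.6]{LinWangXu}. Without it, \eqref{eq:radial-partner-defect} and the compact embedding hold only on the closure of $\mathcal D_{m+1}$, not on all of $\Dom(c_m)$. The common domain is what the later min-max comparison of $c_m$ with $a_{m+1}$ requires.

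The missing ingredient is an analysis at the poles, using the same non-integrability you invoke for $\ker B_m^*$.
\begin{itemize}
\item For $v\in\Dom(B_m^*)$ put $g=B_m^*v$ and $k=m+n-1$. Testing against $C_c^\infty(0,a)\subset\mathcal D_m$ gives $(f^kv)'=-f^kg$ on $(0,a)$, so near $t=0$ one has $v=Cf^{-k}-f^{-k}\int_0^tf^kg$.
\item A weighted Hardy inequality shows that the second term $w$ satisfies $\int_0(w/f)^2f^{n-1}dt\lesssim\int_0 g^2f^{n-1}dt$. Hence $v\in\mathscr H_{\rm rad}$ forces $C=0$, and then $v/f$ is square-integrable against $f^{n-1}dt$ near each pole.
\item Take cutoffs $\eta_\epsilon=\eta(t/\epsilon)$ vanishing near the poles. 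Then $\|\eta_\epsilon'v\|^2\lesssim\int_\epsilon^{2\epsilon}(v/f)^2f^{n-1}dt\to0$, so $\eta_\epsilon v\to v$ in the $B_m^\dagger$ graph norm.
\item The functions $\eta_\epsilon v$ are compactly supported and $W^{1,2}$ in the interior, so mollifying them gives elements of $C_c^\infty(0,a)\subset\mathcal D_{m+1}$.
\end{itemize}
Hence $\mathcal D_{m+1}$ is a core for $B_m^*$, and your norm-equivalence argument then completes \eqref{eq:radial-domains}.
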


\begin{proof}
	This is the radial construction of Lin, Wang, and Xu \cite[Sections 3-5]{LinWangXu}, with their dimensional parameter replaced by $n+1$. We spell out the points whose validity is independent of the sign of the curvature defect. Put $q=f'/f$. On the pole-compatible cores, the formal adjoint in \eqref{eq:radial-B} gives the identities
	\begin{align*}
		B_m^*B_m&=-\frac{d^2}{dt^2}-(n-1)q\frac d{dt}+m(m+n-2)q^2+m\frac{f''}{f},\\
		B_mB_m^*&=-\frac{d^2}{dt^2}-(n-1)q\frac d{dt}+(m+1)(m+n-1)q^2-(m+n-1)\frac{f''}{f}.
	\end{align*}
	Using $f''/f=-K_{\rm rad}$, $(1-(f')^2)/f^2=K_{\rm tan}$, and $\alpha_{m+1}-\alpha_m=2m+n$, these formulas give on $\mathscr D_{m+1}$ and $\mathscr D_m$, respectively,
	\begin{align*}
		c_m-a_{m+1}&=(2m+n)(K_{\rm rad}-1),\\
		\ell_m-a_m&=m\bigl((m+n-2)(K_{\rm tan}-1)+(K_{\rm rad}-1)\bigr),
	\end{align*}
	in the quadratic-form sense, which proves \eqref{eq:radial-partner-defect}-\eqref{eq:radial-laplacian-defect} on the cores. We next justify passage to the closed forms. The pole-core theorem for the adjoint and the graph-domain identification of Lin, Wang, and Xu \cite[Lemmas 4.6 and 4.7]{LinWangXu}, after the same dimensional substitution, give
	\begin{equation*}
		\Dom(B_m^*)=\overline{\mathscr D_{m+1}}^{\|\cdot\|_{B_m^*}},\quad\Dom(B_m)=\Dom(\ell_m).
	\end{equation*}
	The two displayed defect potentials are smooth and bounded up to the poles. Consequently the $c_m$- and $a_{m+1}$-form norms are equivalent on $\mathscr D_{m+1}$, while the $a_m$- and $\ell_m$-form norms are equivalent on $\mathscr D_m$. Taking the corresponding completions proves \eqref{eq:radial-domains} and extends both defect identities to their common closed domains. This is precisely the curvature-sign-free part of \cite[Lemma 5.2]{LinWangXu}; the sign of $K_{\rm rad}-1$ is used there only later, when the form order is imposed. The radial embedding into the appropriate spherical-harmonic summand of $H^1(\Sph^n,h)$ then gives the compact form-domain embeddings by Rellich's theorem \cite[Lemmas 4.3 and 5.2]{LinWangXu}. If $B_mu=0$, interior distributional regularity gives $(f^{-m}u)'=0$ on $(0,a)$, so $u=cf^m$. The pole expansions of $f$ show that $f^m\in\mathscr D_m\subset\Dom(B_m)$. If $B_m^*v=0$, the adjoint-core description above shows that the formal adjoint equation holds distributionally in the interior, and hence $$v=cf^{-(m+n-1)}.$$ Near a pole, its squared $\mathscr H_{\rm rad}$-density is comparable to $t^{-2m-n+1}\,dt$, which is not integrable for $n\geq3$. Thus $c=0$, and \eqref{eq:radial-kernels} follows. Finally, for any closed densely defined operator, the positive spectra of $B_m^*B_m$ and $B_mB_m^*$ agree with multiplicity, with the eigenspace isomorphism induced by $B_m$. Since $\ker B_m$ is one-dimensional and $\ker B_m^*=0$, the eigenvalue $\alpha_m$ occurs as one unpaired bottom eigenvalue of $A_m$. Removing it gives \eqref{eq:radial-partner-shift}.
\end{proof}

\begin{lemma}[\sourcecite{LinWangXu}{Lemma 6.3}]\label{lem:strict-radial-minmax}
	Let $s$ and $t$ be lower-bounded closed forms in $\mathscr H_{\rm rad}$ with a common form domain $V$ compactly embedded in $\mathscr H_{\rm rad}$, and let $S$ and $T$ be their associated self-adjoint operators. Suppose
	\begin{equation*}
		s[u]=t[u]+\int_0^a P|u|^2f^{n-1}dt,\quad u\in V,	
	\end{equation*}
	where $P\geq0$ is bounded and is positive on a nonempty open interval $U\subset(0,a)$. Assume that every eigenfunction of $S$ is smooth in the interior and satisfies a scalar second-order ordinary differential equation with smooth coefficients. Then
	\begin{equation*}
		\lambda_j(S)>\lambda_j(T),\quad j\geq0.
	\end{equation*}
\end{lemma}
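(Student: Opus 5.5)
The plan is to adapt the proof of Lemma \ref{lem:strict-minmax}\eqref{3.91} to the radial setting. I would replace the Vandermonde and Aronszajn step by uniqueness for second-order ODEs on the interval.

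Fix $j\geq0$. Let $F_j^S\subset V$ be the sum of the eigenspaces of $S$ with eigenvalues at most $\lambda_j(S)$. Since $V$ embeds compactly, $S$ has compact resolvent, so $F_j^S$ is finite-dimensional with $\dim F_j^S\geq j+1$. The key claim is
\begin{equation*}
	\int_0^aP|u|^2f^{n-1}dt>0\qquad(0\neq u\in F_j^S).
\end{equation*}
Suppose the integral vanished for some $u\neq0$. Since $P>0$ on $U$, $f>0$ on $(0,a)$, and $u$ is smooth in the interior, $u$ would vanish identically on $U$. Write $u=u_1+\cdots+u_r$ with $Su_b=\mu_bu_b$ and the $\mu_b$ pairwise distinct. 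Each $u_b$ satisfies a scalar second-order ODE with smooth coefficients on $(0,a)$.

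The main obstacle is transferring the vanishing from $u$ to each $u_b$. I would try two routes.
\begin{itemize}
\item[(a)] If the eigenfunction ODE has the form $(\mathcal S-\mu_b)u_b=0$ for a single differential expression $\mathcal S$ independent of $b$, as for $A_m$, $C_m$ and $L_m$ here, then applying $\mathcal S^k$ to $u$ on $U$ gives $\sum_b\mu_b^ku_b=0$ on $U$ for $k=0,\dots,r-1$. Invertibility of the Vandermonde matrix then yields $u_b\equiv0$ on $U$.
\item[(b)] Otherwise, I would use the hypothesis only for each individual eigenfunction. I would also restrict to a subinterval of $U$ where the leading coefficients are nonvanishing, which is possible because the ODEs are nondegenerate second-order equations.
\end{itemize}
Once $u_b$ and $u_b'$ vanish at some point of $U$, uniqueness for the linear ODE, which is regular on the connected interval $(0,a)$, gives $u_b\equiv0$ on $(0,a)$. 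Hence $u_b=0$ in $\mathscr H_{\rm rad}$, contradicting $u\neq0$.

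By compactness of the unit sphere of the finite-dimensional space $F_j^S$ and continuity of $u\mapsto\int_0^aP|u|^2f^{n-1}dt$ (here $P$ is bounded), there is $\delta_j>0$ with $\int_0^aP|u|^2f^{n-1}dt\geq\delta_j\|u\|^2$ on $F_j^S$. Now take any $(j+1)$-dimensional subspace $G\subset F_j^S\subset V$. On $G$ the Rayleigh quotient of $s$ is at most $\lambda_j(S)$, so
\begin{equation*}
	t[u]=s[u]-\int_0^aP|u|^2f^{n-1}dt\leq(\lambda_j(S)-\delta_j)\|u\|^2\qquad(u\in G).
\end{equation*}
The min-max formula \eqref{eq:minmax-prelim} for $t$ on the common domain $V$ (over real subspaces, as in Remark \ref{rem:complex-multiplicity}) then gives $\lambda_j(T)\leq\lambda_j(S)-\delta_j<\lambda_j(S)$.
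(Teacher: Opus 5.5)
Your route (a) is the paper's argument: you get positivity of the defect on the span of the first $j+1$ eigenfunctions of $S$ from the Vandermonde system for the fixed local differential expression of $S$ and ODE uniqueness on $(0,a)$, and then conclude by compactness of the unit sphere and min-max for $t$ on $V$; this covers $A_m$, $C_m$, $L_m$ as used in the paper. Route (b) never actually shows that the individual $u_b$ vanish on $U$ (with unrelated ODEs nothing forces this), so drop it; under the literal hypothesis you only need $\int_0^aP|b|^2f^{n-1}dt>0$ for $0\neq b\in\ker(S-\lambda_j(S))$, which is ODE uniqueness for a single eigenfunction, because on the eigenspaces below $\lambda_j(S)$ the $s$-Rayleigh quotient is already strictly below $\lambda_j(S)$.
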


\begin{proof}
	Let $F_j$ be the span of an orthonormal list of the first $j+1$ eigenfunctions of $S$, counted with multiplicity. If a nonzero $u\in F_j$ vanished on $U$, decompose it into components belonging to the distinct eigenvalues of $S$.  Applying powers of the local differential expression of $S$ on $U$ and using the resulting Vandermonde system shows that each eigencomponent vanishes on $U$. Ordinary differential equation uniqueness then forces every component to vanish identically, a contradiction. Hence
	\begin{equation*}
		\int_0^aP|u|^2f^{n-1}\,dt>0\quad(0\ne u\in F_j).
	\end{equation*}
	Compactness of the unit sphere of $F_j$ makes the defect uniformly positive there. Since $\dim F_j=j+1$ and the $S$-Rayleigh quotient is at most $\lambda_j(S)$ on $F_j$, the min-max principle for $T$ gives $\lambda_j(T)<\lambda_j(S)$. This is the radial version of Lin, Wang, and Xu \cite[Lemma 6.3]{LinWangXu}.
\end{proof}

\begin{theorem}\label{thm:reverse-rotational}
	Let $h$ be a smooth rotationally symmetric metric on $S^n$ and let $g_{\rd,n}$ be the unit round metric. If $\sec_h\leq1$, then
	\begin{equation}\label{eq:reverse-rotational-comparison}
		\lambda_i(h)\leq\lambda_i(g_{\rd,n}),\quad i\geq1.
	\end{equation}
	Moreover, if equality holds for some $i_0\ge1$, then $(S^n,h)$ is isometric to $\Sph^n$.
\end{theorem}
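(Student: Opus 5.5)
The argument runs the radial ladder of Lemma \ref{lem:radial-defects} with the opposite sign, in parallel with the proof of Theorem \ref{thm:comparison-expanded}\eqref{secondcom}. First we decompose $L^2(S^n,h)$ by spherical harmonics on the fibres $\{t\}\times S^{n-1}$. The degree-$m$ summand has multiplicity $d_m=\dim\mathcal H_m(\Sph^{n-1})$, and on it $-\Delta_h$ is unitarily equivalent to $L_m$ acting on $\mathscr H_{\rm rad}$. The spectrum of $-\Delta_h$ is therefore the union over $m$ of $\Spec(L_m)$, each eigenvalue repeated $d_m$ times. For the round metric, $f=\sin t$ and $K_{\rm rad}=K_{\rm tan}=1$, so both defect identities vanish. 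Then $\lambda_k(L_m^{\rd})=\lambda_k(A_m^{\rd})=\mu_{n,m+k}$, because iterating \eqref{eq:radial-partner-shift} gives the ladder $\alpha_m,\alpha_{m+1},\dots$. The round eigenvalue $\mu_{n,r}$ therefore collects the contributions $(m,k)$ with $m+k=r$, whose total multiplicity is $\sum_{m\le r}d_m$, as it should be.

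Next we compare $h$ with the round metric rung by rung. Since $\sec_h\le1$, we have $K_{\rm rad}\le1$ and $K_{\rm tan}\le1$. Then \eqref{eq:radial-laplacian-defect} gives $\ell_m\le a_m$, so $\lambda_k(L_m)\le\lambda_k(A_m)$. Also \eqref{eq:radial-partner-defect} gives $c_m\le a_{m+1}$, and combining this with \eqref{eq:radial-partner-shift} yields $\lambda_{j+1}(A_m)=\lambda_j(C_m)\le\lambda_j(A_{m+1})$. Iterating $k$ times gives
\begin{equation*}
	\lambda_k(L_m)\le\lambda_k(A_m)\le\lambda_{k-1}(A_{m+1})\le\cdots\le\lambda_0(A_{m+k})=\alpha_{m+k}=\mu_{n,m+k}.
\end{equation*}
Here the domain identities in \eqref{eq:radial-domains} justify using min-max with common form domains. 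The result is that each radial eigenvalue of $h$ lies at or below the round eigenvalue carrying the same label $(m,k)$.

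From this labelled comparison we pass to the ordered comparison by a counting argument. The map $(m,k,\text{harmonic index})$ is a bijection between the eigenvalue lists of $h$ and of the round metric, and it sends each eigenvalue of $h$ to a round eigenvalue that is at least as large. A standard rearrangement argument then gives $\lambda_i(h)\le\lambda_i(g_{\rd,n})$. Indeed, the $i+1$ smallest round labels correspond to $i+1$ eigenvalues of $h$, all of which are $\le\lambda_i(g_{\rd,n})$, so at least $i+1$ eigenvalues of $h$ lie at or below $\lambda_i(g_{\rd,n})$. The only care needed is with the multiplicities $d_m$, which are matched exactly.

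The rigidity statement is the step I expect to be the main obstacle. Suppose $h$ is not round. I claim that then some defect is nontrivial: if $K_{\rm rad}\equiv1$, then $f''=-f$ with $f(0)=0$ and $f'(0)=1$, which forces $f=\sin t$, and then $h$ is round. So $1-K_{\rm rad}\ge0$ is positive on a nonempty open interval. Lemma \ref{lem:strict-radial-minmax}, applied with $S=A_{m+1}$ and $T=C_m$ and using the ODE regularity of eigenfunctions, makes every partner rung strict. It follows that $\lambda_k(L_m)<\mu_{n,m+k}$ whenever $k\ge1$. The delicate labels are those with $k=0$, where the only available rung is $\ell_m\le a_m$, with defect $m\bigl((m+n-2)(K_{\rm tan}-1)+(K_{\rm rad}-1)\bigr)$. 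This defect is nonzero on an open set for every $m\ge1$, because $K_{\rm rad}\not\equiv1$ and both terms are $\le0$. Lemma \ref{lem:strict-radial-minmax} then gives $\lambda_0(L_m)<\alpha_m$ for $m\ge1$. The only label that is not strict is $(0,0)$, which carries the eigenvalue $0$ at index $0$. Consequently, for every $i\ge1$ the $i+1$ smallest labels yield at least $i+1$ eigenvalues of $h$ strictly below $\lambda_i(g_{\rd,n})$, apart from the single zero eigenvalue, and that zero lies strictly below every positive round value. This gives $\lambda_i(h)<\lambda_i(g_{\rd,n})$, so equality for some $i_0\ge1$ forces $h$ to be round.
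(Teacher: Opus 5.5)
Your proposal is correct and follows the paper's proof essentially step for step. It uses the same form orders $C_m\le A_{m+1}$ and $L_m\le A_m$, the same iterated bound $\lambda_k(L_m)\le\alpha_{m+k}$, the same rigidity trigger $K_{\rm rad}\not\equiv1$, and the same two applications of Lemma \ref{lem:strict-radial-minmax}, which make every label except $(0,0)$ strict. The only difference is the bookkeeping at the end. The paper counts the $J_{n,\ell}$ modes below $\mu_{n,\ell}$ cluster by cluster using the branching identity $d_{n,k}=\sum_{m\le k}d_{n-1,m}$. You instead compute the round radial spectra from the same ladder with vanishing defects ($f=\sin t$) and match labels by a rearrangement argument, so you do not need the branching identity as an input.
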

\begin{proof}
	The curvature assumption gives $K_{\rm rad}\leq1$ and $K_{\rm tan}\leq1$. Hence \eqref{eq:radial-partner-defect} and \eqref{eq:radial-laplacian-defect} imply the form orders
	\begin{equation*}
		C_m\leq A_{m+1},\quad L_m\leq A_m.
	\end{equation*}
	By min-max and \eqref{eq:radial-partner-shift}, $$\lambda_{j+1}(A_m)=\lambda_j(C_m)\leq\lambda_j(A_{m+1}).$$ Iterating this inequality until the bottom eigenvalue of $A_{m+j}$ and using $\lambda_0(A_q)=\alpha_q$ gives
	\begin{equation}\label{eq:radial-eigen-bound}
		\lambda_j(A_m)\leq\alpha_{m+j},\quad\lambda_j(L_m)\leq\alpha_{m+j}.
	\end{equation}
	Suppose now that $h$ is not round. If $K_{\rm rad}\equiv1$, then $f''+f=0$ with $f(0)=0$ and $f'(0)=1$, so $f(t)=\sin t$. Since $f>0$ on $(0,a)$ and vanishes at the second pole, necessarily $a=\pi$, and the metric is unit round, a contradiction. Thus there is a nonempty open interval $U$ on which $K_{\rm rad}<1$. Rewrite \eqref{eq:radial-partner-defect} as $$a_{m+1}[v]=c_m[v]+(2m+n)\int_0^a(1-K_{\rm rad})|v|^2f^{n-1}\,dt.$$ The operators $A_{m+1}$ are bounded-potential perturbations of the radial Sturm-Liouville operators $L_{m+1}$; hence their eigenfunctions are smooth in the interior and obey scalar second-order ODEs with smooth coefficients. Lemma \ref{lem:strict-radial-minmax}, with $S=A_{m+1}$ and $T=C_m$, therefore gives
	\begin{equation*}
		\lambda_j(C_m)<\lambda_j(A_{m+1}),\quad m,j\geq0.
	\end{equation*}
	Combining this with \eqref{eq:radial-partner-shift} and iterating yields
	\begin{equation*}
		\lambda_j(A_m)<\alpha_{m+j},\quad j\geq1.
	\end{equation*}
	For $m=0$, we have $L_0=A_0$. For $m\geq1$, the second defect identity can be written
	\begin{equation*}
		a_m[u]=\ell_m[u]+m\int_0^a\left((m+n-2)(1-K_{\mathrm tan})+(1-K_{\mathrm rad})\right)|u|^2f^{n-1}dt.
	\end{equation*}
	The potential is nonnegative everywhere and positive on $U$. Applying Lemma \ref{lem:strict-radial-minmax} with $S=A_m$ and $T=L_m$ gives $\lambda_j(L_m)<\lambda_j(A_m)$ for every $j\geq0$. Consequently,
	\begin{equation}\label{eq:strict-radial-L}
		\lambda_j(L_m)<(m+j)(m+j+n-1)\quad\text{whenever }(m,j)\ne(0,0).
	\end{equation}
	Let $d_{n-1,m}$ denote the real dimension of the degree-$m$ spherical harmonics on $\Sph^{n-1}$. The closed Dirichlet form decomposes into its angular spherical-harmonic summands, and hence
	\begin{equation*}
		-\Delta_h\simeq\bigoplus_{m=0}^{\infty}L_m^{\oplus d_{n-1,m}};
	\end{equation*}
	see Lin, Wang, and Xu \cite[Lemma 4.4]{LinWangXu}, again with their parameter $n$ replaced by $n+1$. Fix $\ell\geq1$. For every pair $0\leq m\leq\ell$, $0\leq j\leq\ell-m$ other than $(m,j)=(0,0)$, equation \eqref{eq:strict-radial-L} gives
	\begin{equation*}
		\lambda_j(L_m)<(m+j)(m+j+n-1)\leq\mu_{n,\ell}.
	\end{equation*}
	The exceptional pair is the constant zero mode of $L_0$, and $0<\mu_{n,\ell}$. Thus all these modes are strictly below $\mu_{n,\ell}$, with total multiplicity at least
	\begin{equation*}
		\sum_{m=0}^{\ell}d_{n-1,m}(\ell-m+1).
	\end{equation*}
	The spherical-harmonic branching identity
	\begin{equation*}
		d_{n,k}=\sum_{m=0}^k d_{n-1,m},
	\end{equation*}
	proved in Lin, Wang, and Xu \cite[Lemma 6.1]{LinWangXu} after the same dimensional substitution, gives
	\begin{equation*}
		\sum_{m=0}^{\ell}d_{n-1,m}(\ell-m+1)=\sum_{k=0}^{\ell}d_{n,k}\eqqcolon J_{n,\ell}.
	\end{equation*}
	Therefore
	\begin{equation*}
		\lambda_{J_{n,\ell}-1}(h)<\mu_{n,\ell}.
	\end{equation*}
	Every index in the degree-$\ell$ round cluster is at most $J_{n,\ell}-1$, so $\lambda_i(h)<\lambda_i(g_{\rd,n})$ throughout that cluster.  Since the positive round clusters exhaust all positive indices, the strict assertion follows. For the non-strict assertion, use \eqref{eq:radial-eigen-bound} instead of \eqref{eq:strict-radial-L}. It gives at least $J_{n,\ell}$ eigenvalues less than or equal to $\mu_{n,\ell}$, hence $\lambda_{J_{n,\ell}-1}(h)\leq\mu_{n,\ell}$ and the same clusterwise argument proves \eqref{eq:reverse-rotational-comparison}.
\end{proof}

For later indexing, let
\begin{equation*}
	d_{n,k}=\dim\mathcal H_k(\Sph^n),\quad J_{n,r}=\sum_{k=0}^rd_{n,k}.
\end{equation*}
Thus $J_{n,r}-1$ is the last zero-based position of the round degree-$r$ cluster and $J_{n,r}$ is the first position of the degree-$(r+1)$ cluster:
\begin{equation}\label{eq:higher-round-boundary}
	\lambda_{J_{n,r}-1}(g_{\rd,n})=\mu_{n,r},\quad\lambda_{J_{n,r}}(g_{\rd,n})=\mu_{n,r+1}.
\end{equation}

\begin{proof}[\textbf{Proof of Theorem \ref{thm:higher-rotational-gap}}]
	Write $\kappa_r=\kappa_{n,r}$ and $\kappa_{r+1}=\kappa_{n,r+1}$, and form the two rescaled metrics
	\begin{equation*}
		g_+=\kappa_r g,\quad g_-=\kappa_{r+1}g.
	\end{equation*}
	The sectional-curvature upper bound in \eqref{eq:higher-pinching} and Lemma \ref{lem:scaling} give
	\begin{equation*}
		\sec_{g_+}\leq1.
	\end{equation*}
	The Ricci lower bound in \eqref{eq:higher-pinching}, together with the Ricci scaling in Lemma \ref{lem:scaling}, gives
	\begin{equation*}
		\operatorname{Ric}_{g_-}=\operatorname{Ric}_g\geq(n-1)\kappa_{r+1}g=(n-1)g_-.
	\end{equation*}
	If $g_+$ is unit round, then $\sec_g\equiv\kappa_r$; if $g_-$ is unit round, then $\sec_g\equiv\kappa_{r+1}$. Assume for contradiction that neither metric is round. The strict reverse comparison proved in Theorem \ref{thm:reverse-rotational}, together with \eqref{eq:higher-round-boundary}, gives
	\begin{equation*}
		\frac{\lambda_{J_{n,r}-1}(g)}{\kappa_r}=\lambda_{J_{n,r}-1}(g_+)<\mu_{n,r}.
	\end{equation*}
	Since $\kappa_r\mu_{n,r}=n$,
	\begin{equation}\label{eq:higher-below-n}
		\lambda_{J_{n,r}-1}(g)<n.
	\end{equation}
	For the lower-side comparison we invoke Lin, Wang, and Xu \cite[Theorem 6.5]{LinWangXu}, with their dimension parameter replaced by $n+1$, which applies directly to the rotationally symmetric metric $g_-$ because $\operatorname{Ric}_{g_-}\geq(n-1)g_-$. It gives the ordered comparison with the unit round sphere, and its rigidity statement says that equality at any positive index forces $g_-$ to be unit round. Since $g_-$ is assumed nonround, the inequality at the positive index $J_{n,r}$ is strict:
	\begin{equation*}
		\frac{\lambda_{J_{n,r}}(g)}{\kappa_{r+1}}=\lambda_{J_{n,r}}(g_-)>\lambda_{J_{n,r}}(g_{\rd,n})=\mu_{n,r+1}.
	\end{equation*}
	Because $\kappa_{r+1}\mu_{n,r+1}=n$,
	\begin{equation}\label{eq:higher-above-n}
		\lambda_{J_{n,r}}(g)>n.
	\end{equation}
	Equations \eqref{eq:higher-below-n} and \eqref{eq:higher-above-n} place $n$ strictly between two consecutive ordered eigenvalues, so $$n\notin\Spec(-\Delta_g).$$ This contradicts Lemma \ref{lem:Takahashi}, since the immersion is minimal. Therefore one of $g_+$ and $g_-$ is round, proving \eqref{eq:higher-endpoints}.
\end{proof}

We conclude by formulating the following higher-dimensional gap conjecture.
\begin{conjecture}\label{conj:higher-gap-no-symmetry}
	Let $n\geq3$ and $r\geq1$ be integers, and let $x\colon (S^n,g)\to\Sph^N$ be a closed minimal immersion. Suppose that the sectional curvature $\sec_g$ of $S^n$ satisfies
	\begin{equation*}
		\kappa_{n,r+1}=\frac{n}{(r+1)(r+n)}\leq \sec_g\leq\frac{n}{r(r+n-1)}=\kappa_{n,r}.
	\end{equation*}
	Then
	\begin{equation*}
		\sec_g\equiv\kappa_{n,r}\quad\text{or}\quad\sec_g\equiv\kappa_{n,r+1}.
	\end{equation*}
\end{conjecture}

The first-gap case $r=1$ is already known in a stronger form: building on his earlier extrinsic characterization of real Veronese submanifolds \cite{Itoh1975}, T. Itoh \cite{Itoh1978} proved that the lower bound $\sec_g\geq\kappa_{n,2}=n/[2(n+1)]$ already forces $\sec_g\equiv\kappa_{n,1}=1$ or $\sec_g\equiv\kappa_{n,2}$.

\medskip
\noindent\textbf{Acknowledgements.}
J. Q. Ge is partially supported by NSFC (No. 12571049) and the Fundamental Research Funds for the Central Universities. F. G. Li is partially supported by NSFC (No. 12271040 and 12501061), the Guangdong Provincial Association for Science and Technology Youth Talent Support Program (No. SKXRC2026413), and the Research Start-up Funding of Beijing Institute of Technology (No. 5640011253301).


\begin{thebibliography}{99}

\bibitem{Aronszajn} N. Aronszajn, {\em A unique continuation theorem for solutions of elliptic partial differential equations or inequalities of second order}, J. Math. Pures Appl., {\bf 36} (1957), 235--249.

\bibitem{BKSS} K. Benko, M. Kothe, K. D. Semmler and U. Simon, {\em Eigenvalues of the Laplacian and curvature}, Colloq. Math., {\bf 42} (1979), 19--31.

\bibitem{Bolt88a} J. Bolton, G. R. Jensen, M. Rigoli and L. M. Woodward, {\em On conformal minimal immersions of $\mathbb{S}^2$ into $\CP^n$}, Math. Ann., {\bf 279} (1988), 599--620.

\bibitem{Bolt88b} J. Bolton and L. M. Woodward, {\em On the Simon conjecture for minimal immersions with $S^1$-symmetry}, Math. Z., {\bf 200} (1988), 111--121.

\bibitem{Calabi} E. Calabi, {\em Minimal immersions of surfaces in Euclidean spheres}, J. Differential Geom., {\bf 1} (1967), 111--125.

\bibitem{Chen73a} B. Y. Chen, {\em On the surface with parallel mean curvature vector}, Indiana Univ. Math. J., {\bf 22} (1973), 655--666.

\bibitem{Chen73b} B. Y. Chen, {\em Geometry of submanifolds}, Mercer Dekker, New York, 1973.

\bibitem{Chen10} B. Y. Chen, {\em Submanifolds with parallel mean curvature vector in Riemannian and indefinite space forms}, Arab J. Math. Sci., {\bf 16} (2010), 1--46.

\bibitem{DGL} W. R. Ding, J. Q. Ge and F. G. Li, {\em Pinching rigidity of minimal surfaces in spheres}, Sci. China Math., {\bf 68} (2025), 2189--2206.

\bibitem{DingGeLiThird} W. R. Ding, J. Q. Ge and F. G. Li, {\em On Simon's third gap conjecture for minimal surfaces in spheres}, arXiv:2603.03070.

\bibitem{DingGeLiPMC} W. R. Ding, J. Q. Ge and F. G. Li, {\em Pinching rigidity of surfaces with parallel mean curvature vector in spheres}, arXiv:2607.23428.

\bibitem{doCarmoWallach} M. P. do Carmo and N. R. Wallach, {\em Minimal immersions of spheres into spheres}, Ann. of Math. (2), {\bf 93} (1971), 43--62.

\bibitem{Forster} O. Forster, {\em Lectures on Riemann Surfaces}, Graduate Texts in Mathematics, vol. 81, Springer, New York, 1981.

\bibitem{GriffithsHarris} P. Griffiths and J. Harris, {\em Principles of Algebraic Geometry}, Wiley, New York, 1978.

\bibitem{GuXuXu} J. R. Gu, H. W. Xu, Z. Y. Xu, et al., {\em A survey on rigidity problems in geometry and topology of submanifolds}, In: Proceedings of the 6th International Congress of Chinese Mathematicians. Advanced Lectures in Mathematics, vol. 37. Beijing-Boston: Higher Education Press-International Press, (2016), 79--99.

\bibitem{Itoh1975} T. Itoh, {\em On Veronese manifolds}, J. Math. Soc. Japan, {\bf 27} (1975), 497--506.

\bibitem{Itoh1978} T. Itoh, {\em Addendum to my paper ``On Veronese manifolds''}, J. Math. Soc. Japan, {\bf 30} (1978), 73--74.

\bibitem{It88} T. Itoh, {\em A characterization of the generalized Veronese surfaces}, Proc. Amer. Math. Soc., {\bf 104} (1988), 571--576.

\bibitem{JostRiemann} J. Jost, {\em Compact Riemann Surfaces}, third ed., Springer, Berlin, 2006.

\bibitem{Kato} T. Kato, {\em Perturbation Theory for Linear Operators}, Springer, Berlin, 1995.

\bibitem{Simon} M. Kozlowski and U. Simon, {\em Minimal immersions of $2$-manifolds into spheres}, Math. Z., {\bf 186} (1984), 377--382.

\bibitem{LiSimon} H. Z. Li and U. Simon, {\em Quantization of curvature for compact surfaces in $S^n$}, Math. Z., {\bf 245} (2003), 201--216.

\bibitem{LinWangXu} S. J. Lin, H. B. Wang and G. Y. Xu, {\em Eigenvalues on spheres}, arXiv:2607.11544v1.

\bibitem{Simon1} U. Simon, {\em Eigenvalues of the Laplacian and minimal immersions into spheres}, in: Differential Geometry, Montreal: Pitman, Boston, MA, {\bf 131} (1985), 115--120.

\bibitem{Yau} M. Scherfner, S. Weiss and S. T. Yau, {\em A review of the Chern conjecture for isoparametric hypersurfaces in spheres}, in: Advances in Geometric Analysis, Advanced Lectures in Mathematics, vol. 21. Beijing-Boston: Higher Education Press-International Press, (2012), 175--187.

\bibitem{Takahashi} T. Takahashi, {\em Minimal immersions of Riemannian manifolds}, J. Math. Soc. Japan, {\bf 18} (1966), 380--385.

\bibitem{TaylorPDE} M. E. Taylor, {\em Partial Differential Equations II: Qualitative Studies of Linear Equations}, second ed., Applied Mathematical Sciences, vol. 116, Springer, New York, 2011.

\bibitem{Wells} R. O. Wells, Jr., {\em Differential Analysis on Complex Manifolds}, third ed., Graduate Texts in Mathematics, vol.65, Springer, New York, 2008.

\bibitem{YauPMC} S. T. Yau, {\em Submanifolds with constant mean curvature I}, Amer. J. Math., {\bf 96} (1974), 346--366.

\end{thebibliography}
\end{document}